\tikzset{cross/.style={cross out, draw, 
         minimum size=2*(#1-\pgflinewidth), 
         inner sep=0pt, outer sep=0pt}}
\newtheorem{theorem}{Theorem}
\newtheorem{proposition}{Proposition}
\newtheorem{lemma}{Lemma}
\newtheorem{corollary}{Corollary}
\theoremstyle{definition}
\newtheorem{definition}{Definition}
\newtheorem*{remark}{Remark}
\DeclareMathOperator{\covol}{covol}
\DeclareMathOperator{\Vol}{Vol}
\DeclareMathOperator{\End}{End}
\DeclareMathOperator{\rk}{rk}
\DeclareMathOperator{\Span}{Span}
\DeclareMathOperator{\Hom}{Hom}
\DeclareMathOperator{\Grass}{Grass}
\DeclareMathOperator{\Supp}{Supp}
\DeclareMathOperator{\Sym}{Sym}
\DeclareMathOperator{\diag}{diag}
\newcommand{\trace}{\mathrm{Tr}}
\newcommand{\GL}{\mathrm{GL}}
\newcommand{\SL}{\mathrm{SL}}
\newcommand{\Lie}{\mathrm{Lie}}
\newcommand{\C}{\mathbb{C}}
\newcommand{\R}{\mathbb{R}}
\newcommand{\Z}{\mathbb{Z}}
\newcommand{\Q}{\mathbb{Q}}
\newcommand{\QQ}{\overline{\mathbb{Q}}}
\newcommand{\N}{\mathbb{N}}
\newcommand{\g}{\mathfrak{g}}
\newcommand{\bw}{\mathbf{w}}
\newcommand{\bx}{\mathbf{x}}
\newcommand{\bv}{\mathbf{v}}
\newcommand{\bu}{\mathbf{u}}
\newcommand{\bp}{\mathbf{p}}
\newcommand{\bq}{\mathbf{q}}
\newcommand{\bc}{\mathbf{c}}
\newcommand{\bd}{\mathbf{d}}
\newcommand{\cO}{\mathcal{O}}
\newcommand{\cL}{\mathcal{L}}
\newcommand{\cF}{\mathcal{F}}
\newcommand{\cP}{\mathcal{P}}
\newcommand{\cB}{\mathcal{B}}
\newcommand{\sr}{\mathsf{r}}
\newcommand{\sx}{\mathsf{x}}
\newcommand{\bg}{\mathbf{g}}
\newcommand{\bh}{\mathbf{h}}
\newcommand{\bk}{\mathbf{k}}
\newcommand{\eps}{\varepsilon}
\title{A subspace theorem for manifolds}
\author{Emmanuel Breuillard and Nicolas de Saxcé}
\begin{document}

\maketitle

\begin{abstract}
We prove a theorem that generalizes Schmidt's Subspace Theorem in the context of metric diophantine approximation. To do so we reformulate the Subspace theorem in the framework of homogeneous dynamics by introducing and studying a slope formalism and the corresponding notion of semistability for diagonal flows. 
\end{abstract}

\section*{Introduction}

In 1972, Wolfgang Schmidt formulated his celebrated \emph{subspace theorem} \cite[Lemma~7]{schmidt_nf}, a far reaching generalization of results of Thue \cite{thue}, Siegel \cite{siegel}, and Roth \cite{roth} on rational approximations to algebraic numbers.
Around the same time, in his work on arithmeticity of lattices in Lie groups, Gregory Margulis \cite{margulis1971} used the geometry of numbers to establish the recurrence of unipotent flows on the space of lattices $\GL_d(\R)/\GL_d(\Z)$. More than two decades later, a quantitative refinement of this fact, the so-called quantitative non-divergence estimate, was used by Kleinbock and Margulis \cite{km} in their solution to the Sprindzuk conjecture regarding the extremality of non-degenerate manifolds in metric diophantine approximation.
As it turns out, these two remarkable results -- the subspace theorem and the Sprindzuk conjecture -- are closely related and can be understood together as statements about diagonal orbits in the space of lattices. In this paper we prove a theorem that generalizes both results at the same time. We also provide several applications.   This marriage is possible thanks to a better understanding of the geometry lying behind the subspace theorem, in particular the notion of Harder-Narasimhan filtration for one-parameter diagonal actions, which leads both to a dynamical reformulation of the original subspace theorem and to a further geometric understanding of the family of exceptional subspaces arising in Schmidt's theorem. The proof blends the diophantine input of Schmidt's original theorem with the dynamical input arising from the Kleinbock-Margulis approach and refinements recently obtained in \cite{abrs2} and \cite{dfsu}.


\bigskip

We now formulate the main theorem. Let $M=\phi(U)$ be a connected analytic submanifold of $\GL_d(\R)$ parametrized by an analytic map $\phi: U \to \GL_d(\R)$, where $U \subset \R^n$ is a connected open set and $n \in \N$. Let $\mu$ be the push-forward in $M$ of the Lebesgue measure on $U$. The Zariski closure of $M$ is said to be defined over $\QQ$ if for $(a_{ij})_{ij} \in \GL_d$  the ideal of polynomial functions in $\C[a_{ij}, \det^{-1}]$ that vanish on $M$ can be generated by polynomials with coefficients in $\QQ$.

\begin{theorem}[Subspace theorem for manifolds] 
\label{subspace-main}
Assume that the Zariski closure of $M$ in $\GL_d$ is defined over $\QQ$. 
Then there exists a finite family of proper subspaces $V_1,\dots,V_r$ of $\Q^d$ with $r=r(d)$ such that, for $\mu$-almost every $L$ in $M$, for every $\eps>0$, the integer solutions $\bx\in\Z^d$ to the inequality
\begin{equation}\label{total} \prod_{i=1}^d |L_i(\bx)| \leq \|\bx\|^{-\eps},\end{equation}
all lie in the union $V_1\cup\dots\cup V_r$, except a finite number of them.
\end{theorem}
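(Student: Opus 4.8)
The plan is to translate inequality~(\ref{total}) into a statement about excursions of a multiparameter diagonal orbit in the space of unimodular lattices, and then to separate the \emph{structured} excursions, which produce the fixed family $V_1,\dots,V_r$, from the \emph{generic} ones, which account only for the finitely many extra solutions allowed for each $\eps$.

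\emph{Step 1: dynamical reformulation.} Given a primitive solution $\bx\in\Z^d$ to~(\ref{total}) with $\|\bx\|$ large, choose the diagonal matrix $a=\diag(e^{t_1},\dots,e^{t_d})$, $\sum_i t_i=0$, that equalizes the quantities $e^{t_i}|L_i(\bx)|$; then $aL\bx$ has norm at most $\bigl(\prod_i|L_i(\bx)|\bigr)^{1/d}\le\|\bx\|^{-\eps/d}$, while $\|a\|$ stays polynomially bounded in $\|\bx\|$. Thus a solution is exactly a pair (a ``time'' parameter in a suitable cone of the Weyl chamber, and the lattice point $L\bx$) witnessing a deep excursion of the orbit $(a_tL\,\Z^d)_t$ into the cusp of $\GL_d(\R)/\GL_d(\Z)$, with the excursion depth tending to infinity as $\|\bx\|\to\infty$; conversely every sufficiently deep excursion arises this way.

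\emph{Step 2: destabilizing subspaces via the slope formalism.} When $a_tL\,\Z^d$ is high in the cusp, reduction theory provides a proper rational subspace $W\subsetneq\Q^d$ — the relevant piece of the Harder--Narasimhan filtration of $(a_tL,\br)$ in the sense set up above — whose image lattice $L(W\cap\Z^d)$ has covolume contracted by $a_t$; moreover the line $\Q\bx$ is itself destabilizing, so $\bx$ lies in the maximal destabilizing subspace $W$. It therefore suffices to understand which rational subspaces occur as destabilizers along the orbit. The slope formalism organizes them: the covolume functions $t\mapsto\covol\bigl(a_tL(W\cap\Z^d)\bigr)$ are piecewise linear with slopes in a fixed finite set, so only boundedly many combinatorial types of Harder--Narasimhan behaviour occur, indexed by a set of cardinality bounded in terms of $d$ alone.

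\emph{Step 3: the dichotomy, and the main obstacle.} For each combinatorial type one distinguishes two cases. (a) If the destabilizing subspace stays of bounded complexity — which, by the hypothesis that the Zariski closure of $M$ is defined over $\QQ$ together with Schmidt's original theorem, is forced for those destabilizers that persist for an unbounded amount of orbit time — then it belongs to a finite list of rational subspaces depending only on $M$; these are the $V_1,\dots,V_r$, and the bound $r=r(d)$ comes from the bound on the number of Harder--Narasimhan types combined with the arithmetic rigidity of Schmidt's theorem, which is precisely what limits, \emph{independently of $\eps$}, the number of rational subspaces that can be asymptotically destabilizing for an orbit through an algebraic variety. (b) If instead the destabilizer has large complexity — equivalently, $\bx$ has large height relative to its destabilizing subspace — then the excursion is generic, and applying the Kleinbock--Margulis quantitative non-divergence estimate to $M$ in the sharp forms of \cite{abrs2} and \cite{dfsu} (the required non-degeneracy being supplied by the $\QQ$-structure, after decomposing $M$ into finitely many pieces on which $\phi$ is non-degenerate in the relevant flag varieties) shows that for $\mu$-almost every $L$ the orbit spends only a finite total amount of time in the regime producing such $\bx$, so only finitely many such $\bx$ exist. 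The crux, and the technical heart of the paper, is reconciling the two quantifiers: for each fixed $\eps$ there are finitely many exceptions, but as $\eps\to0$ the excursions become shallower and more frequent, so a naive union of all destabilizing subspaces is infinite; overcoming this is exactly where the dynamical non-divergence input (controlling the frequent shallow excursions on average) must be married to Schmidt's arithmetic theorem (pinning the unbounded-height solutions to a finite, $\eps$-independent list), and making the slope formalism and the associated semistability statement for the diagonal flow along $M$ precise enough for these two inputs to dovetail is the main difficulty.
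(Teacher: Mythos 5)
There is a genuine gap: your Step 3 is not a proof but a description of what a proof would have to do, and you say so yourself (``the crux\dots is the main difficulty''). The theorem lives precisely in the two assertions you leave unargued. In case (a) you assert that the hypothesis that $Zar(M)$ is defined over $\QQ$ ``together with Schmidt's original theorem'' forces the persistent destabilizers into a finite list depending only on $M$ --- but Schmidt's theorem applies to a \emph{single} algebraic matrix, not to a manifold, and no mechanism is given for transferring its conclusion to $\mu$-almost every $L\in M$. In the paper this transfer is the whole point: one first proves the parametric statement (Theorem \ref{parametrici}) by an induction along the Harder--Narasimhan filtration of the submodular function $\tau_M$ on the \emph{rational} Grassmannian, applying Schmidt's theorem not to $L\in M$ but to an auxiliary algebraic point $L_0\in Zar(M)(\QQ)$ chosen (via Lemma \ref{countable}) so that $\tau_{L_0}=\tau_M$ on all rational subspaces, with carefully constructed linear forms $\overline{M}_k$ vanishing on the previous filtration step; and then one transfers the resulting lower bound on successive minima from $L_0$ to almost every $L$ via the inheritance principle (Theorem \ref{rate}), i.e.\ quantitative non-divergence plus Borel--Cantelli plus the Pl\"ucker-span compactness claim $(\ref{inZar})$. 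Your case (b) cannot stand on its own either: Kleinbock--Margulis non-divergence gives, for each $t$, a measure estimate in the parameter $L$, and after Borel--Cantelli a lower bound $\lambda_1(a_tL\Z^d)\geq e^{(\beta-\delta)t}$ with $\beta=\beta(\phi(B))$; but this $\beta$ is only useful once it is identified with the slope coming from the rational Harder--Narasimhan data, and that identification is exactly the arithmetic input from (a) that you have not supplied. Non-divergence alone is $\eps$- and height-blind and cannot ``pin'' the exceptional subspaces, so the dichotomy by ``complexity of the destabilizer'' does not decouple the two inputs as claimed.

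A secondary, more local issue: in Step 2 the destabilizing subspace $W$ you extract is the Grayson--Stuhler (covolume) destabilizer of the \emph{lattice} $a_tL\Z^d$, which depends on $t$ and $L$; the subspaces $V_1,\dots,V_r$ of the theorem come from the Harder--Narasimhan filtration of $\tau_M$ on rational subspaces, which is independent of $t$ and $L$. Showing that the former eventually coincide with (pieces of) the latter for almost every $L$ is again the content of Theorem \ref{parametrici}, not a consequence of reduction theory. Finally, your Step 1 uses a solution-dependent continuous diagonal element; to invoke the parametric statement one still needs the paper's discretization (rounding the exponents $\log|L_i(\bx)|$ to a finite set $P$ of integer unimodular flows, with $|P|$ depending on $\eps$ but the possible values of $V(a)$ confined, via Lemma \ref{values}, to a set of size bounded in terms of $d$ only), which is what makes $r=r(d)$ independent of $\eps$. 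Your appeal to ``boundedly many combinatorial types'' gestures at this lemma but is not a substitute for the quantifier bookkeeping it performs.
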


Here the $L_i$ are the linear forms on $\R^d$ given by the rows of $L \in \GL_d(\R)$, and $\|\cdot\|$ is the canonical Euclidean norm on $\R^d$.

We will in fact prove the theorem under a slightly weaker assumption on $M$ requiring only that what we call the \emph{Pl\"ucker closure} of $M$ be defined over $\QQ$, see \S \ref{schubert-sec} for the definition of  Pl\"ucker closure.

Note that we recover the original Schmidt's subspace theorem \cite{schmidt_da, edixhoven-evertse, bombierigubler} as the special case when $M$ is a singleton $\{L\}$ (then $n=0$ and $\mu$ is the Dirac mass at $L$). On the other hand, even in the case where $M$ is defined over $\Q$, the theorem is non-trivial.
Indeed, as we shall see in Section \ref{sprin}, it recovers the main result of Kleinbock and Margulis regarding the Sprindzuk conjecture.


The exceptional subspaces $V_i$ are independent of $\eps$, as in Vojta's refinement \cite{vojta, schmidt-refinement}, and they depend only the $\emph{rational}$ Zariski closure of $M$. In fact they are determined by what we call the \emph{rational Schubert closure} of $M$, that is the intersection of all rational translates $\mathcal{S}_\sigma g:=\overline{B\sigma B}$ containing $M$, where $g \in \GL_d(\Q)$ and $\mathcal{S}_\sigma$ is a standard Schubert variety associated to a permutation $\sigma$ and a Borel subgroup $B$ containing the diagonal subgroup.  Each $V_i$ contains infinitely many solutions to $(\ref{total})$, regardless of $\eps$. The number $r$ of exceptional subspaces can be bounded by a number depending only on $d$ (see Lemma \ref{values} and the remark following it).

The proof of Theorem \ref{subspace-main} goes via the proof of a stronger result, a parametric subspace theorem for manifolds, Theorem \ref {parametrici} below. This reformulates the problem in terms of the dynamics of a one-parameter diagonal flow $(a_t)_{t>0}$ on the space of lattices. We may summarize it informally as follows:

\begin{theorem}[Parametric version]\label{thm-2} For $\mu$-almost every $L$ in $M$ the lattice $a_tL\Z^d$ assumes a fixed asymptotic shape as $t$ tends to $+\infty$.
\end{theorem}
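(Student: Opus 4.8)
The precise content behind ``fixed asymptotic shape'' is that the Harder-Narasimhan filtration of the lattice $a_tL\Z^d$ relative to the diagonal flow $(a_t)$, together with the normalized logarithms of its successive minima, stabilizes as $t\to+\infty$; this is the substance of Theorem~\ref{parametrici}. The plan is to build the proof around a slope formalism for the fixed one-parameter group $(a_t)$. To each nonzero rational subspace $W\subseteq\Q^d$ and each $L\in M$ I would attach the function $t\mapsto-\tfrac1{\dim W}\log\covol\bigl(a_tL(\Z^d\cap W)\bigr)$ and, once the relevant limit is shown to exist, its asymptotic slope $\beta_W(L)$; the Harder-Narasimhan flag $\cF(L)$ is then the flag of successively slope-maximal subspaces. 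By reduction theory -- a decomposition of the space of lattices into a compact core and cuspidal pieces indexed by rational flags -- the shape of $a_tL\Z^d$ is, up to bounded error, encoded by $\cF(L)$ together with the slopes along it, so it suffices to show that for $\mu$-almost every $L$: (i) the limits $\beta_W(L)$ exist for every $W$; and (ii) the subspace that is slope-maximal for arbitrarily large $t$ is eventually a single fixed rational subspace, and likewise along the whole flag.

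For (ii) the diophantine input is Schmidt's subspace theorem: if a rational subspace $W$ has $\covol\bigl(a_tL(\Z^d\cap W)\bigr)$ abnormally small along a sequence $t\to+\infty$, then $W$ is spanned by integer vectors that violate an inequality of the shape $(\ref{total})$, so only finitely many such $W$ can occur; that this finite family is moreover independent of $L$ across $M$ -- equivalently, is governed by the rational Schubert closure of $M$ -- is where the hypothesis on the rational Pl\"ucker closure is used. For (i), and to rule out infinite oscillation between the finitely many candidate flags, the dynamical input is the quantitative non-divergence estimate of Kleinbock and Margulis, in the refined forms of \cite{abrs2} and \cite{dfsu}, applied to the pushforward measure $\mu$: the rationality of the Zariski (Pl\"ucker) closure of $M$ is precisely the non-degeneracy needed for those estimates, which bound the $\mu$-measure of the set of $L$ for which $a_tL\Z^d$ spends a long time in the $\eps$-thin part of any given cuspidal stratum. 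A Borel-Cantelli argument over a discretization of time then shows that for $\mu$-a.e. $L$ the excursions of $t\mapsto\log\covol\bigl(a_tL(\Z^d\cap W)\bigr)$ away from its eventual linear trend are $o(t)$, giving existence of $\beta_W(L)$, and that the slope-maximal subspace cannot switch infinitely often between two distinct rational candidates, since each such switch would force a deep cuspidal excursion and the set of $L$ admitting infinitely many of them has $\mu$-measure zero. Combining (i) and (ii), $\cF(L)$ is eventually constant and the normalized successive minima converge, which is the asserted fixed asymptotic shape; the flags, and hence the exceptional subspaces in Theorem~\ref{subspace-main}, are rational and bounded in number by a function of $d$ because they come from the finite family above.

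The main obstacle is the interface between these two inputs. Schmidt's subspace theorem is a statement about a single $L$ with no uniformity as $L$ ranges over $M$, whereas Kleinbock-Margulis non-divergence is a measure estimate blind to which rational subspaces are destabilizing; neither alone controls the transition regime in which some $W$ switches between being destabilizing and not. Showing that for $\mu$-a.e. $L$ this happens only finitely often requires a version of the subspace theorem uniform over the family $M$ -- uniform enough to plug into the non-divergence machinery -- and it is here that the rational Schubert/Pl\"ucker closure hypothesis and the recent refinements of quantitative non-divergence in \cite{abrs2,dfsu} are essential. A secondary, more technical difficulty is to set up the slope formalism so that semistability of the graded pieces is preserved along the flow and so that the Harder-Narasimhan flags can be taken rational with the associated exceptional subspaces finite in number depending only on $d$.
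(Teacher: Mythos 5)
Your proposal assembles the right ingredients (a slope/Harder--Narasimhan formalism, Schmidt's subspace theorem at algebraic points, Kleinbock--Margulis non-divergence plus Borel--Cantelli), but it misplaces the difficulty and leaves the crucial gluing step unresolved. First, your item (i) is vacuous as stated: for a \emph{fixed} rational subspace $W$ represented by a primitive integer multivector $\bw$, one has $\covol\bigl(a_tL(\Z^d\cap W)\bigr)=\|a_tL\bw\|$, whose logarithm divided by $t$ converges to $\tau_L(W)=\max\{I(a):(L\bw)_I\neq 0\}$ for \emph{every} $L$, with no dynamics needed. The genuine difficulty is the lower bound on the successive minima $\lambda_k(a_tL\Z^d)$, an infimum over \emph{all} integer vectors simultaneously, which is what (\ref{ts2}) isolates. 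Relatedly, the paper does not define the flag as a limit of $t$- or $L$-dependent data that must be shown to stabilize: the filtration $V_0<\dots<V_h$ and the slopes $\Lambda_k$ are defined \emph{a priori} from the combinatorial submodular function $\tau_M(V)=\max_{L\in M}\tau(LV)$ on the rational Grassmannian (which depends only on which Pl\"ucker coordinates vanish identically on $M$), and the theorem is that the actual minima match this prediction. This removes the need to rule out ``infinite switching'' of flags.

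Second, the interface you flag as ``the main obstacle'' --- that Schmidt's theorem is pointwise in $L$ while non-divergence is a measure estimate --- is not resolved by any ``uniform version of the subspace theorem'', and none is used. The resolution is the inheritance principle (Theorem \ref{rate}): for $\mu$-a.e.\ $L$, the liminf exponent $\mu_k(L)$ equals $\sup_{L'\in Zar(M)}\mu_k(L')$. One then chooses, by a counting/field-degree argument (Lemma \ref{countable}), a \emph{single} point $L_0\in Zar(M)\cap\GL_d(\QQ)$ with $\tau_{L_0}=\tau_M$ on all rational subspaces, proves the lower bound (\ref{ts2}) at $L_0$ alone via Schmidt's theorem (applied on the quotient $\R^d/V_{i-1}$ to modified forms $\overline{M}_k$ vanishing on $V_{i-1}$, with a minimality argument requiring a second application of Schmidt), and transfers the conclusion to $\mu$-a.e.\ $L$ by inheritance. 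Without this mechanism your plan does not close; with it, no uniformity over $M$ in the diophantine input is required.
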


By ``fixed asymptotic shape'' we mean two things. Firstly that the successive minima are asymptotic to $e^{\Lambda_k t}$ for some real numbers $\Lambda_k$, Lyapunov exponents of sorts, depending only on $M$ and $a=(a_t)_{t>0}$ (the dependence on $a$ is piecewise linear). In particular as $t$ varies there can only be oscillations of subexponential size for successive minima. And secondly that the successive minima determine a fixed partial flag in $\Z^d$.
In other words there is a fixed partial rational flag $W_1\leq \ldots \leq W_d$ in $\Q^d$, such that if $\Lambda_k<\Lambda_{k+1}$, then the $k$ first successive minima of $a_tL\Z^d$ are always realized by vectors from $W_k$ when $t$ is large enough. The $\Lambda_k$'s and the flag $\{W_k\}_k$ depend only on $a$ and on the rational Schubert closure of $M$. Grouping together the different $W_i$ obtained by varying the one-parameter subgroup $a$ we obtain the family of exceptional subspaces $V_i$ appearing in Theorem \ref{subspace-main}.

This rational flag arises naturally as the \emph{Harder-Narasimhan filtration} associated to a certain submodular function on the rational grassmannian: the maximal expansion rate of the subspace under the flow. We recall in \S \ref{h-n-sec} that any submodular function on a grassmannian gives rise to a Grayson polygon, a notion of semistability, a Harder-Narasimhan filtration and certain coefficients, the slopes of the polygon, which in our case will correspond to the Lyapunov exponents $\Lambda_k$ mentioned above.  This is the so-called ``slope formalism'', which arises  in particular in the study of Euclidean lattices as first described by Stuhler \cite{stuhler} and Grayson \cite{grayson}, and in many other subjects as well \cite{bost-bourbaki, randriam}.  


Although we have restricted to the current setting for clarity of exposition in this introduction, the result will be proved for more general measures $\mu$ than push-forwards of the Lebesgue measure by analytic maps; the exceptional subspaces then depend only on the Zariski closure of the support of $\mu$. The right technical framework is that of \emph{good measures}, which are closely related to the friendly measures of \cite{kleinbock-lindenstrauss-weiss}, see \S \ref{mgm}.

\bigskip

The paper is organized as follows. In Section \ref{sec-1} we begin by formulating the technical version of Theorem \ref{thm-2} and then proceed to describe the slope formalism on the grassmannian associated to a one-parameter flow and in particular discuss the associated notion of Harder-Narasimhan filtration. The proof of Theorems \ref{subspace-main} and \ref{thm-2} is carried out in \S  \ref{proof-para} and \S \ref{proof-sub} after a discussion of the Kleinbock-Margulis quantitative non-divergence estimates. In Section \ref{section:nf} we formulate and sketch a proof of an extension of Theorem \ref{subspace-main} to arbitrary number fields, which is analogous to the classical extension of Schmidt's subspace theorem due to Schlickewei to multiple places and targets  \cite{schlickewei_padic, bombierigubler, zannier-lectures}. Finally in Section \ref{app} we prove several applications of the main result. 


For the sake of brevity we do not state these applications in the introduction and refer the reader to Section \ref{app} directly instead. Let us only briefly mention that there are five main applications: $(i)$ we explain how to recover the Sprindzuk conjecture (Kleinbock-Margulis theorem) from Theorem \ref{subspace-main}, $(ii)$ we establish a manifold version of the classical Ridout theorem regarding approximation by rationals whose denominators have prescribed prime factors, $(iii)$ we recover the main results of \cite{abrs2} regarding (weighted) diophantine approximation on submanifolds of matrices showing that they hold also for submanifolds defined over $\QQ$ (and not only over $\Q$), $(iv)$ we prove an optimal criterion for strong  extremality (Corollary \ref{criterion}), which answers in this case a question from \cite{kleinbock-margulis-wang,kmb}, $(v)$ we prove a Roth-type theorem for non-commutative diophantine approximation on nilpotent Lie groups, extending to algebraic points what was done for Lebesgue almost every point in our previous work with Aka and Rosenzweig  \cite{abrs,abrs2}.


Further applications and an extension of some of the results of this paper to other reductive groups and homogenous varieties can be found in the second author's forthcoming work \cite{saxce-hdr}.

\section{The main result}\label{sec-1}

\subsection{Dynamical formulation}\label{dyn-sec}

A lattice, that is a discrete subgroup $\Delta$ of rank $d$ in $\R^d$, can be written:
\[ \Delta= \Z u_1\oplus\dots\oplus\Z u_d,\]
where $(u_i)_{1\leq i\leq d}$ is a basis of $\R^d$. And the space $\Omega$ of lattices can be identified with the homogeneous space
\[ \Omega \simeq \GL_d(\R)/\GL_d(\Z).\]
The position of a lattice $\Delta$ in the space $\Omega$, up to a bounded error, is described by its successive minima $\lambda_1(\Delta)\leq\dots\leq\lambda_d(\Delta)$, defined by
\[ \lambda_i(\Delta)
= \inf\{\lambda>0\ |\ \rk(\Delta\cap B(0,\lambda))\geq i\},\] where $\rk(A)$ for $A \subset \Delta$ denotes the rank of the free abelian subgroup of $\Delta$ generated by $A$, and $B(0,\lambda)$ is the Euclidean ball of radius $\lambda$ centered at the origin in $\R^d$. 

Theorem \ref{subspace-main} will be deduced from the following description of the asymptotic behavior of the successive minima along a diagonal orbit of the lattice $L\Z^d$, where $L$ is a $\mu$-generic point of $M$. Here, as in Theorem \ref{subspace-main}, $M=\phi(U)$ is the image of a connected open set in some Euclidean space $U \subset \R^n$ under an analytic map $\phi: U \to \GL_d(\R)$, and $\mu$ is the push-forward under $\phi$ of the Lebesgue measure on $U$. 


\begin{theorem}[Strong parametric subspace theorem for manifolds]
\label{parametrici} Assume that the Zariski closure of $M$ is defined over $\QQ$. Let $(a_t)_{t \ge 0}$ be a diagonal one-parameter semigroup in $\GL_d(\R)$. Then there exist real numbers $\Lambda_1\leq\Lambda_2\leq\dots\leq\Lambda_d$ such that for $\mu$-almost every $L\in M$ and each $k\in\{1,\dots,d\}$, \begin{equation}\label{lim}
 \lim_{t\to +\infty} \frac{1}{t}\log\lambda_k(a_tL\Z^d)=\Lambda_k.\end{equation}
Moreover, if $0=d_0 < d_1<\dots<d_h=d$ in $\{0,\dots,d\}$ are chosen so that
\[ \Lambda_1=\dots=\Lambda_{d_1}<\Lambda_{d_1+1}=\dots=\Lambda_{d_2}<\dots
<\Lambda_{d_{h-1}+1}=\dots=\Lambda_d,\]
then there exist rational subspaces $V_\ell$, $\ell=0,\dots,h$ in $\Q^d$ such that \begin{itemize} \item  $\dim V_\ell=d_\ell$ and  $\{0\}=V_0 < V_1 < \dots < V_{h}=\Q^d$, \item for $\mu$-almost every $L\in M$ the first $d_\ell$ successive minima of $a_tL\Z^d$ are attained in $V_\ell$ provided $t$ is large enough. \end{itemize} In other words: for all $\eps>0$, for $\mu$-almost every $L\in M$, there is $t_{L,\eps}>0$ such that for $t>t_{L,\eps}$, $\ell=1,\ldots,h,$ and $\bx\in\Z^d$,
\begin{equation}\label{limm} \|a_tL\bx\| \leq e^{t(\Lambda_{d_\ell}-\eps)}
\quad\Rightarrow\quad
\bx\in V_{\ell-1}.\end{equation}
\end{theorem}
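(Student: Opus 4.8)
The plan is to reformulate the successive minima in terms of a submodular function on the rational Grassmannian, apply the Harder--Narasimhan/slope formalism recalled in the body of the paper, and then invoke the quantitative non-divergence estimates of Kleinbock--Margulis to show that the generic lattice $a_tL\Z^d$ realizes exactly the ``semistable'' behavior predicted by that formalism. First I would fix the one-parameter semigroup $(a_t)$ and consider, for each rational subspace $W\le\Q^d$, the exponential growth rate $\tau(W):=\lim_{t\to\infty}\frac1t\log\covol(a_tL W_\R / a_tL(W\cap\Z^d))$, or rather its upper-bound version given by the top weight of $a_t$ on $\bigwedge^{\dim W}$ evaluated at the Plücker point of $LW$. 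The key structural observation is that $W\mapsto \tau(W)$, suitably normalized, is submodular, and — crucially — for $\mu$-almost every $L$ its value depends only on the rational Schubert closure of $M$, not on $L$ itself; this is where the hypothesis that the Zariski (Plücker) closure of $M$ is defined over $\QQ$ enters, together with the analyticity of $\phi$, which forces the relevant Plücker coordinates to vanish identically on $M$ or to be nonzero $\mu$-almost everywhere. Granting this, the slope formalism of \S\ref{h-n-sec} produces a canonical Grayson polygon with slopes $\Lambda_1\le\dots\le\Lambda_d$ and a canonical rational Harder--Narasimhan flag $\{V_\ell\}$, and these are exactly the objects in the statement.

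The next step is to upgrade this ``expected'' behavior to the actual asymptotics of the successive minima. The upper bound $\limsup \frac1t\log\lambda_k(a_tL\Z^d)\le\Lambda_k$ is essentially formal: by Minkowski's second theorem it suffices to control $\covol$ of primitive rank-$k$ sublattices, and the maximal such covolume growth rate over rational sublattices is $\Lambda_1+\dots+\Lambda_k$, by the definition of the Grayson polygon as an upper envelope. The matching lower bound is the substantive dynamical input: one must rule out that the lattice $a_tL\Z^d$ develops an unexpectedly short vector (or short sublattice) along a sparse set of times $t$. Here I would apply the Kleinbock--Margulis quantitative non-divergence estimate (in the sharpened form from \cite{abrs2}, \cite{dfsu}) to each of the finitely many relevant wedge representations $\bigwedge^k\R^d$: because the map $t\mapsto a_t\bigwedge^k(L\phi(\cdot))$ is built from $(C,\alpha)$-good functions — using analyticity of $\phi$ and the good-measure property of $\mu$ — the measure of the set of $L$ for which $\lambda_k(a_tL\Z^d)$ dips below $e^{t(\Lambda_k-\eps)}$ for arbitrarily large $t$ is zero. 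Combining finitely many such null sets over $k=1,\dots,d$ gives \eqref{lim} for $\mu$-almost every $L$.

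Finally, to obtain the flag statement \eqref{limm} I would argue that once the $k$-th successive minimum is asymptotically $e^{\Lambda_k t}$ with $\Lambda_{d_\ell}<\Lambda_{d_\ell+1}$, any rank-$d_\ell$ sublattice realizing the first $d_\ell$ minima must, for $t$ large, be the one of smallest covolume growth rate, and by the Harder--Narasimhan property this is the canonical subspace $V_\ell$; a vector $\bx$ with $\|a_tL\bx\|\le e^{t(\Lambda_{d_\ell}-\eps)}$ then lies in the span of the first $d_\ell$ minimal vectors, hence in $V_\ell$, and — after re-indexing — in $V_{\ell-1}$ for the gap below it. I expect the main obstacle to be the second paragraph: establishing the almost-sure lower bound on the successive minima uniformly along all large $t$, which requires a careful Borel--Cantelli-type argument feeding the non-divergence estimate with the correct ``good'' bounds and checking that the finitely many exceptional subspaces produced by the Schmidt input of \cite{schmidt_nf} are precisely absorbed by the Harder--Narasimhan flag. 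The compatibility between the diophantine exceptional subspaces and the dynamically defined flag $\{V_\ell\}$ is the conceptual heart of the argument and the part most likely to require the full strength of the slope formalism developed earlier in the paper.
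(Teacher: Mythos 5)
Your overall architecture (slope formalism on the rational Grassmannian to define the $\Lambda_k$ and the flag $\{V_\ell\}$, Minkowski for the upper bound, non-divergence for the generic lower bound) matches the paper's, but there is a genuine gap at the central step, the lower bound $(\ref{ts2})$. You assert that the Kleinbock--Margulis quantitative non-divergence estimate, fed with $(C,\alpha)$-good functions coming from analyticity, directly shows that $\lambda_k(a_tL\Z^d)$ almost surely does not dip below $e^{t(\Lambda_k-\eps)}$. But Theorem \ref{klw} only converts a \emph{hypothesis} --- a lower bound on $\sup_{L\in M}\|a_tL\bw\|$ that is uniform over all nonzero integer wedges $\bw$, i.e.\ condition (\ref{cbeta}) with the correct exponent --- into an almost-everywhere conclusion. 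For each \emph{fixed} rational subspace $W$ the supremum grows like $e^{t\tau_M(W)}$, which lies above the Grayson polygon, but the implied constant degenerates as $\bw$ ranges over $\wedge^k\Z^d$, and ruling out that degeneration is precisely the arithmetic content of the theorem; it is not supplied by non-divergence or by analyticity. In the paper this is handled by first using Lemma \ref{countable} to produce a single point $L_0\in Zar(M)\cap\GL_d(\QQ)$ with $\tau_{L_0}=\tau_M$ on all rational subspaces, then proving the lower bound at $L_0$ by applying Schmidt's subspace theorem to the linear forms induced on the quotient $\R^d/V_{i-1}$, together with a minimality argument on the exceptional subspaces that exploits the Harder--Narasimhan slope inequality $\frac{\tau_L(V)-\tau_L(V_{i-1})}{\dim V-d_{i-1}}\ge\Lambda_{d_i}$; only then does the inheritance principle (Theorem \ref{rate}, which is where non-divergence and Borel--Cantelli live) transfer the bound from $L_0$ to $\mu$-almost every $L$. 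You mention Schmidt's theorem only in your final paragraph, in connection with the flag statement, which misplaces where it must enter: without it your claim in the second paragraph is simply false for a general analytic $M$ (and indeed fails when the Zariski closure is not defined over $\QQ$, cf.\ \S\ref{R-sec}, even though all your stated ingredients --- analyticity, goodness, submodularity --- are still present there).

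A secondary, smaller issue: your derivation of \eqref{limm} treats the flag statement as a consequence of \eqref{lim} obtained afterwards, whereas the paper runs an induction on $\ell$ in which the statement that the first $d_{\ell-1}$ minima are attained in $V_{\ell-1}$ is needed as an input to prove \eqref{lim} at the next level (it is what allows one to pass to the quotient $\R^d/V_{i-1}$ and apply Schmidt there). Your order of quantifiers can be made to work once \eqref{lim} is fully established, but as written the two halves of the theorem cannot be decoupled in the way you propose.
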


When $M$ is reduced to a singleton, the above theorem is a refinement of the parametric subspace theorem, often attributed to Faltings and Wüstholz \cite[Theorem~9.1]{faltings-wustholz}. As shown below, it can also be obtained directly from Schmidt's subspace theorem in its original form. The two results are really equivalent.

An important point to make is that $(\ref{lim})$ is a limit and not only a liminf or a limsup. This can be understood as saying that the diagonal orbit of a lattice defined over $\QQ$ has an \emph{asymptotic shape} at infinity. Of course the $\lambda_k$ can fluctuate, but only up to a small exponential error. This is of course in sharp contrast with what happens for certain specific values of $L$. Indeed it is possible to construct matrices $L$ for which the successive minima have an almost arbitrary behavior along a given diagonal orbit, see \cite[Theorem 1.3]{roy} and \cite[Theorem 2.2]{d-s-u-cras}.


\begin{corollary}[Parametric subspace theorem for manifolds]
\label{paraweak} Keep the same assumptions as in Theorem \ref{parametrici}. Let $a=(a_t)_{t \ge0}$ be a one-parameter diagonal semigroup in $\SL_d(\R)$. There exists a proper subspace $V(a)$ of $\Q^d$ such that given $\eps>0$, for $\mu$-almost every $L\in M$, there is $t_{L,\eps}>0$ such that if  $t>t_{L,\eps}$ and $\bx\in\Z^d$,
\[ \|a_tL\bx\| \leq e^{-\eps t}
\quad\Rightarrow\quad
\bx\in V(a).\]
\end{corollary}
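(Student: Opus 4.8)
The plan is to derive Corollary \ref{paraweak} directly from Theorem \ref{parametrici} applied to the fixed one-parameter semigroup $a=(a_t)_{t\ge 0}$. First I would invoke Theorem \ref{parametrici} to produce the real numbers $\Lambda_1\leq\dots\leq\Lambda_d$, the integers $0=d_0<d_1<\dots<d_h=d$ recording the jumps, and the rational flag $\{0\}=V_0<V_1<\dots<V_h=\Q^d$. Since $a$ lies in $\SL_d(\R)$, the flow is volume-preserving, so $\sum_{k=1}^d\Lambda_k=0$ (the logarithm of the covolume of $a_tL\Z^d$ is constant in $t$, and by \eqref{lim} its normalized limit is $\sum_k\Lambda_k$). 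In particular $\Lambda_1\leq 0$: if all $\Lambda_k$ were positive the sum could not vanish, and we cannot have all $\Lambda_k=0$ unless $h=0$, which is excluded because then the flag is trivial. Hence $\Lambda_1<0$, so $d_1\geq 1$ and there is a genuine nontrivial first step $V_1$ with $\dim V_1=d_1$; being a subspace of dimension $d_1\leq d$ it is proper precisely when $d_1<d$, i.e. $h\geq 2$; if $h=1$ then all $\Lambda_k$ are equal and, being summing to zero, all vanish, contradicting $\Lambda_1<0$. So $h\geq 2$ and $V_1$ is a proper rational subspace; I set $V(a):=V_1$.

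Next I would extract the desired implication from \eqref{limm}. Fix $\eps>0$. Since $\Lambda_{d_1}=\Lambda_1<0$, for $\eps$ small enough (say $\eps<|\Lambda_1|$) we have $\Lambda_{d_1}-\eps<-\eps<0$, so the threshold $e^{t(\Lambda_{d_1}-\eps)}$ is at most $e^{-\eps t}$ for all $t\geq 0$. Applying the $\ell=1$ case of \eqref{limm}, for $\mu$-almost every $L$ there is $t_{L,\eps}>0$ such that for $t>t_{L,\eps}$ and $\bx\in\Z^d$,
\[
\|a_tL\bx\|\leq e^{-\eps t}\leq e^{t(\Lambda_{d_1}-\eps)}\quad\Rightarrow\quad \bx\in V_0=\{0\}\subset V_1.
\]
Wait — this gives $\bx\in V_0$, which is too strong; the correct reading of \eqref{limm} with $\ell=1$ is $\|a_tL\bx\|\leq e^{t(\Lambda_{d_1}-\eps)}\Rightarrow \bx\in V_{\ell-1}=V_0$, which would force the vector to be zero. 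To get a proper nonzero target I should instead take the largest index $\ell$ with $\Lambda_{d_\ell}<0$ and use that step: since $\sum\Lambda_k=0$ and $\Lambda_1<0$, there is a unique $\ell^\ast$ with $\Lambda_{d_{\ell^\ast}}<0\leq\Lambda_{d_{\ell^\ast}+1}$ (with the convention $\Lambda_{d_{h}+1}=+\infty$); then $V_{\ell^\ast}$ has dimension $d_{\ell^\ast}<d$ because $\Lambda_d\geq 0>\Lambda_{d_{\ell^\ast}}$ forces $d_{\ell^\ast}<d$, so $V_{\ell^\ast}$ is proper. Setting $V(a):=V_{\ell^\ast}$ and applying \eqref{limm} with $\ell=\ell^\ast+1$: for $\eps<|\Lambda_{d_{\ell^\ast}}|$ and $t$ large, $e^{-\eps t}\leq e^{t(\Lambda_{d_{\ell^\ast+1}}-\eps)}$ fails in general since $\Lambda_{d_{\ell^\ast+1}}\geq 0$; so rather I apply \eqref{limm} with $\ell=\ell^\ast$ using threshold $e^{t(\Lambda_{d_{\ell^\ast}}-\eps)}\geq e^{-\eps'}$... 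The cleanest route is: for any target $\eps>0$ in the corollary, choose $\ell^\ast$ as above, note $\Lambda_{d_{\ell^\ast+1}}\geq 0$ so $e^{-\eps t}\leq e^{t(\Lambda_{d_{\ell^\ast+1}}-\eps)}$ holds for all $t\ge0$, and apply \eqref{limm} with $\ell=\ell^\ast+1$ to conclude $\bx\in V_{\ell^\ast}=:V(a)$, a proper rational subspace.

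Finally I would address uniformity: \eqref{limm} already provides, for each $\eps>0$, a $\mu$-conull set and a threshold $t_{L,\eps}$, which is exactly the form of the conclusion of Corollary \ref{paraweak}, so no further work is needed — the subspace $V(a)=V_{\ell^\ast}$ is independent of $\eps$ and of $L$, as required. The main (and essentially only) obstacle is the bookkeeping in the previous paragraph: correctly identifying which step $V_{\ell^\ast}$ of the Harder--Narasimhan flag is \emph{proper} and simultaneously \emph{captures all short vectors}, which hinges on the sign analysis $\sum_k\Lambda_k=0$ together with $\Lambda_1<0$ forcing a nontrivial jump strictly below dimension $d$; everything else is a direct quotation of Theorem \ref{parametrici}.
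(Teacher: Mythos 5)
Your final route is essentially the paper's proof: unimodularity plus Minkowski's second theorem gives $\sum_k\Lambda_k=0$, hence the top slope is nonnegative, and then \eqref{limm} converts the bound $\|a_tL\bx\|\leq e^{-\eps t}$ into membership in a fixed member of the Harder--Narasimhan flag; the paper simply takes $V(a)=V_{h-1}$ and uses $\Lambda_d\geq 0$, whereas you take the (possibly smaller) $V_{\ell^\ast}$ with $\Lambda_{d_{\ell^\ast}}<0\leq\Lambda_{d_{\ell^\ast}+1}$ and apply \eqref{limm} with $\ell=\ell^\ast+1$. However, there is a genuine flaw in your preliminary sign analysis: you claim that all $\Lambda_k=0$ is impossible (``unless $h=0$, which is excluded because then the flag is trivial'') and deduce $\Lambda_1<0$ and $h\geq 2$. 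Theorem \ref{parametrici} does not exclude the trivial flag: when $\R^d$ is $M$-semistable with respect to $a$ one has $h=1$ and, for unimodular $a$, $\Lambda_1=\dots=\Lambda_d=0$; this is a perfectly legitimate case (indeed the expected one in applications such as the Sprindzuk/Kleinbock--Margulis setting). In that case your $\ell^\ast$ does not exist and your argument, as written, produces no $V(a)$.

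The repair is immediate and is exactly what the paper's uniform choice $V(a)=V_{h-1}$ achieves: when $\Lambda_1\geq 0$ (equivalently, given $\sum_k\Lambda_k=0$, when all $\Lambda_k=0$ and $h=1$) take $\ell^\ast=0$, i.e.\ $V(a)=V_0=\{0\}$, which is a proper subspace of $\Q^d$, and apply \eqref{limm} with $\ell=1$: since $\Lambda_{d_1}=0$ the threshold is exactly $e^{-\eps t}$, so every solution lies in $V_0$. Two smaller remarks: your mid-proof attempt with $\ell=1$ under $\Lambda_1<0$ reversed the comparison between $e^{-\eps t}$ and $e^{t(\Lambda_{d_1}-\eps)}$ (with $\Lambda_{d_1}<0$ the hypothesis of \eqref{limm} is \emph{stronger} than $\|a_tL\bx\|\leq e^{-\eps t}$, so nothing follows that way), though you abandoned that route and your final comparison is correct because $\Lambda_{d_{\ell^\ast+1}}\geq 0$; and the conclusion $\bx\in V_0=\{0\}$ would not have been ``too strong'' had it been available --- it merely says there are no nonzero solutions, which is precisely the right statement in the semistable case.
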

\begin{proof}
Here we have assumed that $a_t$ is unimodular. By Minkowski's theorem the product of all $d$ successive minima of $a_tL\Z^d$ is bounded above and below independently of $t$. In view of $(\ref{lim})$, this implies that $\sum_{i=1}^d\Lambda_i=0.$ So $\Lambda_d \ge 0$. Hence we can take $V(a)=V_{h-1}$ in the notation of Theorem \ref{parametrici}.
\end{proof}

The rational subspaces $V_i$ appearing in Theorem \ref{parametrici} depend only on $(a_t)_{t \ge 0}$ and on the rational Zariski closure of $M$, namely the intersection of the closed algebraic subsets of $\GL_d$ defined over $\Q$ and containing $M$. This will be clear from the proof of Theorem \ref{parametrici} given below, where a more precise description of $V(a)$ and the $V_i$ will be given. As we will see, the filtration $\{V_i\}_i$ is the \emph{Harder-Narasimhan} filtration associated to $M$ and $(a_t)_{t \ge 0}$, and the $\Lambda_i$ are the slopes of the \emph{Grayson polygon}. The next few paragraphs contain preparations towards the proof of Theorem \ref{parametrici} given at the end of this section.

\subsection{Expansion rate and submodularity}\label{exp-rate-sec}

In this subsection $M$ is an arbitrary subset of $\GL_d(\R)$ and $a=(a_t)_{t \ge 0}$ a one-parameter diagonal semigroup. We write $a_t=\diag(e^{A_1t}, \ldots, e^{A_dt})$ for some real numbers $A_1,\ldots,A_d$. For a non-zero subspace $V \leq \R^d$ we define its \emph{expansion rate} with respect to $a$ by 
\begin{equation}\label{taulimit} \tau(V):= \lim_{t \to +\infty} \frac{1}{t} \log \|a_t\bv\|
\end{equation}
where  $\bv$ represents $V$ in an exterior power $\wedge^k\R^d$.  This quantity takes values in the finite set of eigenvalues of $\log a_1$ in exterior powers. More precisely: 
\begin{equation}\label{taulimit2} \tau(V)= \max \{I(a);\ I \subset [1,d], |I|=k, \bv_I \neq 0\}
\end{equation}
where  $I(a)= \sum_{i \in I} A_i$ and $\bv_I$ is the coordinate of $\bv$ in the basis $e_I=e_{i_1} \wedge \ldots \wedge e_{i_k}$ of $\wedge^k \R^d$, where $I=\{i_1,\ldots,i_k\}$, $k=\dim V$.  By convention we will also set $\tau(\{0\})=0$. We leave it to the reader to check that if $A_1\ge \ldots \ge A_d$, then \begin{equation}\label{taulimit2bis}  \tau(V)= I_V(a), \textnormal{ with }
I_V:=\{i \in [1,d], V \cap \mathcal{F}_i > V \cap \mathcal{F}_{i+1} \}
\end{equation}
where $\mathcal{F}_i= \langle e_i,\ldots,e_d \rangle$ and $(e_1,\ldots,e_d)$ is the canonical basis of $\R^d$. 


For a subset $M \subset \GL_d(\R)$, we set: 
\begin{equation}\label{taulimit3} \tau_M(V) := \max_{L \in M} \tau(LV).
\end{equation}
Similarly we see readily that 
\begin{equation}\label{taulimit4} \tau_M(V)= \max \{I(a); I \subset [1,d], |I|=\dim V, (L\bv)_I \neq 0 \textnormal{ for some }L\in M\}.
\end{equation}
From this formula it is clear that for all subspaces $V$ \[ \tau_M(V)=\tau_{Zar(M)}(V)\]
 where $Zar(M)$ is the Zariski closure.


\begin{lemma}[Submodularity of expansion rate]
\label{tausm} Let $M \subset \GL_d(\R)$ and assume that its Zariski closure is irreducible. Then the map $V \mapsto \tau_M(V)$ is submodular on the grassmannian, i.e. satisfies, for every pair of subspaces $W_1,W_2$,
\begin{equation}\label{subm} \tau_M(W_1)+\tau_M(W_2) \geq \tau_M(W_1\cap W_2) + \tau_M(W_1+W_2).\end{equation}
\end{lemma}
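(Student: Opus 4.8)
The plan is to reduce the submodular inequality for $\tau_M$ to a pointwise statement about the combinatorial function $I \mapsto I(a)$ on subsets of $[1,d]$, by exploiting the description \eqref{taulimit4}. Without loss of generality I order the exponents so that $A_1 \ge \dots \ge A_d$, so that by \eqref{taulimit2bis} we have $\tau(LV) = I_{LV}(a)$, where $I_{LV} = \{ i : LV \cap \mathcal{F}_i > LV \cap \mathcal{F}_{i+1} \}$ and $\mathcal{F}_i = \langle e_i, \dots, e_d\rangle$. The key elementary observation is that for a \emph{fixed} $L \in \GL_d(\R)$ the map $V \mapsto \tau(LV)$ is already submodular on the grassmannian: the set $I_{LV}$ is determined by the jumps of the flag $LV \cap \mathcal{F}_\bullet$, and since $(W_1 \cap W_2) \cap \mathcal F_i = (W_1 \cap \mathcal F_i) \cap (W_2 \cap \mathcal F_i)$ while $(W_1 + W_2) \cap \mathcal F_i \supseteq (W_1 \cap \mathcal F_i) + (W_2 \cap \mathcal F_i)$, a standard rank count gives, for each $i$,
\[ \dim(LW_1 \cap \mathcal F_i) + \dim(LW_2 \cap \mathcal F_i) \le \dim(L(W_1\cap W_2)\cap \mathcal F_i) + \dim(L(W_1 + W_2)\cap \mathcal F_i). \]
Telescoping in $i$ and weighting by the gaps $A_i - A_{i+1} \ge 0$ converts this into $\tau(LW_1) + \tau(LW_2) \ge \tau(L(W_1\cap W_2)) + \tau(L(W_1+W_2))$, since $\tau(LV) = \sum_i (A_i - A_{i+1})\dim(LV \cap \mathcal F_i)$ up to the constant $A_d \dim V$, and $\dim(W_1) + \dim(W_2) = \dim(W_1 \cap W_2) + \dim(W_1 + W_2)$ handles the constant term.

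The genuine difficulty is passing from this pointwise submodularity to submodularity of the maximum $\tau_M(V) = \max_{L \in M} \tau(LV)$, because the maximum of submodular functions need not be submodular in general. This is exactly where the irreducibility hypothesis on the Zariski closure $X = Zar(M)$ enters. Here is the mechanism I would use. By \eqref{taulimit4}, $\tau_M(V)$ equals the maximum of $I(a)$ over those $I$ with $|I| = \dim V$ for which the Plücker coordinate $(L\bv)_I$ is not identically zero on $X$; equivalently, $\tau_M(V) = \tau_L(V)$ for a \emph{generic} point $L \in X$ (a single generic $L$ simultaneously realizes the maximum for $V$ and each of $W_1, W_2, W_1\cap W_2, W_1 + W_2$, since each condition "$(L\bv)_I \ne 0$" that holds somewhere on the irreducible variety $X$ holds on a dense open subset, and a finite intersection of dense opens in the irreducible $X$ is nonempty). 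So choose $L$ generic enough that $\tau_M(W_1) = \tau(LW_1)$, $\tau_M(W_2) = \tau(LW_2)$, $\tau_M(W_1 \cap W_2) = \tau(L(W_1\cap W_2))$ and $\tau_M(W_1 + W_2) = \tau(L(W_1 + W_2))$ all hold for that same $L$; then the submodular inequality for $\tau_M$ at $(W_1, W_2)$ is \emph{identical} to the submodular inequality for the single matrix $L$, which we established in the first paragraph.

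To make the genericity step airtight I would phrase it as follows. For a subspace $V$ and a multi-index $I$ with $|I| = \dim V$, let $\mathcal U_{V,I} = \{ L \in X : (L\bv)_I \ne 0 \}$; this is Zariski open in $X$, and by \eqref{taulimit4} it is nonempty precisely when $I$ contributes to the max defining $\tau_M(V)$. Because $X$ is irreducible, any finite collection of nonempty sets among the $\mathcal U_{V,I}$ has nonempty intersection. Apply this to the (at most four times $\binom{d}{\cdot}$-many) indices that are relevant for $V \in \{W_1, W_2, W_1 \cap W_2, W_1 + W_2\}$ and pick $L_0$ in the common intersection together with the dense open set where $L_0$ is invertible; then $\tau(L_0 V) = \tau_M(V)$ for all four subspaces, and the result follows from the pointwise case. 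The only routine points left are the linear-algebra rank inequality displayed above and the identity $\tau(LV) = A_d\dim V + \sum_{i=1}^{d}(A_i - A_{i+1})\dim(LV \cap \mathcal F_i)$ (with $A_{d+1} := A_d$, say), both of which are direct computations from \eqref{taulimit2bis}.
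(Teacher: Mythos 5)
Your reduction to a single matrix $L$ is exactly the paper's first step: using irreducibility of $Zar(M)$, the locus where some Pl\"ucker coordinate $(L\bv)_I$ realizing the maximum in \eqref{taulimit4} vanishes is a proper closed subset, so one generic $L$ realizes $\tau_M$ simultaneously on $W_1$, $W_2$, $W_1\cap W_2$, $W_1+W_2$; that part is fine. Where the proposal diverges from the paper is the pointwise case: the paper deduces submodularity of $\tau_L$ from the exterior-power norm inequality $\|a_tL(\bu\wedge\bw_1')\|\,\|a_tL(\bu\wedge\bw_2')\| \ge \|a_tL\bu\|\,\|a_tL(\bu\wedge\bw_1'\wedge\bw_2')\|$ together with the limit \eqref{taulimit}, whereas you argue combinatorially through the flag $\mathcal{F}_\bullet$. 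That route can work, but your key identity is wrong as stated, and with it your telescoping proves the reverse inequality. Indeed, take $d=2$, $A_1>A_2$, $L=\mathrm{Id}$, $V=\langle e_2\rangle$: then $\dim(V\cap\mathcal F_1)=\dim(V\cap\mathcal F_2)=1$ and your formula gives $A_2\dim V+(A_1-A_2)\cdot 1=A_1$, while $\tau(V)=A_2$. The correct Abel summation of $\tau(LV)=\sum_i A_i\bigl(\dim(LV\cap\mathcal F_i)-\dim(LV\cap\mathcal F_{i+1})\bigr)$ is
\[ \tau(LV)=A_1\dim V-\sum_{i=2}^{d}(A_{i-1}-A_i)\,\dim(LV\cap\mathcal F_i), \]
with \emph{nonpositive} coefficients in front of the intersection dimensions (intersecting deeply with $\mathcal F_i$ means lying in slow directions). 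With your stated identity the weights $A_i-A_{i+1}\ge 0$ applied to your (correct) rank inequality $\dim(LW_1\cap\mathcal F_i)+\dim(LW_2\cap\mathcal F_i)\le\dim(L(W_1\cap W_2)\cap\mathcal F_i)+\dim(L(W_1+W_2)\cap\mathcal F_i)$ sum to $\tau(LW_1)+\tau(LW_2)\le\tau(L(W_1\cap W_2))+\tau(L(W_1+W_2))$, i.e.\ supermodularity, not \eqref{subm}.

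The fix is local: use the corrected identity, note that the $A_1$-terms cancel by $\dim W_1+\dim W_2=\dim(W_1\cap W_2)+\dim(W_1+W_2)$, and observe that multiplying the rank inequality by the nonpositive coefficients $-(A_{i-1}-A_i)$ reverses it, so summing over $i$ gives exactly $\tau(LW_1)+\tau(LW_2)\ge\tau(L(W_1\cap W_2))+\tau(L(W_1+W_2))$. With that sign repair your argument is a valid, more combinatorial alternative to the paper's wedge-product inequality; as written, however, the central step does not yield the claimed conclusion.
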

\begin{proof}
Given a subspace $W$ in $\R^d$, it is clear from $(\ref{taulimit4})$ that the Zariski closure of  $\{L \in M, \tau_L(W)<\tau_M(W)\}$ is a proper subset of $Zar(M)$.
By irreducibility, we may choose $L$ in $M$ such that $\tau_M=\tau_L$ on all four subspaces $W_1$, $W_2$, $W_1\cap W_2$ and $W_1+W_2$.
It is therefore enough to prove the lemma in the case $M=\{L\}$.
Now let $\bu$ be a vector representing $U=W_1\cap W_2$ in some exterior power of $\R^d$.
Let also $\bw_1'$ and $\bw_2'$ be such that $\bu\wedge\bw_1'$ and $\bu\wedge\bw_2'$ represent $W_1$ and $W_2$, respectively.
The subspace $W_1+W_2$ is then represented by $\bu\wedge\bw_1'\wedge\bw_2'$, and moreover, for every $t$,
\[ \|a_tL(\bu\wedge\bw_1')\|\|a_tL(\bu\wedge\bw_2')\| \geq \|a_tL\bu\|\|a_tL(\bu\wedge\bw_1'\wedge\bw_2')\|.\]
Together with formula \eqref{taulimit}, this shows that $\tau_L$ is submodular. Note that this is compatible with the convention $\tau_M(\{0\})=0$. 
\end{proof}

\subsection{Harder-Narasimhan filtration}\label{h-n-sec}

Submodular functions on partially ordered sets give rise to a ``slope formalism'' as in \cite{stuhler, grayson, faltings-wustholz, bost-bourbaki, randriam}. This is well known. In this paragraph we recall the main facts we need and for the reader's convenience we give short proofs.  The key to them is the following submodularity lemma, which in implicit form goes back at least to Stuhler \cite{stuhler} and Grayson \cite{grayson} in the context of Euclidean sublattices  and their covolume and which we rediscovered in \cite{abrs2} in the present context (subspaces and their expansion rate). Let $k$ be a field and  $\Grass(k^d)$ the Grassmannian of non-zero subspaces of $k^d$. Let  $\phi: \Grass(k^d) \to \R$ be a submodular function, that is satisfying $(\ref{subm})$ with $\phi$ in place of $\tau_M$. 

\begin{lemma}[Submodularity lemma] There is a subspace $V_\phi \in \Grass(k^d)$  achieving the infimum  of $\frac{\phi(V)-\phi(0)}{\dim V}$, $V \in \Grass(k^d)$, and containing all other such subspaces. 
\end{lemma}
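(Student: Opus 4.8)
The plan is to prove existence and uniqueness of a maximal minimizer by exploiting submodularity exactly as in the classical Harder--Narasimhan/Grayson argument. Write $\mu(V) := \frac{\phi(V) - \phi(0)}{\dim V}$ for $V \in \Grass(k^d)$ (and set $\mu(0) = +\infty$ by convention, so the inequalities below degenerate harmlessly). Let $\mu_{\min} := \inf_{V} \mu(V)$, which is a finite infimum over the nonempty set $\Grass(k^d)$; note $\mu_{\min}$ is actually attained because $\phi$ takes only finitely many values on the finite set of dimensions — or, if one does not want to invoke finiteness of the value set, because one can normalize $\phi(0) = 0$ and observe $\phi$ takes values in a discrete-looking set after clearing denominators; in our application $\phi = \tau_M$ takes values in the finite set of exterior-power eigenvalues, so attainment is immediate. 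So fix $W_1, W_2$ both achieving $\mu(W_i) = \mu_{\min}$.

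The key step is to show the set of minimizers is closed under sums. Apply the submodularity inequality $\phi(W_1) + \phi(W_2) \geq \phi(W_1 \cap W_2) + \phi(W_1 + W_2)$. Subtracting $2\phi(0)$ and writing $d_i = \dim W_i$, $d_\cap = \dim(W_1 \cap W_2)$, $d_+ = \dim(W_1 + W_2)$, with $d_\cap + d_+ = d_1 + d_2$, this reads
\[
\mu_{\min} d_1 + \mu_{\min} d_2 \geq (\phi(W_1\cap W_2) - \phi(0)) + (\phi(W_1+W_2) - \phi(0)) \geq \mu_{\min} d_\cap + \mu_{\min} d_+ = \mu_{\min}(d_1 + d_2),
\]
using $\mu(W_1 \cap W_2) \geq \mu_{\min}$ and $\mu(W_1 + W_2) \geq \mu_{\min}$ (valid also when $W_1 \cap W_2 = 0$ by the convention). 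Hence both inequalities are equalities, so in particular $\phi(W_1 + W_2) - \phi(0) = \mu_{\min} d_+$, i.e. $W_1 + W_2$ is again a minimizer. Therefore I would let $V_\phi$ be the sum of all minimizers: since $\Grass(k^d)$ is finite (or at least the subspaces involved span a finite-dimensional space so only finitely many dimensions occur, and the sum stabilizes), this is a finite sum of minimizers, hence by induction on the number of summands it is itself a minimizer, and by construction it contains every minimizer. That gives existence of $V_\phi$ with the stated maximality property.

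The only genuine subtlety — and the step I would be most careful about — is the attainment of the infimum: over an infinite field $k$ the Grassmannian is infinite, so one must justify that $\inf_V \mu(V)$ is a minimum. In the generality of the lemma as stated, this follows because a submodular function on $\Grass(k^d)$ is determined by its restriction to the (finite) lattice of subspaces generated under $\cap$ and $+$ by any finite configuration, but cleanly one should note that $\mu$ takes at most finitely many values: $\dim V \in \{1, \dots, d\}$, and for the application $\phi = \tau_M$ the numerator lies in the finite set $\{I(a) : I \subset [1,d]\}$, so $\mu$ has finite range and the infimum is attained. I would state this finiteness observation explicitly, then run the submodularity computation above, which is short and mechanical. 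Everything else is formal lattice-theoretic bookkeeping.
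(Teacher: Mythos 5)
Your main computation is sound: once the infimum $\mu_{\min}$ is known to be finite and attained, submodularity does force the set of minimizers to be closed under sums (your equality argument is correct, including the degenerate case $W_1\cap W_2=0$), and the sum of all minimizers, which reduces to a finite sum because dimensions are bounded by $d$, is then the required $V_\phi$. This is a genuinely different route from the paper, which never assumes attainment: there one first proves that $\phi$ is bounded below (via submodularity, using a maximal-dimension sequence $V_n$ with $\phi(V_n)\to-\infty$ and a fixed line $L\not\subset V_n$), then stratifies the infimum by dimension ($I_k$ = infimum over $\dim V\ge k$, with $k_0$ maximal such that $I_{k_0}=I$) and works with $\eps$-approximate minimizers, showing that any subspace $Z$ with $\phi(Z)\le (I+\eps)\dim Z$ is contained in a single subspace of dimension $k_0$; attainment comes out of the argument rather than going into it.

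The genuine gap is precisely the step you flagged and then dismissed: for the lemma as stated, $\phi$ is an arbitrary submodular function on $\Grass(k^d)$, and neither the finiteness of $\mu_{\min}$ nor its attainment is free. Your justification that ``$\mu$ takes at most finitely many values'' is not a hypothesis of the lemma and is false in general: over an infinite field a submodular function can have infinite range (for $d=2$ take $\phi(0)=0$, $\phi(k^2)=2$ and arbitrary values in the interval $(1,2)$ on lines; submodularity only requires $\phi(L_1)+\phi(L_2)\ge 2$ for distinct lines). Your fallback — that $\phi$ is determined by its restriction to the finite lattice generated by a finite configuration — is also not a proof: sublattices of the subspace lattice generated by four or more subspaces can be infinite, and in any case the infimum runs over the whole infinite Grassmannian, so no single finite configuration controls it. Finiteness of the infimum needs the boundedness-below argument you skipped, and attainment needs either the paper's approximate-minimizer scheme or a separate argument that submodularity forces the infimum to be achieved. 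For the paper's actual application $\phi=\tau_M$, whose values lie in the finite set of numbers $I(a)$, your proof is complete and arguably cleaner; as a proof of the lemma in its stated generality it is incomplete at exactly the point where the paper's proof does its real work.
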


\begin{proof}Let $I$ be that infimum. Without loss of generality, up to changing $\phi$ into $\phi-\phi(0)$, we may assume that $\phi(0)=0$. We begin by observing that $\phi$ is bounded below: if $(V_n)_n$ is a sequence of distinct subspaces of maximal dimension with $\phi(V_n) \to -\infty$, pick a fixed line $L$ with $L\not \subset V_n$ for infinitely many $V_n$; by submodularity $\inf_n\phi(V_n+L)=-\infty$, contradicting the maximality of $\dim V_n$.  So $I$ is finite. For $k \ge 1$, we set $I_k$ to be the same infimum restricted to those subspaces $V$ with $\dim V \ge k$. There is a maximal $k_0$ such that $I=I_{k_0}$. If $k_0=d$, then we can take $V_\phi=k^d$ and there is nothing to prove. Otherwise let $\eps>0$ so that $I_{k_0+1}> I + 2\eps$. If a subspace $W$ satisfies
\begin{equation}\tag{$\ast$}\label{star}
\phi(W) \leq (I+\eps) \dim W,
\end{equation}
then $\dim W \leq k_0$. By definition there is  such a subspace with $\dim V_\phi \ge k_0$, call it  $V_\phi$. If $Z$ is another subspace with $\eqref{star}$, then \begin{align*}\phi(Z+V_\phi) &\leq \phi(Z)+\phi(V_\phi) - \phi(Z \cap V_\phi) \\ &\leq  (I+\eps) (\dim Z+\dim V_\phi) - I \dim (Z \cap V_\phi) \\ &\leq  I \dim (Z+ V_\phi) +2\eps \dim (Z + V_\phi) \end{align*} which forces $\dim (Z+V_\phi) \leq k_0$, and hence $Z \leq V_\phi$, as desired.
\end{proof}


\begin{definition}[Semistability] We say that $k^d$ is \emph{semistable} with respect to $\phi$ if  $V_\phi=k^d$.
\end{definition}

\begin{definition}[Grayson polygon] Let $P_\phi:[0,d] \to \R$ be the convex  piecewise linear function that is the supremum of all linear functions whose graph in $[0,d]\times \R$ lies below all points $(\dim V, \phi(V))$, $V \in \Grass(k^d) \cup \{0\}$. Its graph is called the \emph{Grayson polygon} of $\phi$. 
\end{definition}

Let $(d_i,f_i)$, $i=0,\ldots, h$ be the vertices of the Grayson polygon with $d_0=0$ and $d_h=d$, that is the angular points, where the slope changes, i.e.  for $i=1,\ldots h-1$,
$$s_{i} < s_{i+1},$$ where $$s_{i}:=\frac{f_i-f_{i-1}}{d_i-d_{i-1}}.$$

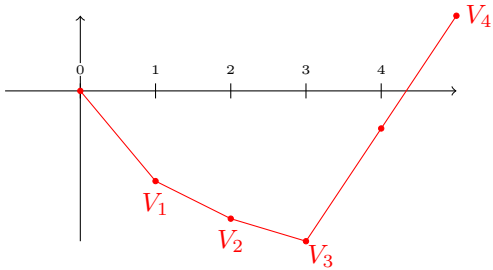
\begin{figure}[h]
\begin{center}
\begin{tikzpicture}
\draw[->] (-1,0) -- (5,0);
\draw (0,-2) -- (0,0.2);
\draw[->] (0,0.37) -- (0,1);
\foreach \x in {0,...,4}
{
\draw (\x,-0.1) -- (\x,0.1) node[anchor=south] {\tiny{\x}};
}
\draw[red,thin] (0,0) -- (1,-1.2) -- (2,-1.7) -- (3,-2) -- (4,-.5) -- (5,1);
\filldraw[red] (0,0) circle (1pt);
\filldraw[red] (1,-1.2) circle (1pt);
\filldraw[red] (1,-1.5) node {$V_1$};
\filldraw[red] (2,-1.7) circle (1pt);
\filldraw[red] (2,-2)  node {$V_2$};
\filldraw[red] (3,-2) circle (1pt) ;
\filldraw[red] (3.2,-2.2)  node {$V_3$};
\filldraw[red] (5,1) circle (1pt);
\filldraw[red] (5.3,1)  node {$V_4$};
\filldraw[red] (4,-.5) circle (1pt);
\end{tikzpicture}
\caption{a Grayson polygon}
\label{hn}
\end{center}
\end{figure}

  The main result is the following:

\begin{proposition}[Harder-Narasimhan filtration]\label{h-n}  For each $i=0,\ldots,h$, there is a unique $k$-subspace $V_i \leq k^d$ such that $\dim V_i=d_i$ and $\phi(V_i)=f_i$. Moreover the subspaces $V_i$ are nested, i.e. $0=V_0<V_1<\dots<V_h=k^d$, forming the so-called \emph{Harder-Narasimhan filtration} of $\phi$. 
\end{proposition}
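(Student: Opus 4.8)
The plan is to induct on $d=\dim_k k^d$, peeling off the first step of the filtration with the Submodularity Lemma and then passing to a quotient. After replacing $\phi$ by $\phi-\phi(0)$ we may assume $\phi(0)=0$. The Submodularity Lemma produces the largest subspace $V_1:=V_\phi$ realizing $s_1:=\inf_{V\neq 0}\phi(V)/\dim V$; in particular $\phi(V)\geq s_1\dim V$ for every $V$, with equality forcing $V\subseteq V_1$. The first thing I would check is that this already determines the left part of the Grayson polygon: the line $x\mapsto s_1x$ lies below all points $(\dim V,\phi(V))$, hence below $P_\phi$, while $(\dim V_1,\phi(V_1))=(\dim V_1,s_1\dim V_1)$ is such a point, so $P_\phi(x)=s_1x$ on $[0,\dim V_1]$. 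If $V_1\neq k^d$ then $P_\phi$ genuinely bends at $\dim V_1$: were $P_\phi$ linear of slope $s_1$ on $[0,\dim V_1+1]$, the point $(\dim V_1+1, s_1(\dim V_1+1))$ would be a convex combination of points $(\dim V_k,\phi(V_k))$, and $\phi(V_k)\geq s_1\dim V_k$ would force each such $V_k$ to be a minimizer, hence contained in $V_1$, hence of dimension $\leq\dim V_1$, a contradiction. Thus $d_1=\dim V_1$, $f_1=\phi(V_1)$, and $V_1$ is the \emph{unique} subspace with these invariants (a competitor is a minimizer, so lies in $V_1$, so equals it by dimension). This also settles the semistable case $V_1=k^d$ ($h=1$), so from now on $0<d_1<d$.

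The heart of the matter — and the step I expect to be the main obstacle — is the following claim: \emph{every subspace $W$ with $\dim W\geq d_1$ and $\phi(W)=P_\phi(\dim W)$ contains $V_1$}. To prove it I would combine submodularity, $\phi(W+V_1)+\phi(W\cap V_1)\leq\phi(W)+\phi(V_1)$, with the fact that $P_\phi$ lies below the points and is convex. Since $\dim(W\cap V_1)\leq d_1\leq\dim W\leq\dim(W+V_1)$ and $\dim(W\cap V_1)+\dim(W+V_1)=\dim W+d_1$, convexity of $P_\phi$ gives $P_\phi(\dim(W\cap V_1))+P_\phi(\dim(W+V_1))\geq P_\phi(\dim W)+P_\phi(d_1)=\phi(W)+\phi(V_1)$. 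Chaining this with $\phi\geq P_\phi$ on $W\cap V_1$ and $W+V_1$ forces equality everywhere; in particular we are in the equality case of the convexity inequality, which — because $P_\phi$ has a true corner at $d_1$ while it is affine of slope $s_1$ just to the left of $d_1$ — can only occur when $\dim(W\cap V_1)=d_1$, i.e.\ $V_1\subseteq W$.

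Now I would run the induction. Let $\pi:k^d\to k^d/V_1$ and define $\bar\phi$ on $\Grass(k^d/V_1)$ by $\bar\phi(\bar W):=\phi(\pi^{-1}\bar W)-f_1$, with $\bar\phi(0)=0$; since $\pi^{-1}$ commutes with intersections and sums of subspaces containing $V_1$, $\bar\phi$ is submodular, and $\dim(k^d/V_1)=d-d_1<d$, so the inductive hypothesis gives its Harder–Narasimhan filtration $0=\bar V_0<\bar V_1<\dots<\bar V_{\bar h}=k^d/V_1$ realizing uniquely the vertices $(\bar d_j,\bar f_j)$ of $P_{\bar\phi}$. Set $V_{1+j}:=\pi^{-1}(\bar V_j)$. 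I would then check that the convex piecewise-linear function $Q$ equal to $s_1x$ on $[0,d_1]$ and to $f_1+P_{\bar\phi}(x-d_1)$ on $[d_1,d]$ lies below every point $(\dim V,\phi(V))$ — for $\dim V\leq d_1$ this is $\phi(V)\geq s_1\dim V$, and for $\dim V>d_1$ one applies the chord bound for $P_{\bar\phi}$ to $\overline{V+V_1}$ and again invokes $\phi(V)\geq s_1\dim V$ — so $Q\leq P_\phi$, while $Q$ takes the value $\phi(V_i)$ at each of its corners $d_i$, forcing $Q=P_\phi$. Its corners are genuine because the slopes $s_1<s_2$ (the first slope of $P_{\bar\phi}$) and the strictly increasing slopes of $P_{\bar\phi}$ are strictly increasing throughout; hence the vertices of $P_\phi$ are exactly $(d_1,f_1)$ and $(d_1+\bar d_j, f_1+\bar f_j)$, realized by the nested subspaces $V_1\subset V_2\subset\dots\subset k^d$. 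For uniqueness: given $W$ with $\dim W=d_i$, $\phi(W)=f_i$ and $i\geq 2$, the key claim yields $V_1\subseteq W$, so $W/V_1$ realizes the vertex $(\bar d_{i-1},\bar f_{i-1})$ of $\bar\phi$, and inductive uniqueness gives $W/V_1=\bar V_{i-1}$, i.e.\ $W=V_i$; the cases $i\in\{0,1\}$ were handled above.
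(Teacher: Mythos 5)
Your overall architecture — peel off $V_1$ with the Submodularity Lemma, pass to $k^d/V_1$, glue the polygons, and get uniqueness from the "key claim" — is sound, and it is essentially the paper's iterative construction recast as an induction on dimension. The one point where your justification does not hold up is the strict slope increase at the junction: the claim that $P_\phi$ has a genuine corner at $d_1$, equivalently $s_1<\bar s_1$ where $\bar s_1$ is the first slope of $P_{\bar\phi}$ (your $s_2$). Your bend argument assumes that the value $P_\phi(d_1+1)$ is realized as a convex combination of actual points $(\dim V_k,\phi(V_k))$. For a general real-valued submodular $\phi$ the Grassmannian is infinite and the infimum of $\phi$ in a fixed dimension need not be attained, so $P_\phi$ (a supremum of affine minorants) need not touch any point; the argument only yields subspaces with $\phi(V_k)\le s_1\dim V_k+\eps$, and such approximate minimizers need not lie in $V_1$. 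This is not a cosmetic issue, because $s_1<\bar s_1$ is then used, unproved, in at least two further places: it is what makes $Q$ convex at $d_1$ (and convexity is exactly what lets you pass from ``$Q$ lies below every point'' to $Q\le P_\phi$), and your check that $Q$ lies below $(\dim V,\phi(V))$ when $\dim V>d_1$ and $V\not\supseteq V_1$ also needs every slope of $P_{\bar\phi}$ to be at least $s_1$ once you chain submodularity with the chord bound for $\overline{V+V_1}$ and the bound $\phi(V\cap V_1)\ge s_1\dim(V\cap V_1)$.

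The gap is local and fixable with an ingredient you already quoted. By the inductive hypothesis (or the Submodularity Lemma applied to $\bar\phi$), $\bar s_1$ is attained: there is $W\supsetneq V_1$ with $\bar s_1=\frac{\phi(W)-f_1}{\dim W-d_1}$. If $\bar s_1\le s_1$, then $\phi(W)\le s_1\dim W$, so $W$ realizes the infimum defining $s_1$, and the maximality clause of the Submodularity Lemma forces $W\subseteq V_1$, a contradiction; hence $s_1<\bar s_1$, the corner at $d_1$ is genuine, and the rest of your argument (the key claim, $Q=P_\phi$, nestedness and uniqueness) goes through. This is precisely how the paper secures $\ell_i<\ell_{i+1}$ ``by construction''; with that repair, your proof is a dimension-induction repackaging of the paper's argument rather than a genuinely different route.
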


In particular, we see that $k^d$ is semistable if and only if its Harder-Narasimhan filtration is the trivial one $\{0\} < k^d$.

\begin{remark} \label{hn1} Note that given any $k$-subspace $V \leq k^d$ the function $\phi_V(\overline{W}):= \phi(W)-\phi(V)$ defined on the quotient $k^d/V$, where $\overline{W}=W/V$ for any $k$-subspace $W$ containing $V$ is submodular on $\Grass(k^d/V)$. It is clear from the proposition that  $V_i/V_{i-1}$ is semistable with respect to $\phi_{V_{i-1}}$ for $i=1,\dots,h$, and that $\{V_i/V_1\}_{i \ge 1}$ is the Harder-Narasimhan filtration of $V/V_1$ with respect to $\phi_{V_1}$.
\end{remark}

\begin{proof} The existence and uniqueness of $V_1$ is exactly what the submodularity lemma tells us. Suppose $V_i$  has been defined. We may apply the submodularity lemma again to  $\phi_{V_i}$ on the quotient $k^d/V_i$ and thus obtain a subspace $V_{i+1}$ containing $V_i$ strictly and such that the function $(\phi(V)-\phi(V_i))/(\dim V - \dim V_i)$ reaches  at $V_{i+1}$ its unique minimum $\ell_{i+1}$ among subspaces $V$ containing $V_i$. By construction $\ell_{i}<\ell_{i+1}$ for each $i \ge 1$. 

We now need to show that the Grayson polygon coincides with the polygon $P$ drawn out of the points $(\dim V_i, \phi(V_i))$. In other words we have to prove that if $V$ is a subspace of $k^d$ and $i$ is such that $\dim V_i \leq \dim V < \dim V_{i+1}$, then \begin{equation}\label{toshow}\phi(V)- \phi(V_i) \geq \ell_{i+1} (\dim V - \dim V_i).\end{equation}
If $V_i \leq V$, this is by definition of $V_{i+1}$. Otherwise $V \cap V_i < V_i$ and by induction we may assume that $(\dim (V \cap V_i), \phi(V \cap V_i))$ lies above $P$. So \begin{equation}\label{toshow2}\phi(V_i) - \phi(V \cap V_i) \leq \ell_i(\dim V_i - \dim (V \cap V_i)).\end{equation} Moreover, again by definition of $V_{i+1}$ we have \begin{equation}\label{toshow3} \phi(V_i+V) - \phi(V_i) \geq \ell_{i+1}(\dim(V_i+V) - \dim V_i).\end{equation} On the other hand $\phi$ is submodular, so $\phi(V_i + V) +\phi(V_i \cap V) \leq \phi(V_i)+\phi(V)$. Combining this with $(\ref{toshow2})$ and $(\ref{toshow3})$ we obtain:
\begin{equation}\label{toshow4}\phi(V) \geq \phi(V_i) + \ell_{i+1}(\dim(V) - \dim(V_i \cap V)) - \ell_i(\dim V_i - \dim(V \cap V_i)).\end{equation} But $\ell_{i+1} \ge \ell_i$. So $(\ref{toshow})$ follows. 

This shows the existence of the $V_i$ and the fact that they are nested. To see the uniqueness note that if $\dim V=\dim V_i$ and $\phi(V)=\phi(V_i)$ but $V \neq V_i$, then $\ell_i \ge \ell_{i+1}$ in view of $(\ref{toshow4})$, which is a contradiction.
\end{proof}

In the sequel, we apply this general theory by taking $k=\Q$ and $\phi(V)$ the expansion rate $\tau_M(V)$ defined in $(\ref{taulimit3})$ on the grassmannian of rational subspaces. The above definition of semistability reads:

\begin{definition}\label{semi}
A non-zero rational subspace $V$ in $\R^d$ will be called $M$-\emph{semistable} with respect to $a=(a_t)_{t \ge 0}$ if for every rational subspace $W\leq V$,
\[ \frac{\tau_M(W)}{\dim W} \geq \frac{\tau_M(V)}{\dim V}.\]
\end{definition}

Similarly this yields the notions of Grayson polygon and Harder-Narasimhan filtration of $M$ with respect to $a$. 

\begin{remark}[unstable subspace] In the case when $\det a=1$, a subspace $V$ with $\tau(V)<0$ corresponds to a point $\bv$ in some $\wedge^k \R^d$, which is \emph{unstable with respect to $a$}  in the terminology of geometric invariant theory, i.e. its $a$-orbit contains $0$ in its closure. So $\R^d$ is $M$-semistable if and only there are no unstable subspaces in the full grassmannian.
\end{remark}

Next we make a remark about the dependence of the Harder-Narasimhan filtration with respect to the choice of one-parameter semigroup. It is easy to see that the Grayson polygon depends continuously on $a$. This is not so for the filtration, because new nodes can appear under small deformations, but the following lemma shows that  the subspaces involved remain confined to a fixed finite family. Let $b(n)$ be the ordered Bell number, that is the number of weak orderings (i.e. orderings with ties) on a set with $n$ elements. 

\begin{lemma}\label{values} Let $M \subset \GL_d(\R)$ with irreducible Zariski closure. There is a finite set $S_M$ of rational subspaces of $\R^d$ with $|S_M| \leq b(2^d)$ such that, as $a=(a_t)_{t \ge 0}$ varies among all one-parameter diagonal semigroups of $\GL_d(\R)$, the subspaces $V_i(a)$ arising in the Harder-Narasimhan filtration all  belong to $S_M$. \end{lemma}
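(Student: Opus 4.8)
The plan is to pin down each Harder--Narasimhan subspace by a finite combinatorial datum attached to $M$ alone, and then bound the number of possible values of that datum. To a non-zero rational subspace $V\le\Q^d$ with Plücker vector $\bv\in\wedge^{\dim V}\R^d$, I associate the family
\[ \mathcal I_M(V):=\{I\subseteq\{1,\dots,d\}\ :\ |I|=\dim V,\ (L\bv)_I\neq 0\ \text{for some}\ L\in M\},\]
a non-empty collection of equal-cardinality subsets of $\{1,\dots,d\}$ which depends only on $Zar(M)$ and, crucially, \emph{not} on the one-parameter semigroup $a$. The reason this is the right invariant is formula $(\ref{taulimit4})$: if $a=\diag(e^{A_1t},\dots,e^{A_dt})$, then $\tau_M(V)=\max\{\sum_{i\in I}A_i\ :\ I\in\mathcal I_M(V)\}$, so any two rational subspaces of the same dimension sharing the same family $\mathcal I_M$ have identical expansion rate $\tau_M$, for \emph{every} choice of $a$.

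Next I fix $a$ and invoke the slope formalism. Since $Zar(M)$ is irreducible, $\tau_M$ is submodular by Lemma~$\ref{tausm}$, so Proposition~$\ref{h-n}$ applies and produces the Harder--Narasimhan subspaces $V_i(a)$; that proposition states that $V_i(a)$ is the \emph{unique} rational subspace of dimension $d_i$ whose expansion rate equals $f_i$. Combining this uniqueness with the observation of the previous paragraph, I get that $V_i(a)$ is in fact the unique rational subspace $V$ with $\mathcal I_M(V)=\mathcal I_M(V_i(a))$ --- any other candidate of the same dimension with the same family would have the same expansion rate $f_i$ and hence coincide with $V_i(a)$. Thus the purely combinatorial family $\mathcal I_M(V_i(a))$ \emph{determines} the subspace $V_i(a)$.

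I then set $S_M$ to be the ($a$-independent) set of rational subspaces $V\le\Q^d$ that are the unique rational subspace having their particular value of $\mathcal I_M(\cdot)$ (this set contains $0$ and $\Q^d$ trivially). By the previous step $V_i(a)\in S_M$ for all $a$ and all $i$, and $V\mapsto\mathcal I_M(V)$ is injective on $S_M$ by construction; hence $|S_M|$ is at most the number of non-empty families of equal-cardinality subsets of $\{1,\dots,d\}$, i.e. $\sum_{p=0}^d(2^{\binom dp}-1)$, which a routine estimate bounds by the ordered Bell number $b(2^d)$.

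I do not anticipate a genuine obstacle: the entire point is the single observation that a Harder--Narasimhan subspace is controlled by the combinatorial family $\mathcal I_M(\cdot)$, which falls out immediately from formula $(\ref{taulimit4})$ together with the uniqueness clause of Proposition~$\ref{h-n}$. The only things to be careful about are to work with \emph{rational} subspaces throughout (so that Proposition~$\ref{h-n}$'s uniqueness is available) and to keep firmly in mind that $\mathcal I_M(\cdot)$, unlike $\tau_M$, carries no dependence on $a$ --- so that one fixed finite set $S_M$ works simultaneously for all one-parameter semigroups.
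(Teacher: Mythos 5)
Your proof is correct, and it takes a genuinely different route from the paper's. You attach to each rational subspace $V$ the $a$-independent ``support'' family $\mathcal{I}_M(V)$ of coordinate subsets, observe via formula $(\ref{taulimit4})$ that this family determines $\tau_M(V)$ for every flow $a$, and then invoke the uniqueness clause of Proposition~\ref{h-n} to conclude that each Harder--Narasimhan subspace is the \emph{unique} rational subspace carrying its family; counting families gives $\sum_{p=0}^d\bigl(2^{\binom{d}{p}}-1\bigr)$, which is indeed below $b(2^d)$ (and in fact substantially better, roughly $2^{2^{d-1}}$ versus the super-exponential $b(2^d)$). The paper instead shows that the \emph{entire filtration} depends on $a$ only through the weak ordering of the averages $\widehat{I}(a)=\frac{1}{|I|}\sum_{i\in I}A_i$ over $I\subset[d]$ --- using that every slope of the polygon equals some $\widehat{I}(a)$, which requires $(\ref{taulimit2bis})$ and the generic-$L$ reduction from Lemma~\ref{tausm} --- and then counts weak orderings, of which there are $b(2^d)$. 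Your argument is shorter and yields a sharper cardinality bound; the paper's argument buys extra structural information that is exploited immediately afterwards, namely that the slopes $\Lambda_i$ are piecewise linear in $\log a$ and linear on each cell cut out by the hyperplanes $\widehat{I}(a)=\widehat{I'}(a)$, a fact your parametrization by families does not directly give.
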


\begin{proof}For $I \subset [d]$, let $\widehat{I}(a)=\frac{1}{|I|}\sum_{i \in I} A_i$, where $a_t=\diag(e^{A_1t},\ldots,e^{A_dt})$. We claim that the entire Harder-Narasimhan filtration of $M$ depends on $a$ only via the ordering of the various $\widehat{I}(a)$ for $I \subset [d]$. Namely if $\widehat{I}(a)$ and $\widehat{I}(a')$ define the same weak ordering on the family of subsets of $[d]$, then the filtrations coincide.  To see the claim note that every slope $(\tau_M(V)- \tau_M(W))/(\dim V-\dim W)$ for $W \leq V$ is equal to $\widehat{I}(a)$ for some $I$ (because $I_W \subseteq I_V$ as follows from  $(\ref{taulimit2bis})$ and $\tau_M$ can be replaced by $\tau_L$ for some fixed $L$ as in the proof of Lemma \ref{tausm}), and Proposition \ref{h-n} tells us that  $V_i(a)$ is defined as the unique solution to an extremal problem involving the comparison of slopes. So only their order matters. Since there are at most $b(2^d)$ possible orders, we are done.
\end{proof}

We also see from this proof that the slopes $\Lambda_i$  are  continuous and piecewise linear in $\log a$ and actually linear on each one of the cells cut out by the hyperplanes $\widehat{I}(a)=\widehat{I'}(a)$ for $I,I' \subset [1,d]$.

\begin{remark} The ordered Bell number $b(n)$ grows super-exponentially with $n$. This gives a rather poor bound on the number of exceptional subspaces in Theorem \ref{subspace-main}, especially in view of Schmidt's bound $d^{2d^2}$ from \cite {schmidt-refinement}. A more refined argument, which we do not include here and is based on the study of the set of permutations arising from the Schubert closure of $M$ (see \S \ref{schubert-sec}) allows to improve this to $(2d)^{d}$.  
\end{remark}

\subsection{Dynamics of diagonal flows}
\label{asm}

We now describe the dynamical ingredient of the proof. Using the quantitative non-divergence estimates (see Theorem \ref{klw} below), Kleinbock  showed in \cite{kleinbock_dichotomy} the existence of a well-defined almost sure diophantine exponent for analytic manifolds.  As described in our previous work \cite{abrs2} with  Menny Aka and Lior Rosenzweig  this holds for more general measures and the exponent actually depends only on the Zariski closure of the support of the measure, a property called \emph{inheritance} in this paper, because the measure inherits its exponent from the Zariski closure of its support. We will need the following version of these results:

\begin{theorem}[Inheritance principle]
\label{rate}
Let $(a_t)_{t \ge 0}$ be a one-parameter diagonal semigroup in $\GL_d(\R)$ and for $L \in\GL_d(\R)$, set
\[ \mu_k(L) := \liminf_{t\to+\infty} \sum_{j \leq k} \frac{1}{t} \log\lambda_j(a_tL\Z^d).\]
Let $U \subset \R^n$ a connected open set, $\phi:U \to \GL_d(\R)$ an analytic map, and $\mu$ the image of the Lebesgue measure under $\phi$.  Let $M:=\phi(U)$ and $Zar(M)$ its Zariski closure in $\GL_d(\R)$. Then for $\mu$-almost every $L$ in $M$ and each $k=1,\ldots,d$,
\[ \mu_k(L) = \sup_{L'\in Zar(M)} \mu_k(L').\]
\end{theorem}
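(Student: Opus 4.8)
The plan is to prove the inheritance principle (Theorem~\ref{rate}) by combining the Kleinbock--Margulis quantitative non-divergence estimate with an induction on the dimension of $Zar(M)$, following the strategy of \cite{abrs2}. First I would record the elementary inequality $\mu_k(L) \le \sup_{L' \in Zar(M)} \mu_k(L')$, which is essentially trivial since $M \subseteq Zar(M)$ (after checking that $\mu_k$ is a Borel function of $L$, so the supremum is meaningful). The content is the reverse inequality: for $\mu$-almost every $L$, the liminf of the sum of the first $k$ normalized log-minima is at least as large as the supremum over the whole Zariski closure. Note that $\sum_{j \le k}\log\lambda_j(a_tL\Z^d)$ is, up to a bounded additive error, $\log$ of the first minimum of the lattice $\wedge^k(a_tL\Z^d) = (\wedge^k a_t)(\wedge^k L)(\wedge^k\Z^d)$ in $\wedge^k\R^d$ (using that $\lambda_1\cdots\lambda_k \asymp \lambda_1(\wedge^k\Delta)$ for the induced lattice by Minkowski's second theorem); this reduces the statement about the sum of $k$ minima to a statement about the \emph{first} minimum of an induced flow $\wedge^k a_t$ acting on the $\binom{d}{k}$-dimensional space, so it suffices to treat $\mu_1$ on a general diagonal flow, applied to the Plücker-type embeddings $L \mapsto \wedge^k L$.

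For the first-minimum statement, the key tool is Theorem~\ref{klw} (the quantitative non-divergence estimate of Kleinbock--Margulis, in the form valid for good/friendly measures): it says that the measure of the set of $L \in M$ for which the orbit $a_tL\Z^d$ enters the $\delta$-neighborhood of the cusp at some time in $[0,T]$ is controlled by a power of $\delta$, \emph{uniformly}, provided the relevant family of functions $t \mapsto \|a_t L \bv\|$ (for $\bv$ ranging over primitive integer vectors, or primitive vectors in exterior powers) is $(C,\alpha)$-good and non-contracting on $U$ with parameters depending only on $\phi$. The crucial point, which is where the Zariski closure enters, is that these good-ness parameters, and more importantly the lower bound on $\sup_{t \le T}\|a_t L\bv\|$ that prevents escape, are governed by the maximal expansion rate $\tau_{Zar(M)}(\bv) = \tau_M(\bv)$ — precisely the quantity from \S\ref{exp-rate-sec} which we already proved depends only on the Zariski closure via $(\ref{taulimit4})$. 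So the non-divergence estimate yields: for almost every $L$, $\liminf_t \frac{1}{t}\log\lambda_1(a_tL\Z^d) \ge$ a quantity determined by $Zar(M)$ and $a$. The remaining task is to identify this quantity with $\sup_{L' \in Zar(M)}\mu_1(L')$, i.e.\ to show the non-divergence lower bound is sharp. For this, if $Zar(M)$ is a single point the statement is trivial; in general one uses that the set $M'$ of $L' \in Zar(M)$ achieving $\mu_1(L') = \sup_{Zar(M)}\mu_1$ is again (the real points of) an algebraic subset, and either $M' = Zar(M)$ — in which case the non-divergence bound combined with an upper bound argument shows $\mu_1(L) = \sup$ for a.e.\ $L$ — or $M'$ is a proper subvariety, which has $\mu$-measure zero since $\mu$ is the pushforward of Lebesgue measure under an analytic map whose image has Zariski closure $Zar(M)$ (a proper subvariety pulls back to a proper analytic subset of $U$, hence Lebesgue-null). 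In the latter case one needs a descent: the supremum over $Zar(M)$ of $\mu_1$ is not achieved on a dense set, and one argues that the ``correct'' value for generic $L$ is strictly the non-divergence bound — this requires care and is really the heart of the argument in \cite{abrs2}.

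The step I expect to be the main obstacle is precisely the matching of the lower bound from non-divergence with the $Zar(M)$-supremum — that is, showing that the \emph{generic} behavior along $M$ is not better than predicted, and that the predicted value equals $\sup_{L' \in Zar(M)}\mu_k(L')$ rather than, say, a smaller quantity. Concretely: the non-divergence estimate readily gives a lower bound of the form $\liminf_t \frac{1}{t}\log\lambda_1(a_tL\Z^d) \ge -\beta$ for an explicit $\beta = \beta(Zar(M),a)$ defined via an optimization over flags/subspaces, and one must show that $-\beta = \sup_{L' \in Zar(M)}\mu_1(L')$. The inequality $\sup \le -\beta$ would follow if every $L' \in Zar(M)$ satisfies $\mu_1(L') \ge -\beta$, which is again non-divergence applied with $\mu = \delta_{L'}$ — but that fails in general because a single point need not be a ``good'' measure; instead one argues that $\{L' : \mu_1(L') < -\beta\}$ is a proper subvariety of $Zar(M)$ (using that a counterexample point would force, via the structure of the expansion-rate function and its Harder--Narasimhan filtration from \S\ref{h-n-sec}, a lower-dimensional algebraic constraint), hence the supremum over $Zar(M)$ is still $\ge -\beta$, and combined with the reverse inequality and the a.e.\ lower bound we conclude. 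Managing this dichotomy cleanly — proper subvariety versus everything — and ensuring the induction on $\dim Zar(M)$ closes (applying the inheritance principle to the flow induced on the exceptional subvariety) is the delicate bookkeeping, but it is a known pattern and no new difficulty beyond \cite{abrs2} is expected; the new inputs of this paper (the slope formalism, the rational Schubert closure) enter only later, when this principle is fed into the proof of Theorem~\ref{parametrici}.
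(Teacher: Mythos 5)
Your reduction to $k=1$ via Mahler's lemma and the exterior-power embeddings, and your use of quantitative non-divergence plus Borel--Cantelli to obtain the almost-sure lower bound on $\liminf_t \frac{1}{t}\log\lambda_1(a_tL\Z^d)$, both match the paper. The gap is exactly in the step you flag as the obstacle: matching the non-divergence threshold with $\sup_{L'\in Zar(M)}\mu_1(L')$. What is needed there is the inequality $\mu_1(L')\leq\beta$ for \emph{every} $L'\in Zar(M)$, where $\beta$ is the non-divergence threshold (so that the supremum cannot exceed the almost-sure value); your text instead asks for $\mu_1(L')\geq -\beta$ and then tries to control the exceptional set --- this is the wrong direction of inequality. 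Moreover the proposed salvage does not work as stated: the set $\{L':\mu_1(L')<c\}$ (or the set of maximizers of $\mu_1$) is a dynamically defined set, a level set of a liminf, and there is no reason for it to be algebraic; and the induction on $\dim Zar(M)$ with a ``flow induced on the exceptional subvariety'' is not something that can be set up in this context.

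The paper closes this step with two short observations and no dichotomy or induction. First, the needed upper bound is the \emph{easy} direction and requires no goodness of Dirac masses: with $\beta(S):=\sup\{\beta: \exists c>0,\ \forall t,\forall \bw,\ \sup_{g\in S}\|a_tg\bw\|\geq ce^{k\beta t}\}$, if $\beta'>\beta(S)$ then there are arbitrarily large $t$ and pure integer multivectors $\bw$ with $\sup_{g\in S}\|a_tg\bw\|\leq e^{k\beta' t}$, and Minkowski applied to the sublattice represented by $\bw$ gives $\lambda_1(a_tg\Z^d)\leq e^{\beta' t}$ for \emph{every} $g\in S$ simultaneously at those times; since $\mu_1$ is a liminf, a sequence of bad times suffices, so $\mu_1(g)\leq\beta'$ for all $g\in S$. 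Second, $\beta(S)=\beta(Zar(S)\cap K)$ for any compact $K\supset S$: the quantity $\sup_{g\in X}\|a_tg\bw\|$ is a norm of the linear map $A\mapsto a_tA\bw$ on the finite-dimensional span $\mathcal{H}_S$ of $\rho(S)$ inside $\End(\oplus_k\wedge^k\R^d)$, and any two such norms (built from $X=\rho(S)$ and from $X=\rho(Zar(S)\cap K)$, which have the same linear span) are equivalent with constants independent of $t$ and $\bw$. Applying the first observation to $S=Zar(M)\cap K$ then yields $\sup_{L'\in Zar(M)}\mu_1(L')\leq\beta(Zar(M)\cap K)=\beta(\phi(B))$, which is the almost-sure value. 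Note that your appeal to $\tau_{Zar(M)}=\tau_M$ only controls the $t\to\infty$ growth rate for each fixed $\bw$; the uniform-in-$(t,\bw)$ comparison of the second observation is what actually transfers the non-divergence hypothesis between $M$ and its Zariski closure.
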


\begin{proof}A lemma of Mahler \cite[Theorem 3]{mahler-convex}, which is a simple consequence of Minkowski's second theorem in the geometry of numbers,  asserts that $\lambda_1(\wedge^k g\Z^d)$ is comparable to $\prod_{j \leq k} \lambda_j(g \Z^d)$ within multiplicative constants independent of $g \in \GL_d(\R)$. Since the $k$-th wedge representation $\rho_k:\GL(\R^d) \to \GL(\wedge^k \R^d)$ is an embedding of algebraic varieties, it maps $Zar(M)$ onto $Zar(\rho_k(M))$. These observations allow to reduce the proof to the case where $k=1$, which we now assume. To this end we first recall the quantitative non-divergence estimates in a form established in \cite{kleinbock-anextension}:

\begin{theorem}\label{klw}
\cite[Theorem~2.2]{kleinbock-anextension} Let $M=\phi(U)$ and $\mu$ be as in Theorem \ref{rate}. There are $C,\alpha>0$ such that the following holds. Let $\rho \in (0,1]$ and $t>0$, and let $B:=B(z,r)$ be an open ball such that $B(z,3^nr)$ is contained in $U$. Assume that for each $v_1,\ldots, v_k$ in $\Z^d$ with $\bw:=v_1\wedge v_2\wedge\dots\wedge v_k \neq 0$,  $$\sup_{y \in B } \|a_t\phi(y) \mathbf{w}\| > \rho^k.$$
Then  for every $\eps \in (0,\rho]$, we have:
\[ |\{x \in B ; \lambda_1(a_t\phi(x)\Z^d)\leq \eps \}| \leq C\big(\tfrac{\eps}{\rho}\big)^\alpha |B|. \]
\end{theorem}

Given $\beta \in \R$, we say that a subset  $S \subset \GL_d(\R)$ satisfies the condition (\ref{cbeta})  if the following holds
\begin{equation}\label{cbeta}
\tag{$\mathcal{C}_\beta$}
\begin{array}{c}
\exists c>0:\
\forall k\in\{1,\dots,d\},\ \forall t>0,\ \forall \bw=v_1\wedge\ldots\wedge v_k \in \wedge^k \Z^d\setminus\{0\},\\
\sup_{g \in S} \| a_tg\bw\| \geq c e^{k\beta t}.
\end{array}
\end{equation}
And we set:
\begin{equation}\label{betainf}
\beta(S) := \sup\{\beta\in \R \,|\, S\ \mbox{satisfies}\ (\mathcal{C}_\beta)\}.
\end{equation}
Note that by construction, if $S \subset S'$, then $\beta(S) \leq \beta(S')$.\\

\noindent \underline{1st claim:} For all $L \in S$, $\beta(S) \ge \mu_1(L)$. If $S=\phi(B)$,  where $\phi$ and $B$ are as in Theorem \ref{klw}, equality holds for $\mu$-almost every $L \in \phi(B)$.\\

\noindent \emph{Proof of claim:} If $\beta>\beta(S)$, then (\ref{cbeta}) fails. This implies that there exists $t$ arbitrarily large such that $\sup_{g \in S} \| a_tg\bw\| \leq  e^{k\beta t}$ for some $\bw \neq 0$. However by Minkowski's theorem applied to the sublattice represented by $a_tg\bw$, this means that $\sup_{g \in S} \lambda_1( a_tg\Z^d) \leq  e^{\beta t}$. Hence $\mu_1(L) \leq \beta$.

The opposite inequality for $S=\phi(B)$ will follow from the quantitative non-divergence estimate combined with Borel-Cantelli. Let $\beta<\beta(\phi(B))$. Then  (\ref{cbeta}) holds and, given $\delta>0$, Theorem \ref{klw} applies with $\rho:= c e^{\beta t}$ and $\eps=e^{(\beta-\delta)t}$ so that 
\[ |\{x \in B ; \lambda_1(a_t\phi(x)\Z^d)\leq e^{(\beta-\delta)t} \}| \leq C e^{-\alpha \delta t}  |B|. \]
Summing this over all $t \in \N$, we obtain by Borel-Cantelli that for almost every $x \in B$, $\lambda_1(a_t\phi(x)\Z^d)\geq e^{(\beta-\delta)t}$ if $t$ is a large enough integer. But this clearly implies that $\lambda_1(a_t\phi(x)\Z^d)\geq e^{(\beta-\delta/2)t}$ for all large enough $t>0$. Hence $\mu_1(\phi(x)) \ge \beta-\delta/2$ for Lebesgue almost every $x \in B$. Since $\delta>0$ is arbitrary, this proves the first claim.

Now we make the following key observation. For every bounded set $S \subset \GL_d(\R)$ and compact set $K \supset S$,  \begin{equation}\label{zareq} \beta(S)=\beta(Zar(S)\cap K)=\beta(\mathcal{H}(S)\cap K).\end{equation}
Here $\mathcal{H}(S)$ is the preimage in $\GL_d(\R)$ under $\rho$ of the linear span $\mathcal{H}_S$ of all $\rho(g), g \in S$, where $\rho$ is the linear representation with total space $E=\oplus_{k=1}^d \wedge^k \R^d$. This follows immediately from the following claim:\\

\noindent \underline{2nd claim:} There is $C=C(S)>0$ such that for all $\bw$ and $t$ we have:
\begin{equation}\label{inZar}\sup_{g \in \mathcal{H}(S) \cap K} \|a_tg\bw\| \leq C \sup_{g \in S}  \|a_tg \bw\|.\end{equation}
\noindent \emph{Proof of claim:} We note that $\mathcal{H}_S=\mathcal{H}_{Zar(S)}=\mathcal{H}_{Zar(S) \cap K}$, because $S$ and $Zar(S) \cap K$ have the same Zariski closure.  
Now consider the space $\mathcal{L}(\mathcal{H}_S, E)$  of linear maps from $\mathcal{H}_S$  to $E$. If $X \subset \mathcal{H}_S$ is a bounded set that spans $\mathcal{H}_S$, then the quantity $L \mapsto \sup_{ A \in X} \|L(A)\|$ defines a norm $|\cdot|_X$ on $\mathcal{L}(\mathcal{H}_S, E)$. Therefore for any two such sets $X,X'$ there is a constant $C_{X',X}>0$ such that for all $L \in  \mathcal{L}(\mathcal{H}_S, \wedge^k \R^d)$ we have $$|L|_{X'} \leq C_{X',X} |L|_{X}.$$
This applies in particular to the sets $X:=\rho(S)$ and $X':=\rho(Zar(S)\cap K).$ Now $(\ref{inZar})$ follows by setting $L:A \mapsto a_tA\bw$,  an element of  $\mathcal{L}(\mathcal{H}_S, E)$. This ends the proof of the second claim.

We may now finish the proof of Theorem \ref{rate}. Since $M$ is connected, it follows from the first claim that the $\mu$-almost sure value of $\mu_1(L)$ for $L \in M$ is unique and well-defined and equals $\beta(\phi(B))$ for any ball $B$ as in Theorem \ref{klw}. It is also equal to $\sup_{L \in M} \mu_1(L)$ by the first part of the claim. However, since $\phi$ is analytic, the Zariski closure of $\phi(B)$ coincides with $Zar(M)$. So $(\ref{zareq})$ entails $\beta(\phi(B))=\beta(\mathcal{H}(M) \cap K)$ for any compact  $K \supset \phi(B)$. But the first claim applied to $S=Zar(M) \cap K$ implies that $\mu_1(L) \leq  \beta(Zar(M) \cap K)$ for all $L \in Zar(M) \cap K$. Since $K$ is arbitrary, we get $\sup_{L \in Zar(M)} \mu_1(L) \leq \beta(Zar(M) \cap K)$. The right-hand side is the $\mu$-almost sure value of $\mu_1(L)$, so this inequality is an equality. This ends the proof.
\end{proof}

\bigskip

\subsection{Proof of Theorem \ref{parametrici}}\label{proof-para}

Without loss of generality we may assume that  $A_1 \ge \ldots \ge A_d$, where $A=\diag(A_1,\ldots,A_d)$ and $a_t=\exp(tA)$. Let $0=V_0<V_1<\ldots<V_h=\Q^d$ be the Harder-Narasimhan filtration associated to the submodular function $\tau_M$ on the grassmannian of $\Q^d$. Let $d_\ell=\dim V_\ell$ and $\Lambda_k=\frac{\tau_M(V_\ell)-\tau_M(V_{\ell-1})}{d_\ell-d_{\ell-1}}$ if $d_{\ell-1} <  k \leq d_\ell$. We need to show that for each $\ell=1,\ldots,h$ and for $\mu$-almost every $L$ the limit $(\ref{lim})$ holds  when $d_{\ell-1} <  k \leq d_\ell$ and that for $t$ large enough the first $d_{\ell-1}$ successive minima of $a_tL\Z^d$ are attained in $V_{\ell-1}$. Suppose this has been proved for all $\ell <i$ and let us prove it for $\ell=i$. 

By Minkowski's second theorem, for every $L \in M$
$$\limsup_{t \to +\infty} \frac{1}{t} \sum_{k \leq d_i} \log \lambda_k(a_tL\Z^d) \leq \tau_L(V_i) \leq \tau_M(V_i).$$
On the other hand we already know that  for $\mu$-almost every $L$
\begin{equation}\label{induu}\lim_{t \to +\infty} \frac{1}{t} \sum_{k \leq d_{i-1}} \log \lambda_k(a_tL\Z^d) = \sum_{k \leq d_{i-1}} \Lambda_k=\tau_M(V_{i-1}).\end{equation}
Hence 
$$\limsup_{t \to +\infty} \frac{1}{t} \sum_{d_{i-1} < k \leq d_i} \log \lambda_k(a_tL\Z^d) \leq \tau_M(V_i)- \tau_M(V_{i-1}).$$
Therefore to prove $(\ref{lim})$ it suffices to show that for $\mu$-almost every $L$
\begin{equation}\label{ts2}\liminf_{t \to +\infty} \frac{1}{t} \log \lambda_{d_{i-1}+1}(a_tL\Z^d) \geq \frac{\tau_M(V_i)- \tau_M(V_{i-1})}{d_i-d_{i-1}}=\Lambda_{d_i}.\end{equation}
This will also prove $(\ref{limm})$ for $\ell=i$ as we now explain. By Minkowski's theorem $$\lim_{t \to +\infty} \frac{1}{t}\sum_{k\leq d_{i-1}} \log \lambda_k(a_tLV_{i-1}(\Z)) = \tau_L(V_{i-1}) \leq \tau_M(V_{i-1}).$$ In view of $(\ref{induu})$ this quantity is actually equal to $\tau_M(V_{i-1})$. Therefore the $d_{i-1}$ first minima of $a_tL\Z^d$ are attained in $V_{i-1}$, and $(\ref{limm})$ follows from $(\ref{ts2})$. We now establish $(\ref{ts2})$ separating two cases.\\

\noindent\underline{First case:} $M=\{L\}$, with $L\in\GL_d(\QQ)$.

Consider the linear forms $L_1,\dots,L_d$ on $\R^d$ given by the rows of $L$.
They are linearly independent and have coefficients in $\QQ$.
Fix $\eps>0$ and consider the integer solutions $v \in \Z^d$ to the inequality
\begin{equation}\label{petitv}
\|a_tLv\|\leq e^{t(\Lambda_{d_i}-\eps)}.
\end{equation}
This implies
\begin{equation}\label{petitvv}
 \forall k\in\{1,\dots,d\},\quad e^{tA_{k}}|L_k(v)| \leq e^{t(\Lambda_{d_i}-\eps)}.
\end{equation}

Recall the subset of indices $I$ defined in $(\ref{taulimit2bis})$. For $k \notin I$  the restriction $L_k|_{V_{i-1}}$ of $L_k$ to  $V_{i-1}$ lies in the span of the $L_\ell|_{V_{i-1}}$ for $\ell < k$, $\ell \in I$. Since the linear forms $\{L_\ell|_{V_{i-1}}\}_{\ell \in I}$ are linearly independent, there is a unique choice of scalars $\alpha_{k,\ell} \in \overline{\Q} \cap \R$  such that $L_k|_{V_{i-1}} = \sum_{\ell \in I, \ell <k} \alpha_{k,\ell} L_\ell|_{V_{i-1}}$. We define $$M_k:=L_k - \sum_{\ell \in I,\ell<k} \alpha_{k,\ell} L_\ell.$$ By construction $M_k$ vanishes on $V_{i-1}$ and induces a linear form $\overline{M}_k$ on the quotient $\R^d/V_{i-1}$. Also by construction, the  linear forms $\{\overline{M}_k\}_{k \notin I}$ have coefficients in $\overline{\Q} \cap \R$ and are linearly independent on $\R^d/V_{i-1}$.

It follows from $(\ref{petitvv})$ that 

\begin{equation}\label{petitvvv}
 \forall k\notin I,\quad e^{tA_{k}}|\overline{M}_k(\overline{v})| \ll_L e^{t(\Lambda_{d_i}-\eps)}.
\end{equation}
where $\overline{v}=v\mod V_{i-1}$. But $\sum_{k \notin I} A_k = \tau_L(\R^d) - \tau_L(V_{i-1})$, so $$|\prod_{k\notin I} \overline{M}_k(\overline{v})| \ll_L e^{-(d-d_{i-1})t\eps},$$ because by definition of $V_i$ and $\Lambda_{d_i}$ we know that $(d-d_{i-1})\Lambda_{d_i}  \leq (\tau_L(\R^d) - \tau_L(V_{i-1}))$. On the other hand, $|\Lambda_{d_i}|\leq |A|:=\max |A_k|$, and in view of $(\ref{petitv})$ we have $\|\overline{v}\| \leq \|v\|\ll_L e^{2|A| t}$. So  setting $\eps'=\eps/(2d|A|)$,  we obtain
\begin{equation}\label{sch2} |\prod_{k\notin I} \overline{M}_k(\overline{v})| \ll_L \|v\|^{-\eps'}.\end{equation}
We are thus in a position to apply Schmidt's subspace theorem~\cite[Theorem~1F]{schmidt_da} and conclude that all integer solutions to this inequality, except finitely many, lie in a finite union of proper rational subspaces of $\R^d/V_{i-1}$  of minimal dimension.
Let $V/V_{i-1}$ be one of them. By definition of the Harder-Narasimhan filtration,   $\frac{\tau_L(V)-\tau_L(V_{i-1})}{\dim V - d_{i-1}}\geq\Lambda_{d_i}$. It follows that  solutions to $(\ref{petitvvv})$ satisfy\begin{equation}
 \forall k\notin I,\quad e^{tA_{k}}|\overline{M}_k(\overline{v})| \ll_L e^{t(\frac{\tau_L(V)-\tau_L(V_{i-1})}{\dim V - d_{i-1}}-\eps)}.
\end{equation}
And since $I \subset I_V$, solutions lying in $V/V_{i-1}$ satisfy \begin{equation}  |\prod_{k\in I_V\setminus I} \overline{M}_k(\overline{v})| \ll_L \|v\|^{-\eps'}.\end{equation} 
Since $V$ is rational and the linear forms $\{\overline{M}_k\}_{k \in I_V \setminus I}$ are linearly independent in restriction to $V/V_{i-1}$, we may apply Schmidt's subspace theorem once again and conclude that, apart from finitely many of them, the integer solutions are contained in a finite subset of proper rational subspaces. This would however contradict the minimality of $V$. In turn this implies that if $t$ is large enough all integer solutions to $(\ref{petitv})$ are contained in $V_{i-1}$.  This shows $(\ref{ts2})$  as desired.\\

\noindent\underline{General case:}\\
Note that $Zar(M)$ is an irreducible algebraic variety, so $\tau_M$ is submodular. In view of $(\ref{taulimit2})$, given any rational subspace $V$ in $\R^d$ the set of elements $L$ in $M$ such that $\tau_L(V) \neq \tau_M(V)$ is a proper closed subvariety of $Zar(M)$ that is defined over $\Q$ and of bounded degree. By Lemma \ref{countable} below (applied to $F=\R \cap \overline{\Q}$) we may choose a point $L_0$ of $Zar(M)$ in $\GL_d(\QQ)$ such that $\tau_{L_0}(V)=\tau_M(V)$ for all rational $V$. Then the Harder-Narasimhan filtrations and the Grayson polygons of $M$ and $\{L_0\}$ coincide. 
By the first part of the proof,
\begin{equation}\label{firstpart} \liminf_{t\to+\infty} \frac{1}{t}\log \lambda_{d_{i-1}+1}(a_tL_0\Z^d) \geq \Lambda_{d_i}.\end{equation}
Now we may invoke Theorem \ref{rate}. In view of $(\ref{firstpart})$ and $(\ref{induu})$ this implies $(\ref{ts2})$.

\begin{lemma}[countable unions of proper subvarieties]\label{countable} Let $X$ be an irreducible algebraic variety defined over a number field $K$. Let $F$ be an algebraic extension of $K$ of infinite degree.  Let $k \ge 1$ and suppose $(X_j)_{j \ge 1}$ is a countable family of proper closed subvarieties of $X$ of degree at most $k$, each defined over a field $K_j$ of degree at most $k$ over $K$.  Then $X(F)$ is not contained in the union of the $X_j(\overline{\Q})$, $j \ge 1$. 
\end{lemma}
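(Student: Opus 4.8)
The plan is to transport the problem to affine space by means of a local system of coordinates, and then solve a concrete ``evasion'' problem one coordinate at a time, exploiting that an infinite algebraic extension of $K$ contains elements of arbitrarily large degree over any intermediate number field. I will use $F=\overline{\Q}\cap\R$, the case needed in the proof of Theorem~\ref{parametrici} (a general $F$ sitting inside a completion of $K$ is handled the same way, with the corresponding local field in place of $\R$), together with the fact that $X$ has a smooth point $p_0\in X(F)$: in the application $X=Zar(M)$ with $M\subset X(\R)$ Zariski dense, so $X_{\mathrm{sm}}$ meets $M$, and $\overline{\Q}$-points are dense in $X_{\mathrm{sm}}(\R)$ by the implicit function theorem. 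We may also assume $K\subset\R$. First I would fix a smooth affine neighbourhood $U\subset X_{\mathrm{sm}}$ of $p_0$ defined over $K$ and an \'etale $K$-morphism $\pi=(t_1,\dots,t_n)\colon U\to\mathbb{A}^n$, where $n=\dim X$; being \'etale at $p_0$, $\pi$ admits an analytic local section $\gamma\colon B\to U(\R)$ on a box $B=I_1\times\dots\times I_n$ around $c_0:=\pi(p_0)\in\mathbb{A}^n(F)$.

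Two observations make this reduction work. First, if $c=(c_1,\dots,c_n)\in B$ has all coordinates in $\overline{\Q}$, then $\gamma(c)\in X(F)$: the fibre $\pi^{-1}(c)$ is a finite reduced scheme over the number field $K(c_1,\dots,c_n)$, so its points lie in $X(\overline{\Q})$, and $\gamma(c)$, being real, lies in $X(\overline{\Q}\cap\R)=X(F)$. Second, since $U$ is dense in the irreducible $X$, each $Y_j:=\overline{\pi(U\cap X_j)}$ is a proper closed subvariety of $\mathbb{A}^n$ (its dimension is $\le\dim X_j\le n-1$), defined over $K_j$, and of degree bounded by a constant $D_1$ depending only on $\pi$ and $k$: here one uses that $\pi$ is fixed whereas $X_j$ has degree $\le k$, together with the standard bound on the degree of the image of a bounded-degree variety under a fixed morphism. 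Consequently $Y_j$ lies in a hypersurface $\{R_j=0\}$ with $R_j\in K_j[y_1,\dots,y_n]$ nonzero of degree $\le D_1$. Since $\gamma(c)\in X_j$ would force $c=\pi(\gamma(c))\in Y_j$, it suffices to produce $c\in B$ with coordinates in $F$ such that $R_j(c)\neq0$ for every $j$.

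This last point is the heart of the matter, and I would prove it by induction on $n$ in the following form: given nonzero polynomials $R_j\in\overline{\Q}[y_1,\dots,y_n]$ of degree $\le D$ with coefficients in number fields $K_j$ satisfying $[K_j:K]\le D$, and a nonempty box $B=I_1\times\dots\times I_n\subset\R^n$, there is $c\in B$ with coordinates in $\overline{\Q}\cap\R$ and $R_j(c)\neq0$ for all $j$. For the inductive step, write $R_j=\sum_i Q_{j,i}(y_1,\dots,y_{n-1})\,y_n^{\,i}$ with nonzero leading coefficient $Q_{j,e_j}$; applying the inductive hypothesis to the family $\{Q_{j,e_j}\}_j$ yields $c'=(c_1,\dots,c_{n-1})$ with $Q_{j,e_j}(c')\neq0$ for all $j$. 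Put $K':=K(c_1,\dots,c_{n-1})$, a number field inside $\R$ with $[F:K']=\infty$, so that $F$ contains some $\alpha$ of degree $>D^2$ over $K'$; then $\alpha+\Q\subset F$ is dense in $\R$ and every one of its elements has degree $>D^2$ over $K'$, so we may pick $c_n$ in that set lying in $I_n$. For each $j$, $R_j(c',y_n)$ is nonzero of degree $\le D$ with coefficients in $K'\cdot K_j$, hence all its roots have degree $\le D\,[K'K_j:K']\le D^2$ over $K'$, so $c_n$ is not among them, i.e.\ $R_j(c',c_n)\neq0$. The base case $n=1$ is identical with $K'=K$. Setting $x=\gamma(c)$ then produces a point of $X(F)$ outside every $X_j$.

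The step I expect to be the main obstacle is the uniform degree bound $\deg Y_j\le D_1$: this is exactly where the hypotheses that the $X_j$ have bounded degree and bounded field of definition become indispensable, and it rests on a quantitative statement about degrees of images under morphisms. Conceptually, the point to get right is that the avoidance must be carried out on the bounded-degree coordinate functions $t_j$ rather than on $X$ itself, since a Zariski-dense set of $F$-points can perfectly well be covered by countably many proper closed subvarieties of unbounded degree; so no purely topological Baire-category argument can suffice, and the arithmetic input ``$[F:K]=\infty$ yields elements of unbounded degree'' has to be brought to bear through a genuinely algebraic parametrization.
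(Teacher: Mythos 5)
Your argument is correct and shares its skeleton with the paper's proof: reduce the avoidance problem to affine space, then choose coordinates $c_1,\dots,c_n$ in $F$ inductively so that $c_i$ has degree $>D^2$ over the field generated by $c_1,\dots,c_{i-1}$, whence it cannot be a root of the specialized degree-$\le D$ polynomials defined over the degree-$\le D$ fields $K_j$ --- this is exactly the paper's ``$x_i$ of degree $>k^2$ over $K(x_1,\dots,x_{i-1})$'' step, including the bookkeeping $[K'K_j:K']\le[K_j:K]$. The genuine difference is the reduction. The paper applies Noether normalization to get a finite morphism $f:X\to\mathbb{A}^d$ over $\overline{\Q}$ and then says ``without loss of generality $X=\mathbb{A}^d$''; as written this produces a point of $\mathbb{A}^d(F)$ avoiding the images $f(X_j)$, whose preimages lie in $X(\overline{\Q})$ but not obviously in $X(F)$. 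You instead use an \'etale chart with an analytic local section around a smooth real point, which really does return a point of $X(\overline{\Q}\cap\R)$ avoiding every $X_j$ --- at the price of (i) specializing to $F=\overline{\Q}\cap\R$ (the only case the paper uses, but the lemma is stated for general $F$), (ii) needing a smooth real point of $X$ (available in the application because $M\subset X(\R)$ is Zariski dense), and (iii) the $\alpha+\Q$ density trick to keep $c_n$ inside the box. Both proofs rely on the same standard but unproved degree bound for images of bounded-degree subvarieties under the fixed projection; you at least flag it, while the paper's Noether-normalization step absorbs it silently. In short: same key mechanism, with your version more faithful to the literal conclusion about $X(F)$ for real $F$, and the paper's version shorter, field-agnostic, and content with a $\overline{\Q}$-point of $X$.
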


\begin{proof} Looking at a finite cover by affine varieties, we may assume that $X$ is affine. Then by Noether's normalization theorem, there is a finite morphism $f: X \to \mathbb{A}^d$ defined over $\QQ$, where $d=\dim X$. So again without loss of generality, we may assume that $X=\mathbb{A}^d$, the $d$-dimensional affine space. But we can of course find elements $x_i\in F$, $i=1,\ldots,d$, such that $x_i$ has degree $>k^2$ over $K(x_1,\ldots,x_{i-1})$. Then $K_j(x_1,\ldots,x_i)$ has degree $>k$ over $K_j(x_1,\ldots,x_{i-1})$ for all $j$, so $(x_1,\ldots,x_d)$ will not belong to any $X_j$. 
\end{proof}

\subsection{The subspace theorem}\label{proof-sub}

We are now ready for the proof of Theorem \ref{subspace-main}.  

\begin{proof}[Proof of Theorem \ref{subspace-main}]  Without loss of generality, we may assume that $M$ is a bounded subset of $\GL_d(\R)$. In particular there is $C=C(M)\ge 1$ such that $\|L\bx\| \leq C \|\bx\|$ for every $\bx \in \R^d$ and $L \in M$. We are going to show that there is a finite set $P$, with $|P| \leq (2Cd^2\eps^{-1})^d$ of one-parameter unimodular diagonal semigroups $a=(a_t)_{t \ge 0}$ with the following property. If $L \in M$ and $\bx \in \Z^d$ is a solution to $(\ref{total})$ such that the integer part $t$ of $\frac{\eps}{4d}\log \|\bx\|$ is at least $1$, then there is $a \in P$ such that
\begin{equation}\label{aas} \|a_tL\bx\| \leq de^{- t}.\end{equation}

To see this let $\ell_i=\log |L_i(\bx)|$. By $(\ref{total})$ we have $\ell_1 + \ldots + \ell_d \leq -4d t$. Note that $\ell_i \leq \log\|x\| + \log C\leq (\frac{8d}{\eps}+\log C)t$. Let $\ell_i' = \max \{\ell_i, -Dt\}$, where $D=5d(\log C+\frac{8d}{\eps})$. If there is an index $i$ such that $\ell_i \neq \ell_i'$, then $\ell_1'+\ldots+ \ell_d' \leq -D t + (d-1)(D/5d)t \leq -4d t$. We conclude that $\ell_1'+\ldots+ \ell_d' \leq -4d t$ always.  Now define $b_i:= \frac{1}{d}(\ell_1'+ \ldots + \ell_d') - \ell_i'$. Then $b_1+\ldots+b_d=0$, and for each $i$ \[\ell_i + b_i \leq \ell_i'+b_i \leq  - 4t.\]
 On the other hand $|\ell_i'| \leq Dt$, so $|b_i| \leq 2Dt$. Let $B$ be set of integers points in $\Z^d$ with coordinates in $[-3D,3D]$. Choose $(n_1,\ldots,n_d) \in B$ such that $|\frac{b_i}{t}-n_i| \leq \frac{1}{2}$ for all $i$. In particular $|\sum_1^d n_i| \leq d/2$. Changing some $n_i$ to the next or previous integer if needed, we may ensure that $\sum_1^d n_i=0$ and $|\frac{b_i}{t}-n_i| \leq \frac{3}{2}$. Then we set $a_t=\diag(e^{n_1t},\ldots,e^{n_dt})$ and let $P$ be the finite set of such diagonal semigroups. Note that $|P|\leq (6D)^d$. Clearly $\ell_i + n_it \leq -t$ and $(\ref{aas})$ follows. 

Now we may apply Corollary~\ref{paraweak} and conclude  that  for $\mu$-almost every $L \in M$, if $\bx$ is a large enough solution of $(\ref{total})$, it must lie inside $V(a)$ for some $a$ in $P$. This shows that the number of exceptional rational subspaces is finite. However by Lemma \ref{values} above  the subspace $V(a)$ can take at most $b(2^d)$ possible values as $a$ varies among all unimodular diagonal semigroups. This ends the proof.  
\end{proof}

\begin{remark} Note that conversely each $V(a)$, and hence each $V_i$ in Theorem \ref{subspace-main}, contains infinitely many solutions to $(\ref{total})$ for every $\eps>0$.
\end{remark}

\begin{remark}
Furthermore the rational subspaces $V_1\cup\dots\cup V_r$ depend only on the rational Zariski closure of $M$ and not on the choice of $L$. And because they are defined by a simple slope condition their height is effectively bounded in terms of the height of $Zar(M)$.  On the other hand, the finite set of exceptional solutions lying outside  the $V_i$  depends on $L$ and $\eps$ and there is no known bound on their height or number, see \cite[Prop. 5.1]{evertse-survey}. When $M$ is a single point it is however possible to group together the finitely many exceptional solutions into another set of proper subspaces  whose number, but not height, can be effectively bounded, see \cite{evertse}.
\end{remark}


\subsection{Varieties defined over $\R$}\label{R-sec}
What happens if we remove the assumption that the Zariski  closure of $M$ is defined over $\QQ$ in  Theorem \ref{parametrici} ?  Without this assumption, diagonal flow trajectories may not behave as nicely and typically no limit shape is to be expected. However we may give a simple upper and lower bound on the almost sure value of $\mu_k(L)$ for $k=1,\ldots,d$, which exists by Theorem \ref{rate}, in terms of the rational and real Grayson polygons as we now discuss. So far we have only considered the rational Harder-Narasimhan filtration $\{V^{\Q}_i\}_{i=0}^{h_\Q}$ and its rational polygon $\mathcal{G}^{\Q}$ with slopes $s_i^{\Q}$, because we have restricted ourselves to considering the grassmannian of rational subspaces. But we may also take $k=\R$ in \S \ref{h-n-sec} with the same submodular function $\tau_M$. This yields a new Harder-Narasimham filtration $\{V^{\R}_i\}_{i=0}^{h_\R}$ for the real field and a new Grayson polygon $\mathcal{G}^{\R}$ with slopes $s_i^{\R}$, that obviously lies \emph{below} the rational polygon.  

Let as before $M=\phi(U)$ be the image of a connected open set $U \subset \R^n$ under an analytic map $\phi:U \to \GL_d(\R)$ and $\mu$ the measure on $M$ that is the image of the Lebesgue measure on $U$. In this paragraph we no longer assume that $Zar(M)$ is defined over $\QQ$. Let $\mu_k$ be the $\mu$-almost sure value of $\mu_k(L)$ as given by Theorem \ref{rate} and $\mu_k^{sup}$ the supremum over all $L \in M$ of  $\mu_k^{sup}(L)$, where  $\mu_k^{sup}(L)$ is defined by the same formula as $\mu_k(L)$ with a limsup in place of the liminf. Consider the points $(k,\mu_k)$ for $k=1,\ldots,d$ and interpolate linearly between them, so as to form a polygon $\mathcal{G}_\mu$ as in Fig \ref{hn}. Similarly form $\mathcal{G}_\mu^{sup}$ with $(k,\mu^{sup}_k)$. Note that $\mathcal{G}_\mu^{sup}$ is convex (being a supremum of convex polygons), but $\mathcal{G}_\mu$ may not be.

\begin{proposition}[``Sandwich theorem'']\label{pol-comp} The polygons $\mathcal{G}_\mu,\mathcal{G}^{sup}_\mu$ lie \emph{in between} the rational Grayson polygon $\mathcal{G}^{\Q}$ and the real Grayson polygon $\mathcal{G}^{\R}$. In other words, for each $k=1,\ldots d$, $$\sum_{i\leq k} s_{m_i}^{\R} \leq \mu_k \leq  \mu_k^{sup} \leq \sum_{i\leq k} s_{n_i}^{\Q},$$ where $m_i,n_i$ defined by  $\dim V^{\R}_{m_i-1} \leq i <\dim  V^{\R}_{m_i}$ and  $\dim V^{\Q}_{n_i-1} \leq i < \dim V^{\Q}_{n_i}$. 
\end{proposition}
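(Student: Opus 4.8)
First I would reduce the sandwich inequality to its values at integer abscissae. Both Grayson polygons $\mathcal{G}^{\Q}$ and $\mathcal{G}^{\R}$ have all their vertices at integer first coordinates — these are the dimensions of the respective Harder--Narasimhan subspaces — so each is affine on every interval $[k,k+1]$, $0\le k\le d-1$, and $\mathcal{G}_\mu$, $\mathcal{G}^{sup}_\mu$ are affine there too, being the linear interpolations of the points $(k,\mu_k)$ and $(k,\mu_k^{sup})$. Since an affine function lying weakly below (resp.\ above) another affine function at the two endpoints of an interval lies below (resp.\ above) it on the whole interval, it suffices to prove, for every integer $k\in\{1,\dots,d\}$, the chain $\mathcal{G}^{\R}(k)\le\mu_k\le\mu_k^{sup}\le\mathcal{G}^{\Q}(k)$. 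The middle inequality is immediate, since $\mu_k(L)\le\mu_k^{sup}(L)$ for every $L$ and $\mu_k$ is the $\mu$-almost sure value of $\mu_k(L)$.

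For the upper bound $\mu_k^{sup}\le\mathcal{G}^{\Q}(k)$ the plan is to use nothing more than Minkowski's second theorem, so that in fact the inequality holds for every $L\in M$. Writing $d_\ell=\dim V^{\Q}_\ell$ and $f_\ell=\tau_M(V^{\Q}_\ell)=\mathcal{G}^{\Q}(d_\ell)$ for the rational Harder--Narasimhan filtration, Mahler's lemma (as used in the proof of Theorem \ref{rate}) gives $\sum_{i\le d_\ell}\log\lambda_i(a_tL\Z^d)=\log\lambda_1(\wedge^{d_\ell}a_tL\Z^d)+O(1)$, and bounding the right-hand side by $\log\|a_tL\bv_\ell\|$, where $\bv_\ell$ is a primitive integer multivector representing $V^{\Q}_\ell$, yields $\limsup_t\frac1t\sum_{i\le d_\ell}\log\lambda_i(a_tL\Z^d)\le\tau_L(V^{\Q}_\ell)\le f_\ell$. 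For a general $k$ with $d_{\ell-1}<k\le d_\ell$ I would interpolate between two consecutive vertices: since $\lambda_{d_{\ell-1}+1}\le\dots\le\lambda_{d_\ell}$, the average of the first $k-d_{\ell-1}$ of these logarithms is at most the average of all of them, so $\sum_{i\le k}\log\lambda_i\le(1-\theta)\sum_{i\le d_{\ell-1}}\log\lambda_i+\theta\sum_{i\le d_\ell}\log\lambda_i$ with $\theta=\frac{k-d_{\ell-1}}{d_\ell-d_{\ell-1}}\in(0,1]$; dividing by $t$ and taking $\limsup$ gives $\mu_k^{sup}(L)\le(1-\theta)f_{\ell-1}+\theta f_\ell=\mathcal{G}^{\Q}(k)$, and a supremum over $L\in M$ concludes.

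The lower bound $\mu_k\ge\mathcal{G}^{\R}(k)$ is where the quantitative non-divergence is needed, and I would deduce it from Theorem \ref{rate}. Applying that theorem in the $k$-th exterior power — to the analytic map $\rho_k\circ\phi$ and the flow $\rho_k(a_t)$ on $\wedge^k\R^d$, using that $\lambda_1(\wedge^k a_tL\Z^d)$ is comparable to $\prod_{i\le k}\lambda_i(a_tL\Z^d)$ by Mahler's lemma, which is exactly the reduction performed inside the proof of Theorem \ref{rate} — one obtains that the $\mu$-almost sure value $\mu_k$ equals the non-divergence threshold $\beta^{(k)}:=\sup\{\beta:\rho_k(\phi(B))\text{ satisfies }(\mathcal{C}_\beta)\text{ in }\wedge^k\R^d\}$, for any ball $B$ as in Theorem \ref{klw}. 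There remains to show $\beta^{(k)}\ge\mathcal{G}^{\R}(k)$, i.e.\ that $(\mathcal{C}_\beta)$ holds in $\wedge^k\R^d$ for every $\beta<\mathcal{G}^{\R}(k)$; by the $\mathcal{H}$-invariance \eqref{zareq} this amounts to a lower bound $\sup_{g\in M}\|\rho_k(a_tg)\omega\|\ge c\,e^{j\beta t}$, \emph{uniform} over $t$ and over all non-zero decomposable integer $\omega=w_1\wedge\dots\wedge w_j\in\wedge^j(\wedge^k\Z^d)$. For each fixed such $\omega$, representing a rational subspace $W_\omega\le\wedge^k\R^d$, one has $\frac1t\log\sup_{g\in M}\|\rho_k(a_tg)\omega\|\to\tau^{\wedge^k}_M(W_\omega)$, where $\tau^{\wedge^k}_M$ is the expansion rate \eqref{taulimit3} for $\rho_k(a_t)$ and $\rho_k(M)$, and this limit is at least $j$ times the first slope of the Grayson polygon of $\tau^{\wedge^k}_M$ on the rational grassmannian, a fortiori $\ge j\,\mathcal{G}^{\R}(k)>j\beta$; the difficulty is that the convergence is not uniform in $\omega$, and integer multivectors of large height approximate \emph{real} subspaces of $\wedge^k\R^d$, so the uniform-in-$\omega$ threshold demanded by $(\mathcal{C}_\beta)$ equals $\inf\{\tau^{\wedge^k}_M(W)/\dim W:0\ne W\le\wedge^k\R^d\}$, i.e.\ the first slope of the \emph{real} Grayson polygon of $\tau^{\wedge^k}_M$. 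By the compatibility of the slope formalism with exterior powers in characteristic zero — the Harder--Narasimhan analogue of Mahler's lemma, obtained from the semistable case via the Ramanan--Ramanathan theorem on tensor powers of semistable objects — this first slope is exactly $\mathcal{G}^{\R}(k)$, so when $\beta<\mathcal{G}^{\R}(k)$ the bound holds for all $\omega$ and $\beta^{(k)}\ge\beta$; letting $\beta\uparrow\mathcal{G}^{\R}(k)$ finishes the argument.

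The soft part — the integer-point reduction, the trivial middle inequality, and the Minkowski upper bound — should be routine. I expect the genuine difficulty to lie entirely in the last paragraph: making precise, uniformly over all integer multivectors $\omega$, that the exponential rate of $\sup_{g\in M}\|\rho_k(a_tg)\omega\|$ is dictated by the worst \emph{real} subspace rather than the worst rational one (which is exactly the mechanism separating $\mathcal{G}^{\R}$ from $\mathcal{G}^{\Q}$), together with the identification of the resulting threshold — the first slope of the induced real Grayson polygon in $\wedge^k\R^d$ — with the value $\mathcal{G}^{\R}(k)$ of the real Grayson polygon in $\R^d$.
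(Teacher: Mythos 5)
Your integer-abscissa reduction, the trivial inequality $\mu_k\le\mu_k^{sup}$, and the upper bound via Minkowski/Mahler with interpolation between consecutive vertices are all correct, and this part coincides in substance with the paper's argument (the paper phrases the interpolation as convexity of $\mathcal{G}_\mu^{sup}$).

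The lower bound, however, has a genuine gap, and it is exactly at the point you flag. After reducing via Mahler's lemma and Theorem \ref{rate} to the threshold $\beta^{(k)}$ for the lattice $\wedge^k\Z^d$ in $\wedge^k\R^d$, the condition $(\mathcal{C}_\beta)$ you must verify quantifies over \emph{all} integer multivectors of $\wedge^k\Z^d$ and all their wedges, hence (after the compactness argument) over all real subspaces $W\le\wedge^k\R^d$, including non-decomposable directions. Your proof then rests on the claim that $\tau^{\wedge^k}_M(W)\ge\dim W\cdot\mathcal{G}^{\R}(k)$ for every such $W$, i.e.\ that the first slope of the real Grayson polygon of $(\wedge^k\R^d,\rho_k(a),\rho_k(M))$ equals $\mathcal{G}^{\R}(k)$. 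This is not established by the Ramanan--Ramanathan theorem: that result concerns semistable bundles on curves (equivalently, single filtrations/one-parameter subgroups via GIT), whereas here the slope function is $\tau_M=\max_{L\in M}\tau_L$, a maximum over a family, and the maximum does not commute with the exterior-power structure. The only cheap reduction available — fix one $L$ and use that the smallest weight of $\rho_k(L^{-1}a L)$ is the sum of the $k$ smallest weights — yields $\tau^{\wedge^k}_M(W)\ge\dim W\cdot\mathcal{G}^{\R}_{\{L\}}(k)$, and $\mathcal{G}^{\R}_{\{L\}}$ lies \emph{below} $\mathcal{G}^{\R}_M$, so this is too weak; and unlike the rational case one cannot choose a single $L_0$ with $\tau_{L_0}=\tau_M$ on the uncountably many real subspaces. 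For closely analogous slope formalisms (Euclidean lattices and $\hat\mu_{\min}$ of tensor products) the corresponding compatibility is a well-known open problem, so it cannot be invoked as a black box here.

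The paper avoids this issue altogether: it never needs to control non-decomposable directions in $\wedge^k\R^d$. It proves, by compactness of the Grassmannian of $k$-planes in $\R^d$, the uniform bound $\sup_{L\in M}\|a_tL\bw\|\gg e^{t\beta_k}$ only for \emph{decomposable} integer multivectors $\bw$ (i.e.\ sublattices of $\Z^d$ of each rank), with $\beta_k=\inf\{\tau_M(W):\dim_\R W=k\}$, and then feeds these hypotheses into the refined quantitative non-divergence estimate for \emph{successive minima} (\cite[Chapter 6]{saxce-hdr}, \cite[Theorem 5.3]{lmms}) rather than Theorem \ref{klw} applied in the exterior power; a Borel--Cantelli argument and passage to the convex minorant $\gamma_k$ of $\beta_k$ (which agrees with $\mathcal{G}^{\R}$ at its vertices) then give $\mu_k\ge\mathcal{G}^{\R}(k)$. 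If you want to keep your exterior-power route, you would have to actually prove the slope compatibility statement for $\tau_M$; otherwise the correct fix is to replace Theorem \ref{klw} by the refined non-divergence estimate, as the paper does.
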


\begin{figure}[h]
\begin{center}
\begin{tikzpicture}
\draw[->] (-1,0) -- (5,0);
\draw (0,-3) -- (0,0.2);
\draw[->] (0,0.37) -- (0,1);
\foreach \x in {0,...,4}
{
\draw (\x,-0.1) -- (\x,0.1) node[anchor=south] {\tiny{\x}};
}
\draw[red,thin] (0,0) -- (1,-1.2) -- (2,-1.7) -- (3,-2) -- (4,-.5) -- (5,1);
\filldraw[red] (0,0) circle (1pt);
\filldraw[red] (1,-1.2) circle (1pt);
\filldraw[red] (1.2,-0.9) node {$V_1^\Q$};
\filldraw[red] (2,-1.7) circle (1pt);
\filldraw[red] (2,-1.3)  node {$V_2^\Q$};
\filldraw[red] (3,-2) circle (1pt) ;
\filldraw[red] (2.9,-1.6)  node {$V_3^\Q$};
\filldraw[red] (5,1) circle (1pt);
\filldraw[red] (5.9,1)  node {$V_4^\Q=V_5^\R$};
\filldraw[red] (4,-.5) circle (1pt);

\draw[blue,thin] (0,0) -- (1,-2) -- (2,-2.7) -- (3,-3) -- (4,-2) -- (5,1);
\filldraw[blue] (0,0) circle (1pt);
\filldraw[blue] (1,-2) circle (1pt);
\filldraw[blue] (1,-2.5) node {$V_1^\R$};
\filldraw[blue] (2,-2.7) circle (1pt);
\filldraw[blue] (2,-3)  node {$V_2^\R$};
\filldraw[blue] (3,-3) circle (1pt) ;
\filldraw[blue] (3.2,-3.2)  node {$V_3^\R$};
\filldraw[blue] (4,-2) circle (1pt);
\filldraw[blue] (4.2,-2.3)  node {$V_4^\R$};

\draw[black,thin] (0,0) -- (1,-1.5) --  (3,-2.5) --  (5,1);
\filldraw[black] (1,-1.5) circle (1pt);
\filldraw[black] (3,-2.5) circle (1pt) ;
\filldraw[black] (2,-2) circle (1pt) ;
\filldraw[black] (4,-.75) circle (1pt);

\end{tikzpicture}
\caption{$\mathcal{G}_\mu$ lies between the rational and the real polygons}
\end{center}
\end{figure}
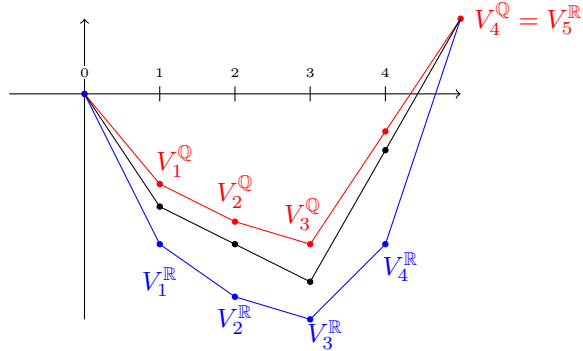

\begin{proof}The upper bound follows from Minkowski's theorem: the first $d_i$ successive minima in $a_tL\Z^d$ are smaller than those attained in $V^{\Q}_i$, so
\[
\mu_k(L) \leq \limsup_{t \to +\infty} \frac{1}{t} \sum_{k \leq d_i} \log \lambda_k(a_tL\Z^d) \leq \tau_L(V_i^{\Q}) \leq \tau_M(V_i^{\Q}).
\]
Since this holds for each $i=0,\ldots,h_{\Q}$, and the polygons are convex, we get that $\mathcal{G}_\mu^{sup}$ lies below $\mathcal{G}^{\Q}$.

The lower bound follows from a modified version of the proof of Theorem \ref{rate} that uses instead the refined quantitative non-divergence estimates for successive minima already mentioned and established in \cite[Chapter 6]{saxce-hdr} or  \cite[Theorem 5.3]{lmms}.  We now explain this briefly and refer to \cite[Theorem 7.3.1]{saxce-hdr} for the details of a more general statement.   Let $B$ be a small ball around some $x \in U$. Note that $\phi(B)$ is Zariski dense in $M$ and thus $\tau_M=\tau_{\phi(B)}$. Let $\beta_k$ be the infimum of all $\tau_M(W)$, where $W$ ranges among real subspaces with dimension $k$. In view of $(\ref{taulimit4})$ this implies that there is $I$ with $I(a) \ge \beta_k$ such that $(L\bw)_I \neq 0$ for some $L \in M$. By compactness of the grassmannian of $k$-dimensional subspaces there is $c>0$ such that $\max_{I, I(a) \ge \beta_k}\sup_{L \in M} \|(L\bw)_I\| \ge c \|\bw\|$ for all $W$. It follows in particular that $\sup_{L\in M} \|a_tL\bw\| \gg e^{t\beta_k}$ uniformly in $t>0$ and in non-zero integer valued $\bw$. Now let $\gamma_k \leq \beta_k$ be the largest such function of $k$ that is convex in $k$. Then the exact same Borel-Cantelli argument used in the proof of Claim 1 in Theorem \ref{rate}, using instead the refined quantitative non-divergence in the form of \cite[Theorem 5.3]{lmms} shows  that for $\mu$-almost every $L$ in $B$, $\mu_k(L) \ge \gamma_k$.  Since $\beta_{d_i}=\tau_M(V_i^{\R})$, we have shown that $\mathcal{G}_\mu$ lies above $\mathcal{G}^\R$.
\end{proof}

This proposition also holds for measures $\mu$ on $\GL_d(\R)$ which are good in the sense of \S \ref{mgm} below, see \cite[Theorem 7.3.1]{saxce-hdr}.

If $Zar(M)$ is defined over $\Q$, then by uniqueness of the Harder-Narasimhan filtration, we see that the real and rational filtrations coincide. In particular, in this case all three polygons coincide. If $Zar(M)$ is defined over $\QQ$, then the filtration over $\R$ is in fact defined over $\QQ$, and Theorem \ref{parametrici} asserts that $\mathcal{G}_\mu$ coincides with $\mathcal{G}^\Q$. However $\mathcal{G}^\R$ may be different. 

Similarly:

\begin{corollary} Let $M$ be as in Theorem \ref{subspace-main}, except we no longer assume that $Zar(M)$ is defined over $\QQ$. Let $a=(a_t=\exp(tA))_t$ be a diagonal flow. Assume that $\R^d$ is $M$-semistable with respect to $a$ (see Def. \ref{semi}). Then for $\mu$-almost every $L \in M$,
$$\liminf_{t \to +\infty} \lambda_1(a_tL\Z^d)\ge \frac{1}{d}\trace(A).$$
\end{corollary}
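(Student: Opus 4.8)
The plan is to obtain this as a special case of the ``Sandwich theorem'', Proposition~\ref{pol-comp}, reading the hypothesis ``$\R^d$ is $M$-semistable with respect to $a$'' — as throughout \S\ref{R-sec} — through the analogue of Definition~\ref{semi} for \emph{real} subspaces: $\tau_M(W)\ge\tfrac{\dim W}{d}\,\tau_M(\R^d)$ for every subspace $W\le\R^d$. (The literal, rational, reading of Definition~\ref{semi} would not suffice, since the relevant lower bound is governed by the real polygon $\mathcal{G}^\R$ and not by $\mathcal{G}^\Q$.) Here and below $\liminf_{t\to+\infty}\lambda_1(a_tL\Z^d)$ is shorthand for $\mu_1(L)=\liminf_{t\to+\infty}\tfrac1t\log\lambda_1(a_tL\Z^d)$, as in Theorem~\ref{rate}; the case $d=1$ being immediate, assume $d\ge 2$. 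The first step is to observe that $\tau_M(\R^d)=\trace(A)$ by~\eqref{taulimit2}, so real semistability says exactly that the real Harder--Narasimhan filtration of $\tau_M$ on $\Grass(\R^d)$ is trivial, $\{0\}<\R^d$; hence the real Grayson polygon $\mathcal{G}^\R$ is the single segment from $(0,0)$ to $(d,\trace(A))$, of slope $s_1^\R=\tfrac1d\trace(A)$. By Theorem~\ref{rate} with $k=1$, the quantity $\mu_1(L)$ has a $\mu$-almost sure value $\mu_1$, and Proposition~\ref{pol-comp} with $k=1$ (where $m_1=1$ since $V_0^\R=\{0\}$, $V_1^\R=\R^d$) gives $\mu_1\ge s_1^\R=\tfrac1d\trace(A)$, which is the assertion. (Minkowski's second theorem gives the opposite inequality, so the $\liminf$ is in fact a limit.)

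For a self-contained argument one redoes the lower-bound half of the proof of Proposition~\ref{pol-comp} in this case. Fix a closed ball $\bar B\subset U$; then $\phi(\bar B)$ is Zariski dense in $M$, so $\tau_{\phi(\bar B)}=\tau_M$ (the expansion rate depends only on the Zariski closure). By~\eqref{taulimit4} with $k=1$ (so $\wedge^1\R^d=\R^d$), real semistability means that for each line $\R v$ there are $L\in\phi(\bar B)$ and an index $i$ with $A_i\ge\tfrac1d\trace(A)$ and $(Lv)_i\ne 0$. The function $v\mapsto\max\{|(Lv)_i|/\|v\|:L\in\phi(\bar B),\ A_i\ge\tfrac1d\trace(A)\}$ is continuous and strictly positive on the compact space $\PP(\R^d)$, hence bounded below by some $c>0$; this yields $\sup_{L\in M}\|a_tLv\|\ge c\,e^{\frac{t}{d}\trace(A)}\|v\|$ for every non-zero $v\in\Z^d$ and every $t>0$, i.e. condition~\eqref{cbeta} holds for all $\beta<\tfrac1d\trace(A)$. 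Feeding this into the quantitative non-divergence estimate (Theorem~\ref{klw}) and running the Borel--Cantelli argument from the first claim in the proof of Theorem~\ref{rate} gives $\mu_1(\phi(x))\ge\beta$ for Lebesgue-almost every $x\in\bar B$ and every such $\beta$, whence $\mu_1\ge\tfrac1d\trace(A)$.

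The step I expect to be the main obstacle is precisely the compactness argument just indicated: turning the \emph{pointwise} inequality $\tau_M(\R v)\ge\tfrac1d\trace(A)$ into a \emph{uniform} lower bound with a single constant $c$ good for all lines at once, so that it can be plugged into the non-divergence machinery — this is where real semistability is genuinely used and where one must be careful in translating a statement about real subspaces into an effective estimate. Because only the first minimum is involved here, one avoids Mahler's lemma and the exterior-power bookkeeping present in the general inheritance principle; for the higher successive minima the same scheme applies, but only after replacing $\inf_{\dim W=k}\tau_M(W)$ by its largest convex minorant, exactly as in the proof of the lower bound in \S\ref{R-sec}.
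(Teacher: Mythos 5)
Your first paragraph is exactly the paper's route: the corollary is placed right after Proposition \ref{pol-comp} precisely because real semistability forces the real Grayson polygon $\mathcal{G}^{\R}$ to be the single segment of slope $\frac{1}{d}\trace(A)$, so the lower-bound half of the sandwich theorem gives $\mu_1\ge\frac{1}{d}\trace(A)$. Your decision to read ``$\R^d$ is $M$-semistable'' as semistability over \emph{real} subspaces is also the intended one (compare Proposition \ref{extvssemi}, where $\R^{n+m}$- and $\Q^{n+m}$-semistability are deliberately distinguished); as you say, the literal rational reading would not suffice -- e.g.\ for $M$ a single matrix built from a Liouville number the rational filtration is trivial while the conclusion fails.

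One caveat about your ``self-contained'' second paragraph: as written it only verifies the hypothesis of Theorem \ref{klw}, equivalently condition \eqref{cbeta}, for $k=1$, i.e.\ for vectors $v\in\Z^d$, whereas both \eqref{cbeta} and Theorem \ref{klw} quantify over pure $k$-vectors $\bw=v_1\wedge\dots\wedge v_k$ for \emph{every} $k\in\{1,\dots,d\}$. This is intrinsic to quantitative non-divergence: to bound the measure of the set where $\lambda_1$ is small one must rule out uniformly contracted sublattices of all ranks, so the exterior-power bookkeeping is not avoided merely because only the first minimum appears in the conclusion (what is avoided is Mahler's lemma, which serves to reduce $\mu_k$ to $\mu_1$). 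The gap is harmless: real semistability gives $\tau_M(W)\ge\frac{k}{d}\trace(A)$ for every $k$-dimensional real subspace $W$, and your compactness argument, run on each grassmannian (or on the compact set of unit pure $k$-vectors) exactly as in the paper's proof of Proposition \ref{pol-comp}, produces constants $c_k>0$ with $\sup_{L\in\phi(\bar B)}\|a_tL\bw\|\ge c_k e^{\frac{k}{d}\trace(A)t}$ for all nonzero integral pure $k$-vectors $\bw$ and all $t>0$, which is what the Borel--Cantelli step actually requires.
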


\subsection{Pl\"ucker closure}\label{schubert-sec}

In this paragraph we define the notion of Pl\"ucker closure of a subset $M \subset \GL_d(\R)$ and we explain why in Theorems \ref{subspace-main} and \ref{parametrici} instead of assuming that $Zar(M)$ is defined over $\QQ$, it is enough to assume that the \emph{Pl\"ucker closure} of $M$ is defined over $\QQ$. 

Let $M \subset \GL_d(\R)$ be a subset and $(E,\rho)$ be the direct sum of the exterior power representations of $\GL_d$, namely $E=\oplus_{k=1}^d \wedge^k \R^d$. We denote by $\mathcal{H}_M$  the $\R$-linear span in $\End(E)$ of all $\rho(g), g \in M$. We further define the \emph{Pl\"ucker closure} $\mathcal{H}(M)$ of $M$ as the inverse image of $\mathcal{H}_M$ under $\rho$ in $\GL_d(\R)$. Note that $\mathcal{H}(M)$ contains the Zariski closure $Zar(M)$ of $M$.  We say that $\mathcal{H}(M)$ is defined over $\QQ$ if $\mathcal{H}_N$ has a basis with coefficients in $\QQ$ in the canonical basis of $E$.  An obvious sufficient condition for this to hold is to ask for $Zar(M)$ to be defined over $\QQ$.

We also say that $M$ is \emph{Pl\"ucker irreducible} if $\rho(M)$ is not contained in a finite union of proper subspaces of $\mathcal{H}_M$ in $\End(E)$. Clearly Zariski irreducibility implies Pl\"ucker irreducibility. It is clear that Pl\"ucker irreducibility of $M$ is enough to guarantee that $\tau_M$ is submodular by the argument of Lemma \ref{tausm}. It is also clear that $\tau_M=\tau_{\mathcal{H}(M)}$. 

For simplicity of exposition in this paper, we have chosen to state the assumptions in our main theorems in terms of the Zariski closure of $M$, but in fact Theorems \ref{subspace-main} and \ref{parametrici} hold assuming only that the Pl\"ucker closure is defined over $\QQ$. The proof is verbatim the same as the one we have given, except that in Theorem \ref{rate}, we get the following slightly stronger statement: for $\mu$-almost every $L$ in $M$ and each $k$,
$$\mu_k(L)=\sup_{L' \in \mathcal{H}(M)} \mu_k(L').$$
Again the proof of this equality is exactly the one given for Theorem \ref{rate} when $k=1$. However the reduction to the case $k=1$ via Mahler's lemma no longer works here, because $\mathcal{H}(M)$ may differ from $\mathcal{H}(\rho(M))$. Instead one may use the enhanced version of the quantitative non-divergence estimates already mentioned in the proof of Proposition \ref{pol-comp}, namely \cite[Chapter 6]{saxce-hdr} or \cite[Theorem 5.3]{lmms} in place of Theorem \ref{klw}, which enables one to run the argument simultaneously for all $k$.  This shows that the almost sure value of each $\mu_k(L)$, and thus the polygon $\mathcal{G}_\mu$ defined in the previous paragraph, depend only on the Pl\"ucker closure of $M$. 

There is another notion of envelope of $M$ that is also natural to consider, namely the intersection $Sch(M)$ of all translates $\mathcal{S}_\sigma g$ of Schubert varieties $\mathcal{S}_\sigma=\overline{B\sigma B}$  containing $M$. Here $B$ is one of the $d!$ Borel subgroups containing the diagonal subgroup and the closure is the Zariski closure. This \emph{Schubert closure} contains the Pl\"ucker closure. It is easy to see from $(\ref{taulimit2bis})$  that the submodular function $\tau_M$ depends only on $Sch(M)$. Restricting to rational translates $\mathcal{S}_\sigma g$ with $g \in \GL_d(\Q)$ one obtains the \emph{rational Schubert closure} $Sch^{\Q}(M)$. The asymptotic shape and the exceptional subspaces appearing in Theorems \ref{subspace-main} and \ref{parametrici} depend only on $Sch^{\Q}(M)$. A natural question we could not answer is whether or not the main theorem remains valid under the (weaker) assumption that $Sch(M)$ is defined over $\QQ$. The answer is clearly yes when $Sch(M)$ is defined over $\Q$ as follows readily from Proposition \ref{pol-comp}.

\subsection{More general measures}\label{mgm}

In this paragraph, we define a class of measures called \emph{good measures}, which is wider than the family considered so far of push-forwards of the Lebesgue measure under analytic maps, and for which Theorems \ref{subspace-main}, \ref{parametrici} and \ref{rate} continue to hold. This class is very closely related to the so-called friendly measures of \cite{kleinbock-lindenstrauss-weiss, kleinbock-tomanov}: instead of being expressed in terms of an affine span, the non-degeneracy condition is defined in terms of local Pl\"ucker closures.

First we need to recall some piece of terminology. We fix a metric on $\GL_d(\R)$, say induced from the euclidean metric on the matrices $M_d(\R)$.  Given two positive parameters $C$ and $\alpha$, a real-valued function $f$ on the support of $\mu$ is called \emph{$(C,\alpha)$-good with respect to $\mu$} if for any ball $B$ in $\GL_d(\R)$ and all $\eps>0$
\[ \mu(\{x\in B, |f(x)|\leq\eps\|f\|_{\mu,B}\}) \leq C\eps^\alpha \mu(B),\]
where $\|f\|_{\mu,B}=\sup_{x\in B\cap\Supp\mu} |f(x)|$.
The measure $\mu$ is \emph{doubling} on a subset $X\subset \GL_d(\R)$ if there exists a constant $C'$ such that for every ball $B(x,r)\subset X$,
\[ \mu(B(x,2r)) \leq C'\mu(B(x,r)).\]
Then we say that a Borel measure $\mu$ is \emph{locally good} at $L \in \GL_d(\R)$ if there exists a ball $B$ around $L$ and positive constants $C,\alpha$ such that 
\begin{enumerate}[(i)]
\item The measure $\mu$ is doubling on $B$.
\item For every $k\in\{1,\dots,d\}$, for every pure $k$-vector $\bw=\bv_1\wedge\dots\wedge\bv_k$ in $\wedge^k\R^d$, and every $a\in \GL_d(\R)$, the map $y\mapsto \|ay\cdot\bw\|$ is $(C,\alpha)$-good on $B$ with respect to $\mu$.
\end{enumerate}

Recall next that given a subset $S\subset \GL_d(\R)$ and a point $x\in S$, the \emph{local Pl\"ucker closure} $\mathcal{H}_x(S)$ of $S$ at $x$ is the intersection over $r>0$ of the Pl\"ucker closures of $S\cap B(x,r)$:
\[ \mathcal{H}_x(S) = \bigcap_{r>0} \mathcal{H}(S\cap B(x,r)).\]

We may now define the class of good measures.

\begin{definition} A locally finite Borel measure $\mu$ on $\GL_d(\R)$ will be called a \emph{good measure} if $\Supp \mu$ is Pl\"ucker irreducible and if it satisfies the following assumptions for $\mu$-almost every $L$:
\begin{enumerate}
\item The measure $\mu$ is locally good at $L$;
\item The local Pl\"ucker closure of $\Supp\mu$ at $L$ is equal to that of $\Supp \mu$.
\end{enumerate}
\end{definition}

Of course, the most important example of a good measure is that of the push-forward of the Lebesgue measure under an analytic map.

\begin{proposition}[Analytic measures are good]\label{analyticU}
Let $n\in\N$, let $U$ be a connected open set in $\R^n$, and let $\varphi:U\to\GL_d(\R)$ be a real-analytic map.
Then the push-forward under $\varphi$ of the Lebesgue measure on $U$ is a good measure on the Zariski closure $M$ of $\varphi(U)$.
\end{proposition}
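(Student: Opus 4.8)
The plan is to verify the three defining conditions of a good measure for $\mu = \varphi_*(\mathrm{Leb}_U)$, namely that $\Supp\mu$ is Pl\"ucker irreducible, that $\mu$ is locally good at $\mu$-a.e. $L$, and that the local Pl\"ucker closure of $\Supp\mu$ at $\mu$-a.e. $L$ equals the global Pl\"ucker closure. The key point underlying all three is a standard identity-theorem argument for real-analytic maps: if $F$ is a real-analytic map on a connected open set $U$ and $F$ vanishes on a set of positive Lebesgue measure, then $F\equiv 0$ on $U$. This is the analogue for analytic maps of Zariski-density of $\varphi(B)$ that was already used repeatedly in the proof of Theorem~\ref{rate}.

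First, Pl\"ucker irreducibility. Let $(E,\rho)$ be the sum of exterior powers as in \S\ref{schubert-sec}. If $\rho(\varphi(U))$ were contained in a finite union $\bigcup_{j} H_j$ of proper subspaces of $\mathcal{H}_M$, then $U = \bigcup_j \varphi^{-1}(\rho^{-1}(H_j))$, and each preimage is the zero set of the real-analytic functions cutting out $H_j$ composed with $\rho\circ\varphi$. Since $U$ has positive measure and is a finite union, some preimage has positive measure, hence (by the identity theorem and connectedness of $U$) equals $U$; so $\rho(\varphi(U)) \subset H_j$, contradicting that $H_j$ is a proper subspace of the span $\mathcal{H}_M$ of $\rho(\varphi(U))$. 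Hence $\Supp\mu = M$ is Pl\"ucker irreducible. (One should also note $\Supp\mu = \overline{\varphi(U)}$ whose Pl\"ucker closure coincides with $\mathcal{H}(M)$.)

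Second, the local goodness (conditions (i) and (ii) in the definition of locally good). Since $\varphi$ is analytic and nonconstant on each coordinate — more precisely, since the functions $y \mapsto \|a\varphi(y)\cdot\bw\|^2$ are, locally, either identically zero or genuinely analytic with not-all-vanishing Taylor coefficients — one invokes the Kleinbock--Margulis theory of $(C,\alpha)$-good functions: a real-analytic function on a ball, not identically zero, is $(C,\alpha)$-good with respect to Lebesgue measure with constants depending only on its ``analytic complexity'' on a slightly larger ball; moreover finitely many such constants can be chosen uniformly over compact families. The relevant statement is exactly \cite[Proposition~3.4]{km} (or its local versions in \cite{kleinbock-anextension}): for a real-analytic $\varphi$ and any compact $K\subset U$ there exist $C,\alpha>0$ such that every polynomial-in-$\varphi$ function of bounded degree, in particular $y\mapsto \|a\varphi(y)\cdot\bw\|$ (which is the square root of a quadratic form in the Pl\"ucker coordinates of $a\varphi(y)$, themselves polynomials of degree $\le k$ in the entries of $\varphi(y)$), is $(C,\alpha)$-good on any ball $B\subset K$. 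The case $\|a\varphi\cdot\bw\| \equiv 0$ on $B$ is trivially good. The doubling property (i) is immediate since $\mu$ is the push-forward of Lebesgue measure by an analytic map and is therefore, on a small enough ball where $\varphi$ is a nice embedding onto its image, comparable to Lebesgue measure on $U$; more carefully, one uses that $\varphi$, being analytic on the connected $U$, has a well-defined generic rank $n'$, and on the complement of a proper analytic subset $\varphi$ is a submersion onto an $n'$-dimensional submanifold, so $\mu$ restricted to a small ball around a generic image point is (up to a bi-Lipschitz change of coordinates) a multiple of $n'$-dimensional Lebesgue measure, which is doubling. Thus (i) and (ii) hold at $\mu$-a.e.\ $L$.

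Third, the local-versus-global Pl\"ucker closure. For each $r>0$, $\mathcal{H}(\varphi(U)\cap B(L,r)) \supseteq \mathcal{H}_L(\Supp\mu)$ and is contained in $\mathcal{H}(M)$; I must show equality holds globally for a.e.\ $L$, i.e.\ $\mathcal{H}_{L}(M)=\mathcal{H}(M)$ for $\mu$-a.e.\ $L$. The span $\mathcal{H}_{\varphi(V)}$ over an open subset $V\subseteq U$ only increases as $V$ shrinks can be ruled out: for a fixed open $V_0$, $\dim \mathcal{H}_{\varphi(V)}$ is non-increasing as $V$ shrinks inside $V_0$ and integer-valued, hence locally constant on a dense open subset $U_0\subset U$; fix a point $y_0\in U_0$ where it is locally constant and equal to its minimal value $m$ on a ball $V_1\ni y_0$. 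Then for any $y\in V_1$ and small $V\ni y$ inside $V_1$, $\mathcal{H}_{\varphi(V)}\supseteq \mathcal{H}_{\varphi(V_1)}$ by minimality combined with $\mathcal{H}_{\varphi(V)}\subseteq\mathcal{H}_{\varphi(V_1)}$ forces equality, so $\mathcal{H}_{\varphi(V_1)}$ is the common local value on $V_1$. Finally by the identity theorem the linear span $\mathcal{H}_{\varphi(V_1)}$, being spanned by values of the analytic map $\rho\circ\varphi$ on an open set, equals the linear span $\mathcal{H}_{\varphi(U)} = \mathcal{H}_M$: indeed any linear form vanishing on $\rho(\varphi(V_1))$ is a real-analytic function on $U$ vanishing on the open set $V_1$, hence vanishes on all of $U$. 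Therefore $\mathcal{H}_L(M) = \mathcal{H}(M)$ for every $L$ in the open dense full-measure set $\varphi(V_1)$ ... but I need it for a.e.\ $L$ in $M$; one patches this by covering $U$ (up to a measure-zero set) by countably many such balls $V_1$ coming from the locally-constant-rank open set, getting $\mathcal{H}_L(M)=\mathcal{H}(M)$ for $\mu$-a.e.\ $L$.

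The main obstacle I anticipate is the careful handling of the doubling property and the relation $\Supp\mu\leftrightarrow M$ when $\varphi$ is not an immersion: one has to pass to the generic-rank locus and identify $\mu$ locally with a multiple of Lebesgue measure on a submanifold, and check that the $(C,\alpha)$-good property transfers under this identification (it does, because bi-Lipschitz reparametrisations and restriction to coordinate subspaces preserve the $(C,\alpha)$-good class up to changing constants). Apart from that bookkeeping, everything reduces to (a) the Kleinbock--Margulis estimate that analytic functions are uniformly good, and (b) the real-analytic identity theorem, both of which are standard; so the proof is essentially an assembly of known ingredients, and I would present it as such, citing \cite{km} and \cite{kleinbock-anextension} for (a).
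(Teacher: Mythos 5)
Your proposal is correct and follows essentially the same route as the paper: the heart of the matter is that the functions $u\mapsto\|a\varphi(u)\cdot\bw\|^2$ all lie in a single finite-dimensional space of analytic functions on $U$, so the Kleinbock--Margulis uniform $(C,\alpha)$-goodness and Federer/doubling results for analytic families apply (the paper cites \cite[Proposition~2.1]{kleinbock_dichotomy} for exactly this). The remaining conditions (Pl\"ucker irreducibility and the equality of local and global Pl\"ucker closures), which the paper leaves implicit, you correctly dispatch with the real-analytic identity theorem --- and in fact your third step can be shortened, since the identity theorem directly gives $\mathcal{H}_{\varphi(V)}=\mathcal{H}_M$ for \emph{every} nonempty open $V\subseteq U$, without any locally-constant-rank argument.
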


\begin{proof} Note that the maps of the form $u \mapsto \|a\varphi(u)\cdot \bw\|^2$ are linear combinations of products of matrix coefficients of $\varphi(u)$ and hence belong to a finite dimensional linear subspace of analytic functions on $U$ independent of the choice of $a \in \GL_d(\R)$ and $\bw \in \wedge^* \R^d$. So  \cite[Proposition~2.1]{kleinbock_dichotomy} applies.
\end{proof}

Theorem \ref{rate} holds for all good measures $\mu$ on $\GL_d(\R)$ with the same proof (suitably modified via the enhanced quantitative non-divergence estimates as mentioned in the previous paragraph). In fact the definition of good measures has been tailored precisely for Theorem \ref{rate} to hold. Thus the subspace theorem for manifolds and  the parametric subspace theorem, Theorems \ref{subspace-main} and \ref{parametrici}, hold for good measures $\mu$ such that the Pl\"ucker closure of $\Supp(\mu)$ is defined over $\QQ$.


\section{A generalization to number fields}
\label{section:nf}

Schmidt himself observed in \cite{schmidt_nf} that his theorem for the field $\Q$ of rationals implied a more general version for any number field $K$, and this was generalized shortly after by Schlickewei \cite{schlickewei_padic}, who gave a statement allowing also finite places.  In this section, we formulate a similar generalization of Theorems~\ref{subspace-main} and \ref{parametrici}.

\bigskip

For a place $v$ of a number field $K$, the completion of $K$ at $v$ is denoted by $K_v$.
As in Bombieri-Gubler \cite[\S~1.3-1.4]{bombierigubler} we use the following normalization for the absolute value $|\cdot|_v$ on $K_v$
\[ |x|_v := N_{K_v/\Q_v}(x)^{\frac{1}{[K:\Q]}},\]
where $\Q_v$ is the completion of $\Q$ at the place $v$ restricted to $\Q$ and $N_{K_v/\Q_v}(x)$ is the norm of $x$ in the extension $K_v/\Q_v$. The product formula then reads $\prod_v |x|_v=1$ for all $x \in K$.  If $S$ is a finite set of places of $K$ containing all archimedean ones, the ring $\mathcal{O}_{K,S} \subset K$ of $S$-integers is the set of $x \in K$ such that $|x|_v \leq 1$ for all places $v$ lying outside  $S$. Elements of its group of units are called $S$-units.

Let $d$ be a positive integer.
The $d$-dimensional space $K_v^d$ will be endowed with the supremum norm $\|\cdot\|_v$, i.e. for an element $\bx=(x_1^{(v)},\dots,x_d^{(v)})$ in $K_v^d$, $\|\bx\|_v=\max_{1\leq i\leq d} |x_i^{(v)}|_v$.
More generally, we let $K_S=\prod_{v\in S} K_v$ be the product of all completions of $K$ at the places of $S$, and if $\bx=(\bx^{(v)})_{v\in S}$ is an element of $K_S^d$, we define its \emph{norm} $\|\bx\|$, its \emph{content} $c(\bx)$ and \emph{height} $H(\bx)$ by
\[ \|\bx\| = \max_{v\in S} \|\bx^{(v)}\|_v
\mbox{,}\quad c(\bx)=\prod_{v\in S} \|\bx^{(v)}\|_v\quad\mbox{and}\quad  H(\bx)=\prod_{v\in S} \max\{1,\|\bx^{(v)}\|_v\}.\]

It is clear that  $c(\bx) \leq H(\bx)$ and $\|\bx\| \leq H(\bx) \leq \max\{1,\|\bx\|^{|S|}\}$. It follows from the product formula that $\|\bx\| \ge 1$ if $0 \neq \bx \in \mathcal{O}_{K,S}$.  The image of $\cO_{K,S}$ in $K_S$ under the diagonal embedding is discrete and cocompact in $K_S$ \cite[Chapter VII]{lang_ant}, and closed balls in $K_S$ for the norm are compact.  Furthermore, it is easily seen from Dirichlet's unit theorem that there is a constant $C=C(K,S)>0$ such that for all $\bx \in K_S^d$ with $c(\bx) \neq 0$, there is an $S$-unit $\alpha \in \mathcal{O}_{K,S}$ such that
\begin{equation}\label{diri}
e^{-C} \|\alpha \bx\|^{|S|} \leq c(\bx) \leq \|\alpha \bx\|^{|S|}.
\end{equation} 
For each $v\in S$, we denote by $\GL_d(K_v)$ the group of invertible $d\times d$ matrices with coefficients in $K_v$, and we set
\[ \GL_d(K_S) = \prod_{v\in S} \GL_d(K_v).\]

A product measure $\mu=\otimes_{v \in S} \mu_v$ on $\GL_d(K_S)$ will be called a \emph{good measure} if each $\mu_v$ is a good measure on $\GL_d(K_v)$. The definition of a good measure given in \S \ref{mgm} for $K_v=\R$ extends verbatim to other local fields $K_v$. 

Examples of good measures are provided by push-forwards of Haar measure under strictly analytic maps. Indeed Proposition \ref{analyticU} continues to hold for analytic maps $\phi:U \to \GL_d(K_v)$ whose coordinates are defined by convergent power series on a ball $U:=B(x,r)=\{y \in K_v^n, \max_{i=1}^n|y_i-x_i|\leq r\}$ in $K_v^n$. This is because, on the one hand the push-forward of Haar measure on $K_v$ under $\phi$ will be locally good everywhere  by \cite[Prop. 4.2]{kleinbock-tomanov}, and on the other hand, the Pl\"ucker closure of the image of a ball $B \subset U$ of positive radius is independent of the ball, because convergent power series that vanish on an open ball must vanish everywhere. 

Let  $E_v:=\oplus_{k\leq d} \wedge^k K_v^d$. We will say that the Pl\"ucker closure of $\mu$ is defined over $\QQ$ if for each $v \in S$, the subspace $\mathcal{H}_{\Supp \mu_v}$ of $\End(E_v)$ (see \S \ref{schubert-sec}) is defined over $\QQ$, i.e. is the zero set of a family of linear forms on $\End(E_v)$ with coefficients in $\QQ \cap K_v$. Clearly this is the case if for each $v \in S$, the Zariski closure of $\Supp \mu_v$ in $\GL_d(K_v)$ is defined over $\QQ$.  We will also denote by $\mathcal{H}(\mu)$ the cartesian product of all $\mathcal{H}(\mu_v)$, where $\mathcal{H}(\mu_v):=\{g \in \GL_d(K_v), \rho_v(g) \in \mathcal{H}_{\Supp \mu_v}\}$.

We are now ready to state:

\begin{theorem}[Subspace theorem for manifolds, $S$-arithmetic version]
\label{subspacenf}
Let $K$ be a number field, $S$ a finite set of places including all archimedean ones, $\mathcal{O}_{K,S}$ its ring of $S$-integers, and $d$ in $\N$.
Let $\mu$ be a good measure on $\GL_d(K_S)$ whose Pl\"ucker closure is defined over $\QQ$.
Then there are proper subspaces $V_1,\dots,V_r$ of $K^d$ such that for $\mu$-almost every $L$ and for every $\eps>0$, 
the inequality
\begin{equation}\label{totalK} \prod_{v\in S}\prod_{i=1}^d |L_i^{(v)}(\bx)|_v \leq \frac{1}{c(\bx)^{\eps}}\end{equation}
has only finitely many solutions $\bx \in \mathcal{O}_{K,S}^d\setminus(V_1\cup\dots\cup V_r)$ up to multiplication by an $S$-unit.
\end{theorem}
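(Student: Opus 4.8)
The plan is to deduce the $S$-arithmetic subspace theorem (Theorem~\ref{subspacenf}) from an $S$-arithmetic analogue of the parametric subspace theorem for manifolds, exactly mirroring the passage from Theorem~\ref{parametrici}/Corollary~\ref{paraweak} to Theorem~\ref{subspace-main} carried out in \S\ref{proof-sub}. The key new ingredient is that one now runs diagonal flows on the $S$-arithmetic space of lattices $\GL_d(K_S)/\GL_d(\mathcal{O}_{K,S})$, which is a homogeneous space of finite covolume thanks to the discreteness and cocompactness of the diagonal embedding of $\mathcal{O}_{K,S}$ recalled after \eqref{diri}. One would first reformulate the problem: an $S$-tuple of diagonal one-parameter semigroups $a=(a^{(v)})_{v\in S}$, $a^{(v)}_t=\operatorname{diag}(e^{A^{(v)}_1 t},\dots,e^{A^{(v)}_d t})$, acts on the lattice $L\cdot\mathcal{O}_{K,S}^d$, and one studies its successive minima. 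Since the proofs in \S\ref{section:nf} are, as the authors note, verbatim the same as those for $K=\Q$ modulo notational adjustments, I would establish: (a) submodularity of the (now $S$-indexed) expansion rate $\tau_\mu(V)=\max_{v\in S}\sum_{w\in S}\tau^{(w)}_{\mu_w}(L^{(w)}V)$ on the Grassmannian of $K$-rational subspaces of $K^d$, hence a Harder--Narasimhan filtration and Grayson polygon over $K$; (b) an inheritance principle (Theorem~\ref{rate}) over $K_S$, using the enhanced quantitative non-divergence estimates for good measures over local fields (from \cite{saxce-hdr,lmms,kleinbock-tomanov}) applied place-by-place and combined across $S$; and (c) the semistable/inductive argument of \S\ref{proof-para}, where the base case $M=\{L\}$ with $L\in\GL_d(\QQ\cap K_S)$ is handled by Schlickewei's $S$-arithmetic subspace theorem \cite{schlickewei_padic,bombierigubler} in place of Schmidt's.

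**Next I would set up the covering argument.** Fix $\eps>0$ and a solution $\bx\in\mathcal{O}_{K,S}^d$ of \eqref{totalK}; by \eqref{diri} we may normalize $\bx$ by an $S$-unit so that $c(\bx)\asymp \|\bx\|^{|S|}$. Write $\ell^{(v)}_i=\log|L^{(v)}_i(\bx)|_v$; inequality \eqref{totalK} says $\sum_{v\in S}\sum_i \ell^{(v)}_i \le -\eps\log c(\bx)$. As in the proof of Theorem~\ref{subspace-main}, truncating each $\ell^{(v)}_i$ from below at $-D\log\|\bx\|$ (for a suitable $D=D(K,S,d,\eps,\mu)$) does not spoil the inequality, and then subtracting the average over $i$ in each factor $v$ produces, for each $v$, a trace-zero vector $(b^{(v)}_1,\dots,b^{(v)}_d)$ with $\ell^{(v)}_i+b^{(v)}_i\le -\eps'\log\|\bx\|$ and $|b^{(v)}_i|\le D'\log\|\bx\|$. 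Rounding each $b^{(v)}_i/\log\|\bx\|$ to an integer (and correcting to keep the trace-zero condition in each factor) exhibits $a=(a^{(v)})_{v\in S}$ in a \emph{finite} set $P$ of integral unimodular diagonal semigroups, of size bounded purely in terms of $d,\eps,|S|$ and the norm bound $C=C(\mu)$, such that $\|a^{(v)}_t L^{(v)}\bx\|_v\le d\,e^{-t}$ for each $v\in S$ with $t$ the integer part of a fixed multiple of $\log\|\bx\|$. Hence $\|a_t L\bx\|\le d\,e^{-t}$ in the $K_S$-norm. Applying the $S$-arithmetic analogue of Corollary~\ref{paraweak} for each $a\in P$, one concludes that for $\mu$-a.e.\ $L$, every sufficiently large solution lies in some $V(a)$, and the finite family $\{V(a):a\in P\}$—which by the $S$-arithmetic version of Lemma~\ref{values} takes only boundedly many rational values—yields the $V_1,\dots,V_r$ of the statement.

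**The main obstacle** I anticipate is not any single deep new idea, but rather the correct bookkeeping of the multi-place diagonal dynamics: one must verify that the slope formalism of \S\ref{h-n-sec} applies to the $S$-indexed expansion rate with $K$-rational subspaces (submodularity follows as in Lemma~\ref{tausm} once one checks Pl\"ucker irreducibility of $\Supp\mu=\prod_v\Supp\mu_v$ over $K$, which is where the ``defined over $\QQ$'' hypothesis and Lemma~\ref{countable} over $F=\QQ\cap K_v$ enter to locate a $\QQ$-point with the generic expansion profile at every place simultaneously), and that the non-divergence/Borel--Cantelli machinery of Theorem~\ref{rate} survives the passage to $K_S$-lattices. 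This last point is standard in the $S$-arithmetic setting (Kleinbock--Tomanov \cite{kleinbock-tomanov}) but requires that the enhanced quantitative non-divergence estimate be available simultaneously for all exterior powers so that the inductive semistable argument of \S\ref{proof-para} can be run; granting \cite[Thm.~5.3]{lmms} (or \cite[Ch.~6]{saxce-hdr}) in that form, the rest is a routine, if lengthy, transcription. For this reason, and following the authors' stated convention in \S\ref{section:nf}, I would present the argument as a sketch: state the $S$-arithmetic versions of Theorems~\ref{parametrici} and \ref{rate}, indicate that their proofs are identical to the ones above after replacing $\Z^d$ by $\mathcal{O}_{K,S}^d$, $\R$-diagonal flows by $K_S$-diagonal flows, and Schmidt's theorem by Schlickewei's, and then give in full only the covering argument reducing \eqref{totalK} to finitely many instances of the parametric statement, since that is the only part where the normalization \eqref{diri} by $S$-units and the content $c(\bx)$ play a genuinely new role.
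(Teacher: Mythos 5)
Your proposal follows essentially the paper's own route: the paper establishes the $S$-arithmetic parametric theorem (Theorem~\ref{parametricnf}) by running the proof of Theorem~\ref{parametrici} verbatim with the Schmidt--Schlickewei $S$-arithmetic subspace theorem and the inheritance principle over $K_S$ (Theorem~\ref{inheritancenf}), and then deduces Theorem~\ref{subspacenf} by exactly your covering/rounding argument from \S\ref{proof-sub}, treating all $d|S|$ linear forms on an equal footing and using \eqref{diri} to normalize by $S$-units so that only large contents matter. The one adjustment you glossed over is purely notational: at non-archimedean places the diagonal semigroups must be built from powers of a uniformizer $\pi_v$ rather than real exponentials $e^{A_i^{(v)}t}$.
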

Here $L=(L^{(v)})_{v \in S} \in \GL_d(K_S)$ and each $L_i^{(v)}$ denotes the $i$-th row of the matrix $L^{(v)} \in \GL_d(K_v)$. Note that the left and right-hand sides of $(\ref{totalK})$ are unchanged if $\bx$ is changed into $\alpha \bx$ for some $S$-unit $\alpha$, so we will focus on the equivalence classes of solutions. The bound on the number $r$ of exceptional subspaces depends only on $d$ and $|S|$, and each subspace contains infinitely many (classes of) solutions to $(\ref{totalK})$.

When $\mu$ is a Dirac mass at a point $L=(L^{(v)})_{v \in S}$ with $L^{(v)} \in \GL_d(\QQ) \cap \GL_d(K_v)$, then the theorem is exactly the $S$-arithmetic Schmidt subspace theorem as stated in \cite[7.2.5]{bombierigubler}.



\subsection{Parametric version}

Theorem \ref{subspacenf} is deduced from Theorem \ref{parametricnf} below, which is a parametric version analogous to Theorem \ref{parametrici}. To formulate it, we need to define the $S$-arithmetic analogues of a lattice and its successive minima. A family of vectors $\bx_1,\dots,\bx_k$ in $K_S^d$ is said to be \emph{linearly independent} if it spans a free $K_S$-submodule of rank $k$. Equivalently the vectors $\bx_1^{(v)},\dots,\bx_k^{(v)}$ are linearly independent over $K_v$ for each place $v$.

\begin{definition}[Lattice in a number field]\label{subla}
For any positive integers $k\leq d$, we define a \emph{sublattice in $K_S^d$} of rank $k$ to be a discrete free
$\cO_{K,S}$-submodule of rank $k$ in $K_S^d$.
In other words, it is a subgroup $\Delta \leq K_S^d$ that can be written
\[ \Delta = \cO_{K,S}\bx_1\oplus\dots\oplus \cO_{K,S}\bx_k\]
for some linearly independent elements $\bx_i=(\bx_i^{(v)})_{v\in S}$ in $K_S^d$. If $k=d$ we say that $\Delta$ is a lattice in $K_S^d$.
\end{definition}

If $L$ is any element of $\GL_d(K_S)$, then $\Delta=L\cO_{K,S}^k$ is a sublattice of rank $k$ in $K_S^d$. Conversely they are all of this form.
%
%
%
Note that if $\bx_1,\dots,\bx_k$ are vectors from a lattice $\Delta$, they are linearly independent if and only if $\bx_1^{(v)},\dots,\bx_k^{(v)}$ are linearly independent for \emph{some} place $v$.

\begin{definition}[Successive minima] If $\Delta$ is a sublattice in $K_S^d$ and $k\in[1,d]$,
\[ \lambda_k(\Delta)=
\inf \{\lambda>0 \,|\, \exists \bx_1,\dots,\bx_k\in\Delta\ 
\mbox{linearly independent with}\
\forall j,\  c(\bx_j)\leq\lambda\} \] is its \emph{$k$-th successive minimum},
where $c(\bx_j)$ is the content defined earlier. \end{definition} It follows from $(\ref{diri})$ that we may have defined the successive minima using $\|\bx_j\|^{|S|}$ in place of $c(\bx_j)$ without much difference. In particular either definition will suit the theorem below.

The analogue of Theorem \ref{parametrici} now reads as follows.  Fix a diagonal element $a=\diag (a_i^{(v)})$ in $\GL_d(K_S)$, and consider the flow $a_t=a^t$, for every $t\in\N$.

\begin{theorem}[$S$-arithmetic strong parametric subspace theorem]
\label{parametricnf} Let $K$ be a number field and $S$ a finite set of places containing all archimedean ones. 
Let $(a_t)_{t \in\N}$ be a diagonal flow in $\GL_d(K_S)$ and $\mu$ a good measure on $\GL_d(K_S)$ whose Pl\"ucker closure is defined over $\QQ$. Then there are $K$-subspaces $0=V_0<V_1<\dots<V_h=K^d$ and real numbers $s_1 < \ldots <s_h$ such that for each $i=1,\ldots,h$ and for $\mu$-almost every $L$,
\begin{enumerate}
\item If $\dim V_{i-1}< k\leq \dim V_i$, then $\lim_{t \to +\infty}\frac{1}{t}\log\lambda_k(a_tL(\cO_{K,S})^d)=s_i$.
\item For all $t>0$ large enough, the first $\dim V_i$ successive minima of $a_tL(\cO_{K,S})^d$ are attained in $V_i$.
\end{enumerate}
In other words for every $\eps>0$ and $\mu$-almost every $L$, there is $t_{\eps,L}$ such that if $t>t_{\eps,L}$, $\ell=1,\ldots,h,$ and $\bx\in(\mathcal{O}_{K,s})^d$,
\begin{equation} c(a_tL\bx) \leq e^{t(s_\ell-\eps)}
\quad\Rightarrow\quad
\bx\in V_{\ell-1}.\end{equation}
\end{theorem}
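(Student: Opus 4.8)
The plan is to follow the strategy of \S\ref{proof-para} \emph{place by place}, applying the slope formalism of \S\ref{h-n-sec} over the number field $K$. First I would set up the $S$-arithmetic expansion rate: writing $a^{(v)}=\diag(a_1^{(v)},\ldots,a_d^{(v)})$ and $A_i^{(v)}=\log|a_i^{(v)}|_v$, for a non-zero $K_v$-subspace $W\leq K_v^d$ put $\tau^{(v)}(W)=\lim_{t\to+\infty}\tfrac1t\log\|(a^{(v)})^t\bw\|_v$ as in $(\ref{taulimit})$, and for a $K$-subspace $V\leq K^d$ set $\tau_\mu(V)=\sum_{v\in S}\max_{L^{(v)}\in\Supp\mu_v}\tau^{(v)}\big(L^{(v)}(V\otimes_K K_v)\big)$, with $\tau_L(V)$ the analogous sum for a single point $L=(L^{(v)})_{v\in S}$. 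Exactly as in $(\ref{taulimit4})$ this depends only on the Pl\"ucker closure $\mathcal{H}(\mu)$, and running the submodularity argument of Lemma \ref{tausm} at each place and summing — using Pl\"ucker irreducibility of $\Supp\mu$ to choose a single point at which $\tau_\mu=\tau_L$ on all four subspaces involved — shows that $\tau_\mu$ is submodular on $\Grass(K^d)$. Proposition \ref{h-n} then produces the Harder--Narasimhan filtration $0=V_0<V_1<\cdots<V_h=K^d$, and I would \emph{define} $s_i:=(\tau_\mu(V_i)-\tau_\mu(V_{i-1}))/(\dim V_i-\dim V_{i-1})$, which are strictly increasing by construction.

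I would then argue by induction on $i$ exactly as in \S\ref{proof-para}. An $S$-arithmetic second Minkowski theorem — the geometry of numbers over $K_S$, in the form used in the classical proofs of Schlickewei's theorem — gives, for every $L$, $\limsup_t\tfrac1t\sum_{k\le\dim V_i}\log\lambda_k\big(a_tL(\cO_{K,S})^d\big)\le\tau_L(V_i)\le\tau_\mu(V_i)$; combined with the induction hypothesis on $\ell<i$, this reduces both conclusions $(1)$ and $(2)$ of the theorem to the single \emph{liminf} estimate
\[ \liminf_{t\to+\infty}\ \tfrac1t\log\lambda_{\dim V_{i-1}+1}\big(a_tL(\cO_{K,S})^d\big)\ \ge\ s_i \]
for $\mu$-almost every $L$.

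For this estimate I would split into two cases as before. \emph{First case:} $\mu$ is a Dirac mass at $L=(L^{(v)})_{v\in S}$ with every $L^{(v)}\in\GL_d(\QQ)\cap\GL_d(K_v)$. A non-zero $\bx\in\cO_{K,S}^d$ with $c(a_tL\bx)\le e^{t(s_i-\eps)}$, after multiplication by a suitable $S$-unit so that $\|a_tL\bx\|^{|S|}$ is comparable to $c(a_tL\bx)$ via $(\ref{diri})$, satisfies $|a_k^{(v)}|_v^t\,|L_k^{(v)}(\bx)|_v\ll e^{t(s_i-\eps)}$ at every $v\in S$ and every $k$; passing to $K^d/V_{i-1}$ I form place by place the modified linear forms $\overline{M}_k^{(v)}$, $k\notin I^{(v)}$ (with $I^{(v)}=I_{V_{i-1}}$ the index set attached to $V_{i-1}$ and $a^{(v)}$ via $(\ref{taulimit2bis})$), which vanish on $V_{i-1}$, have coefficients in $\QQ\cap K_v$ and are linearly independent over $K_v$, and obtain $\prod_{v\in S}\prod_{k\notin I^{(v)}}|\overline{M}_k^{(v)}(\overline{\bx})|_v\ll c(\bx)^{-\eps'}$, where $\overline{\bx}=\bx\bmod V_{i-1}$. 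Schlickewei's $S$-arithmetic subspace theorem \cite[7.2.5]{bombierigubler} then confines all but finitely many classes of solutions to a finite union of proper $K$-subspaces of $K^d/V_{i-1}$; choosing one, $V/V_{i-1}$, of minimal dimension, the Harder--Narasimhan property gives $(\tau_\mu(V)-\tau_\mu(V_{i-1}))/(\dim V-\dim V_{i-1})\ge s_i$, so that the inequality restricted to $V$ has the same shape and a second application of the subspace theorem contradicts the minimality of $V$ — forcing all large solutions into $V_{i-1}$, which is the desired estimate. \emph{General case:} Theorem \ref{rate} holds for all good measures $\mu$ and, via the enhanced quantitative non-divergence of \cite[Theorem 5.3]{lmms} (see also \cite[Chapter 6]{saxce-hdr}), simultaneously for all $k$, so the $\mu$-almost sure value of $\tfrac1t\sum_{k\le\dim V_i}\log\lambda_k$ depends only on $\mathcal{H}(\mu)$; applying Lemma \ref{countable} at each place to avoid the proper $\QQ$-subvarieties of $\Supp\mu_v$ on which $\tau^{(v)}(\,\cdot\,)$ fails to be maximal on the relevant subspaces, one finds $L_0\in\mathcal{H}(\Supp\mu)$ whose components lie in $\GL_d(\QQ)\cap\GL_d(K_v)$ and satisfy $\tau_{L_0}=\tau_\mu$ on every $K$-subspace. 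The first case applies to $L_0$, and Theorem \ref{rate} then transfers the bound to $\mu$-almost every $L$, completing the induction; the final ``in other words'' reformulation is immediate from $(1)$ and $(2)$.

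The main difficulty, beyond the bookkeeping, is to carry out the reduction to Schlickewei's theorem cleanly at all places simultaneously: one must track the forms $\overline{M}_k^{(v)}$ at each $v$, control the content of $\overline{\bx}$ after the $S$-unit normalization of $(\ref{diri})$, and — crucially — ensure that the minimal-dimension obstruction subspace produced by the $S$-arithmetic subspace theorem is a genuine $K$-subspace (not merely a collection of $K_v$-subspaces), so that the Harder--Narasimhan comparison over $K$ applies. A secondary, essentially routine, point is to record the $S$-arithmetic second Minkowski theorem and the $S$-arithmetic quantitative non-divergence estimates in precisely the forms used above.
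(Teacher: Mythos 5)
Your proposal is correct and follows essentially the same route as the paper, which proves Theorem \ref{parametricnf} by repeating the argument of \S\ref{proof-para} verbatim with all forms $L_i^{(v)}$ on an equal footing, invoking Schlickewei's $S$-arithmetic subspace theorem \cite[7.2.5]{bombierigubler} in the single-point case, the $S$-arithmetic Minkowski theorem (Theorem \ref{minkowskisecond}), and the $S$-arithmetic inheritance principle (Theorem \ref{inheritancenf}) for the dynamical step, with the $V_i$ and $s_i$ given by the Harder--Narasimhan filtration and slopes of $\tau_M$ on $\Grass(K^d)$ as in \S\ref{HS-K}. The technical points you flag (content vs.\ norm via \eqref{diri}, goodness simultaneously for all $k$ via the refined non-divergence of \cite{lmms}) are exactly the adjustments the paper records.
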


The $K$-subspaces $V_i$, $i=1,\ldots,h$, appearing in Theorem \ref{parametricnf} are the terms of the Harder-Narasimhan filtration associated to $M:=\Supp \mu \subset \GL_d(K_S)$ and the quantities $s_i$ are its slopes. We describe this filtration in the next paragraph.

\subsection{Harder-Narasimhan filtrations for $K_S^d$}\label{HS-K}

In this paragraph we associate a submodular function on the grassmannian $\Grass(K^d)$ that generalizes the expansion rate  $\tau_M$ defined earlier in \S \ref{exp-rate-sec}. Given a $K_S$-submodule $V$ in $K_S^d$ we define its expansion rate as follows 
$$\tau(V):= \lim_{t \to +\infty} \frac{1}{t}\log c(a_t \bv)$$
where $\bv \in \wedge^* K_S^d = \prod_{v \in S} \wedge^* K_v $ and $c(\bv)$ is the content as defined earlier. Here we identify $\wedge^k K_v$ with $K_v^N$, $N={{d\choose k}}$ and use the standard basis $e_{i_1} \wedge \ldots \wedge e_{i_k}$ to define the norm, and note that $V=\prod_v V_v$, where each $V_v$ is a $K_v$-vector subspace represented by an element $\bv_v \in \wedge^* K_v $. So $\tau(V)$ is just the sum of the expansion rates of $V_v$ in $K_v^d$ over all places $v \in S$. For a subset $M\subset\GL_d(K_S)$ we also define, 
\[ \tau_M(V) = \max_{L\in M} \tau(LV).\]
As in $(\ref{taulimit4})$ above, we see that if $V$ represented by $\bv \in \wedge^* K_S^d$,
\[ \tau_M(V) = \max\{ I(a);\ I=(I_v)_{v \in S}, \exists L \in M, \forall v \in S, (L^{(v)} \bv^{(v)})_{I_v} \neq 0\},\]
where  $I_v \subset [1,d]$, $|I_v|=\dim V_v$, $I(a)=\sum_{v\in S} I_v(a)$ and $I_v(a)=\sum_{i \in I_v} \log |a_i^{(v)}|$, and $\bw_I$ is defined by the expression $\bw=\sum_{|I|=k} \bw_I e_I$, where $e_I=e_{i_1}\wedge \ldots \wedge e_{i_k}$ when $I=\{i_1,\ldots,i_k\}$.

We will say that $M$ is Pl\"ucker irreducible if its projection to $\GL_d(K_v)$ is Pl\"ucker irreducible for each $v \in S$. Under this assumption we see by the same argument as in Lemma \ref{tausm} that $\tau_M$ is submodular on the set of all $K_S$-submodules of $K_S^d$. 

If we restrict $\tau_M$ to the set of $K$-subspaces of $K^d$, i.e. the grassmannian $\Grass(K^d)$, then we thus obtain a well-defined notion of Harder-Narasimhan filtration, Grayson polygon and slopes. This gives what we will call the rational Grayson polygon $\mathcal{G}^K$. And a $K$-linear space $V$ will be $M$-semistable for the semigroup $(a_t)$ if for every $K$-subspace $W<V$,
\[ \frac{\tau_M(W)}{\dim W} \geq \frac{\tau_M(V)}{\dim V}.\]

But we may also consider $\tau_M$ as a function on the set of all $K_S$-submodules of $K_S^d$. Since the dimension of each projection to $K_v$ may not be the same for all $v$, we use the following definition for the dimension of a submodule $V=\prod_v V_v$:
$$\dim V= \frac{1}{|S|}\sum_{v \in S} \dim_{K_v} V_v$$
Then $\dim V$ is a modular function on the ``full grassmannian'', i.e. the set of all $K_S$-submodules of $K_S^d$. Thus Proposition \ref{h-n} and its proof are still valid and we obtain a ``full'' Harder-Narasimhan filtration and a ``full'' Grayson polygon $\mathcal{G}^{K_S}$, whose nodes now have $x$-coordinates in $\frac{1}{|S|}\N$.

\subsection{Inheritance principle and proofs}

In this section we discuss the proof of Theorems \ref{subspacenf} and \ref{parametricnf}. 
A basic ingredient is the $S$-arithmetic version of Minkowski's second theorem, which without paying attention to numerical constants, takes the following form:

\begin{theorem}[Minkowski's second theorem]\label{minkowskisecond}
Let $\Delta$ be a sublattice in $K_S^d$ as in $(\ref{subla})$. Then
\[ c(\bx_1\wedge \ldots \wedge \bx_k)^{\frac{1}{[K:\Q]}} \leq \lambda_1(\Delta)\cdot\ldots\cdot\lambda_k(\Delta) \ll c(\bx_1\wedge \ldots \wedge \bx_k)^{\frac{1}{[K:\Q]}},\]
where the constant involved in the Vinogradov notation $\ll$ depends only on $K$, $S$ and $d$, not on $\Delta$.
\end{theorem}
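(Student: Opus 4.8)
The plan is to reduce the statement to the classical Minkowski second theorem over $\R$ by passing to the Minkowski-style ``unfolding'' of $K_S$ into a Euclidean space. First I would recall that, writing $[K:\Q]=N$, the ring $\cO_{K,S}$ embeds diagonally as a full-rank lattice $\Lambda_0$ in the $\R$-algebra $K_\infty:=\prod_{v\mid\infty}K_v\cong\R^{r_1}\times\C^{r_2}$ when $S$ consists only of archimedean places; in the general $S$-arithmetic case one enlarges the picture by the finite places in $S$, using that $\cO_{K,S}$ is discrete and cocompact in $K_S$ by \cite[Chapter VII]{lang_ant}. The content $c(\bx)=\prod_{v\in S}\|\bx^{(v)}\|_v$ is, up to a bounded multiplicative factor coming from the equivalence of the sup-norm with any other norm and from the non-archimedean contributions, comparable to the $N$-th power of a genuine norm on the real vector space $\cO_{K,S}\otimes_\Z\R\cong\R^{N}$; this is where the exponent $\frac{1}{[K:\Q]}$ enters. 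Concretely, for a single vector $\bx\in K_S^d$ with $c(\bx)\neq 0$, after multiplying by a suitable $S$-unit (using \eqref{diri}) one has $c(\bx)$ comparable to $\|\iota(\bx)\|_{\mathrm{eucl}}^{N}$ for the image $\iota(\bx)$ in a fixed $Nd$-dimensional Euclidean space, with constants depending only on $K$, $S$, $d$.

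Next I would transport the sublattice $\Delta=\cO_{K,S}\bx_1\oplus\dots\oplus\cO_{K,S}\bx_k$ to a genuine lattice $\iota(\Delta)$ of rank $kN$ inside this Euclidean space $V\cong\R^{kN}$. The key point is to relate the $S$-arithmetic successive minima $\lambda_i(\Delta)$ to the usual successive minima $\nu_j(\iota(\Delta))$, $j=1,\dots,kN$, of the real lattice. On the one hand, $k$ linearly independent vectors $\bx_1,\dots,\bx_k\in\Delta$ (in the sense of Definition \ref{subla}, i.e. spanning a free $K_S$-submodule of rank $k$) produce, via $\iota$ and multiplication by the $N$ basis elements of $\cO_{K,S}\otimes\R$, a full set of $kN$ $\R$-linearly independent vectors, whose Euclidean lengths are all $\ll\lambda_i(\Delta)^{1/N}$ up to constants; this gives $\nu_{jN}(\iota(\Delta))\ll\lambda_j(\Delta)^{1/N}$ roughly speaking. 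Conversely, given $jN$ short real lattice vectors one extracts (by a pigeonhole/linear-algebra argument over the $N$-dimensional algebra) $j$ linearly independent elements of $\Delta$ of comparable content, yielding the reverse inequality $\lambda_j(\Delta)^{1/N}\ll\nu_{jN}(\iota(\Delta))$. Thus $\lambda_1(\Delta)\cdots\lambda_k(\Delta)$ is comparable, within constants depending only on $K,S,d$, to $\bigl(\prod_{j=1}^{kN}\nu_j(\iota(\Delta))\bigr)^{1/N}$.

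Now I would invoke the classical Minkowski second theorem for the real lattice $\iota(\Delta)$ in $V\cong\R^{kN}$: $\prod_{j=1}^{kN}\nu_j(\iota(\Delta))\asymp \covol(\iota(\Delta))$. Finally I would identify $\covol(\iota(\Delta))$ with $c(\bx_1\wedge\dots\wedge\bx_k)$: the covolume of $\iota(\Delta)$ equals (up to the fixed discriminant-type constant for $\cO_{K,S}$, which depends only on $K,S$) $\bigl\vert\det\bigr\vert$ of the period matrix, which in the wedge description is precisely $\prod_{v\in S}\|\bx_1^{(v)}\wedge\dots\wedge\bx_k^{(v)}\|_v=c(\bx_1\wedge\dots\wedge\bx_k)$. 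Combining gives $\lambda_1(\Delta)\cdots\lambda_k(\Delta)\asymp c(\bx_1\wedge\dots\wedge\bx_k)^{1/[K:\Q]}$, which is the claim; the one-sided bound $c(\bx_1\wedge\dots\wedge\bx_k)^{1/[K:\Q]}\leq\lambda_1(\Delta)\cdots\lambda_k(\Delta)$ is in fact the easy direction (Hadamard-type, using that the $\bx_i$ realizing the minima span the module). The main obstacle I anticipate is the careful bookkeeping in the comparison of successive minima across the algebra extension — in particular ensuring that ``linear independence over $K_S$'' of $k$ vectors translates cleanly into ``$\R$-linear independence of $kN$ vectors'' and back, and controlling all the implied constants so they depend only on $K$, $S$, $d$ and not on $\Delta$; the non-archimedean places in $S$ require a bit of extra care since there one works with $\cO_v$-modules and the ``unfolding'' is by a lattice in a $p$-adic vector space rather than a real one, but discreteness and cocompactness of $\cO_{K,S}$ in $K_S$ make this routine.
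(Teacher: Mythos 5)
The paper does not actually prove this statement: it refers to \cite[Theorem~C.2.11]{bombierigubler} for the purely archimedean case and to \cite{kleinbock-shi-tomanov} in general, with an explicit warning that the normalizations used there differ from the ones in this paper. Your restriction-of-scalars plan --- unfold $K_S^d$ into a real vector space, compare the $S$-arithmetic minima with the classical minima of the unfolded lattice in blocks of size $N=[K:\Q]$, apply the classical second theorem, and identify the covolume of the unfolded lattice with the content of the wedge --- is exactly the standard route taken in those references, so the architecture is correct and the reduction itself is sound.

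The genuine gap is in the exponent bookkeeping, which is the only delicate content here and precisely the point the paper flags as sensitive. With the normalization $|x|_v=N_{K_v/\Q_v}(x)^{1/[K:\Q]}$ adopted in Section~\ref{section:nf}, balancing by an $S$-unit as in $(\ref{diri})$ gives $c(\bx)\asymp\|\iota(\alpha\bx)\|_{\mathrm{eucl}}$ to the \emph{first} power, not $\|\iota(\bx)\|_{\mathrm{eucl}}^{N}$ as you assert, and the covolume of $\iota(\Delta)$ is $\asymp c(\bx_1\wedge\cdots\wedge\bx_k)^{N}$, not $c(\bx_1\wedge\cdots\wedge\bx_k)$. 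Moreover your three displayed comparabilities are mutually inconsistent: $c(\bx)\asymp\|\iota(\bx)\|^{N}$ means $\lambda_j\asymp\nu_{jN}^{N}$, which together with the block structure $\nu_{(j-1)N+1}\asymp\cdots\asymp\nu_{jN}$ forces $\lambda_1\cdots\lambda_k\asymp\prod_{i=1}^{kN}\nu_i$, contradicting your claim that this product is $\asymp\bigl(\prod_{i}\nu_i\bigr)^{1/N}$. A concrete test case ($d=k=1$, $K$ real quadratic, $S$ the two archimedean places, $\Delta=\cO_K\cdot(t,t)$) gives $\lambda_1(\Delta)=t=c(\bx_1)$ while $\covol(\iota(\Delta))\asymp t^{2}$; this pins down which exponents are right (your middle comparison is, your first and third are each off by an $N$-th power) and shows that the chain as written does not close up --- indeed it shows the final answer is sensitive to exactly this choice. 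Before the argument can be accepted you must fix one normalization of the absolute values, of the content on $\wedge^k K_S^d$, and of the Euclidean structure on the unfolded space, and rederive each comparison against it rather than guessing where $N$, $1/N$, or $|S|$ should appear.
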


The content $c(\bx_1\wedge \ldots \wedge \bx_k)$ is proportional to the covolume of $\Delta$ in its $K_S$-span. See \cite[Theorem~C.2.11, page 611]{bombierigubler} for a proof when $S$ has no non-archimedean places and \cite{kleinbock-shi-tomanov} in the general case with the caveat that the normalizations used in the latter paper differ from ours especially at the complex place, leading to a slightly different definition of the successive minima.

The derivation of Theorem \ref{subspacenf} from Theorem \ref{parametricnf} works verbatim as that of Theorem \ref{subspace-main} from Theorem \ref{parametrici} given in \S \ref{proof-sub}. One replaces $d$ with $d|S|$ and treats all linear forms $L_i^{(v)}$ on an equal footing. The constant $D$ needs to be increased appropriately and at non-archimedean places $v$ the exponential used in the definition of the flow will be replaced by a power of a uniformizer $\pi_v$ of $K_v$, namely $a^{(v)}_t=\diag(\pi_v^{n_{1,v}t},\ldots,\pi_v^{n_{d,v} t})$ for integers $n_{i,v}$. One needs also to recall that in view of $(\ref{diri})$, for each $T>0$ there are only finitely many classes of $\bx \in \mathcal{O}_{K,S}^d$ with $c(\bx) \leq T$, so we may assume that $c(\bx)$ is large. The number $r$ of distinct $V_i$ obtained is similarly bounded by $b(2^{d|S|})$ as in Lemma \ref{values}.

Now the proof of Theorem \ref{parametricnf} is again verbatim as that of Theorem \ref{parametrici}, treating all $L_{i}^{(v)}$ on an equal footing and keeping the argument unchanged. One needs to invoke the $S$-arithmetic subspace theorem (in the form of Theorem \ref{subspacenf} for a single point, or as \cite[7.2.5]{bombierigubler}) in place of the ordinary subspace theorem. For the dynamical ingredient at the end, one applies instead the following generalization of Theorem \ref{rate}.

\begin{theorem}[Inheritance principle]
\label{inheritancenf}
Let $K$ be a number field, $S$ a finite set of places containing all archimedean places, and $d$ in $\N$.
Let $(a_t)$ be a diagonal one-parameter semigroup in $\GL_d(K_S)$, and $\mu$ a good measure on $\GL_d(K_S)$ with $\mathcal{H}(\mu)=\prod_{v\in S} \mathcal{H}(\mu_v)$ the Pl\"ucker closure of its support. For  $k \in [1,d]$ let $$\mu_k(L):=\liminf_{t\to\infty} \frac{1}{t} \sum_{i\leq k} \log\lambda_i(a_tL\cO_{K,S}^d)$$
for each $L \in \GL_d(K_S)$. Then, for $\mu$-almost every $L$,
\[ \mu_k(L)= \sup_{L'\in \mathcal{H}(\mu)} \mu_k(L').\]
\end{theorem}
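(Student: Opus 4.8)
\textbf{Proof plan for Theorem~\ref{inheritancenf}.} The plan is to run the exact argument of Theorem~\ref{rate} (the $K=\Q$ case) simultaneously over all places $v \in S$, using the enhanced non-divergence estimates of \cite[Chapter~6]{saxce-hdr} or \cite[Theorem~5.3]{lmms} in place of Theorem~\ref{klw} so that all values of $k$ are handled at once rather than being reduced to $k=1$ via Mahler's lemma. Since $\mu=\otimes_{v\in S}\mu_v$ is a product measure and the content $c(\cdot)$ on $K_S^d$ factors as a product of the local norms, the successive minima of $a_tL\cO_{K,S}^d$ are controlled, via the $S$-arithmetic Minkowski second theorem (Theorem~\ref{minkowskisecond}), by the quantities $\sup_{g} c(a_tg\bw)$ for $\bw \in \wedge^k\cO_{K,S}^d \setminus \{0\}$; here $\bw$ has nonzero projection at every place, so its local norms are all $\geq 1$ up to the product formula, and discreteness/cocompactness of $\cO_{K,S}$ in $K_S$ plays the role that $\Z \subset \R$ played before.

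\textbf{Key steps.} First I would introduce, exactly as in the proof of Theorem~\ref{rate}, for each $\beta \in \R$ the condition $(\mathcal{C}_\beta)$ now reading: there is $c>0$ such that for all $k$, all $t>0$ and all $\bw = \bx_1 \wedge \cdots \wedge \bx_k \in \wedge^k\cO_{K,S}^d \setminus \{0\}$, $\sup_{g \in S}c(a_tg\bw) \geq c\,e^{k\beta t}$, and set $\beta_k(S)$ to be the supremum of admissible $\beta$ restricted to $k$-vectors (or keep the joint quantity as before, the enhanced estimates letting one read off each $\mu_k$). The \emph{first claim} — that $\mu_k(L) \leq \beta_k(S)$ for all $L \in S$, with equality $\mu$-a.e.\ when $S = \mathrm{Supp}(\mu_v)$-localized to a small product of balls — goes through verbatim: the upper bound is Theorem~\ref{minkowskisecond} applied to the sublattice spanned by $a_tg\bx_1,\dots,a_tg\bx_k$, and the lower bound is the Borel--Cantelli argument fed by the enhanced quantitative non-divergence estimate (which is itself a product statement over $S$ since $\mu$ is a product measure and each $\mu_v$ is locally good). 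The \emph{second claim} — that $\beta_k(S) = \beta_k(\mathcal{H}(S) \cap K)$ for bounded $S$ and compact $K \supset S$ — also transfers: one works in $\mathcal{L}(\mathcal{H}_{S_v}, E_v)$ at each place, comparing the norms $|L|_X$ for $X = \rho_v(S_v)$ and $X' = \rho_v(\mathcal{H}(S_v) \cap K_v)$, which have the same linear span by definition of the Pl\"ucker closure, and takes products over $v \in S$. Finally, connectedness of $\mathrm{Supp}(\mu_v)$ at the archimedean places and Pl\"ucker irreducibility of the support glue the local statements into the global almost-sure value, and the observation that the Pl\"ucker closure of $\mathrm{Supp}(\mu_v) \cap B$ is independent of the ball $B$ (analyticity / the local Pl\"ucker closure condition in the definition of a good measure) upgrades the local equality to the equality with $\sup_{L' \in \mathcal{H}(\mu)}$.

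\textbf{Main obstacle.} The genuinely new input compared with Theorem~\ref{rate} is the \emph{enhanced} quantitative non-divergence estimate at a general place $v$ (in particular a non-archimedean one), which is what allows one to bypass the Mahler-lemma reduction to $k=1$ and instead obtain all successive minima jointly; this is needed here precisely because, as already noted in \S\ref{schubert-sec}, $\mathcal{H}(M)$ need not equal $\mathcal{H}(\rho_k(M))$, so the clean wedge-functoriality argument of Theorem~\ref{rate} is unavailable. I would therefore cite \cite[Chapter~6]{saxce-hdr} or \cite[Theorem~5.3]{lmms} for this estimate over each local field $K_v$, check that its hypotheses (doubling measure, $(C,\alpha)$-goodness of $y \mapsto \|a y \cdot \bw\|_v$) are exactly what the definition of a good measure on $\GL_d(K_v)$ supplies, and verify that the conclusion multiplies correctly over the product $K_S = \prod_{v \in S} K_v$ so that a single Borel--Cantelli pass over $t \in \N$ gives the almost-sure lower bound for every $\mu_k$ at once. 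Everything else is a notational translation of the argument already written out for $\Q$.
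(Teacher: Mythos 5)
Your proposal matches the paper's own (sketched) argument essentially step for step: rerun the proof of Theorem~\ref{rate} with the norm replaced by the content, use the $S$-arithmetic Minkowski theorem (Theorem~\ref{minkowskisecond}) for the first claim, exploit the product structure of $\Supp(\mu)$ so that the comparison~(\ref{inZar}) still holds place by place, and invoke the refined quantitative non-divergence estimates of \cite[Chapter~6]{saxce-hdr}/\cite[Theorem~5.3]{lmms} to treat all $k$ simultaneously, since the Mahler-lemma reduction fails for Pl\"ucker closures. The only cosmetic difference is that the paper cites the Kleinbock--Tomanov $S$-arithmetic extension of Theorem~\ref{klw} for the case $k=1$ and the refined estimates only for $k>1$, whereas you use the refined estimates uniformly; this changes nothing of substance.
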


\begin{proof}[Sketch of proof]When $k=1$ the proof of this result is identical to the proof we gave of Theorem \ref{rate} with the following adjustments. Theorem \ref{klw} was extended to this context by Kleinbock-Tomanov in \cite[\S 8.4]{kleinbock-tomanov} (technically speaking only for $K=\Q$, but the general case is entirely analogous).  The norm there and in the definition (\ref{cbeta}) of $\beta$ must be replaced by the content of the corresponding vector. Theorem \ref{minkowskisecond} must be used in place of the original Minkowski theorem to prove the first claim. Also $(\ref{inZar})$  continues to hold with the content in place of the norm for subsets of $\GL_d(K_S)$ that are cartesian products of subsets of $\GL_d(K_v)$ for $v \in S$, which is the case for $\Supp(\mu)$ by definition of a good measure. So the second claim also holds. The case $k>1$ is analogous, but as in the proof of Prop. \ref{pol-comp}, one needs to use the refined quantitative non-divergence estimate proved in \cite[Chapter 6]{saxce-hdr} and \cite[Theorem 5.3]{lmms} instead of Theorem \ref{klw}.
\end{proof}




We end this section by stating the analogue of Proposition \ref{pol-comp} in the $S$-arithmetic context. The following describes  what is left of Theorem \ref{parametricnf} if we remove the assumption that the Pl\"ucker closure of the good measure $\mu$ is defined over $\QQ$. In \S \ref{HS-K} we have defined two Grayson polygons: the rational one $\mathcal{G}^{K}$ coming from $\Grass(K^d)$ and the full one $\mathcal{G}^{K_S}$ coming from the full grassmannian of all $K_S$-submodules. And of course we have as before the polygon $\mathcal{G}_\mu$ with nodes $(k,\mu_k)$, $k \in [1,d]$, where $\mu_k$ is the $\mu$-almost sure value of $\mu_k(L)$ provided by Theorem \ref{inheritancenf} and the polygon $\mathcal{G}_\mu^{sup}$  with nodes $(k,\mu^{sup}_k)$, where $\mu^{sup}_k$ is the supremum of $\mu^{sup}_k(L)$ over $L \in M$ and $\mu^{sup}_k(L)$ is defined as $\mu_k(L)$ with a limsup in place of liminf. 

\begin{proposition}[$S$-arithmetic sandwich theorem]
The polygons $\mathcal{G}_\mu, \mathcal{G}_{\mu}^{sup}$ lie in between $\mathcal{G}^K$ and $\mathcal{G}^{K_S}$, i.e. $$\mathcal{G}^{K_S} \leq \mathcal{G}_\mu \leq \mathcal{G}_{\mu}^{sup} \leq \mathcal{G}^{K}.$$
\end{proposition}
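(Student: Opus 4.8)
The plan is to transcribe, almost verbatim, the proof of Proposition~\ref{pol-comp} into the $S$-arithmetic setting, using the dictionary $\Z^d\rightsquigarrow\cO_{K,S}^d$, Euclidean norm $\rightsquigarrow$ content $c(\cdot)$, Minkowski's second theorem $\rightsquigarrow$ Theorem~\ref{minkowskisecond}, Theorem~\ref{klw} $\rightsquigarrow$ its $S$-arithmetic refinement for successive minima (the Kleinbock--Tomanov estimate \cite[\S 8.4]{kleinbock-tomanov} together with \cite[Theorem~5.3]{lmms} / \cite[Chapter~6]{saxce-hdr}), and Theorem~\ref{rate} $\rightsquigarrow$ Theorem~\ref{inheritancenf}. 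As a preliminary reduction I would fix, as in \S\ref{R-sec}, a small product ball $B=\prod_{v\in S}B_v$ around a $\mu$-generic point of $\Supp\mu$ at which the local Pl\"ucker closure of $\Supp\mu$ equals that of $\Supp\mu$, so that $\tau_{\Supp\mu\cap B}=\tau_M$ and $M$ may be replaced by $\Supp\mu\cap B$. I would also record that $c(\bw)\ge 1$ for every non-zero $\bw\in\wedge^k\cO_{K,S}^d$ (product formula together with $|x|_v\le 1$ for $x\in\cO_{K,S}$, $v\notin S$), which is what lets the ``integer vectors'' appearing in the non-divergence estimates have content bounded below. Since $\mathcal{G}_\mu\le\mathcal{G}_\mu^{sup}$ is trivial, only the two outer inequalities require an argument, and throughout $\mathcal{G}^K$ and $\mathcal{G}^{K_S}$ are normalised as in \S\ref{HS-K}.

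For the upper bound $\mathcal{G}_\mu^{sup}\le\mathcal{G}^K$: let $0=V_0<\dots<V_h=K^d$ be the rational Harder--Narasimhan filtration of \S\ref{HS-K} and set $\widehat V_i=V_i\otimes_K K_S$, so that $\tau_M(\widehat V_i)=\tau_M(V_i)$. Because $V_i$ is defined over $K$, the intersection $a_tL\cO_{K,S}^d\cap a_tL\widehat V_i$ is a full sublattice of $a_tL\widehat V_i$ of rank $\dim V_i$, and its first $\dim V_i$ successive minima dominate those of $a_tL\cO_{K,S}^d$; hence by Theorem~\ref{minkowskisecond}, $\tfrac1t\sum_{j\le\dim V_i}\log\lambda_j(a_tL\cO_{K,S}^d)$ is, up to $o(1)$, bounded above by $\tfrac1t$ times the logarithm of the covolume of that sublattice, which tends to $\tau_L(\widehat V_i)\le\tau_M(V_i)$. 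Thus $\mu^{sup}_{\dim V_i}(L)\le\tau_M(V_i)$ for each $i$ and each $L$. Exactly as in Proposition~\ref{pol-comp}, $\mu^{sup}_\bullet(L)$, and hence $\mathcal{G}_\mu^{sup}$ after taking the supremum over $L$, is convex; since $\mathcal{G}^K$ is convex with nodes $(\dim V_i,\tau_M(V_i))$, the inequality at the nodes propagates and forces $\mathcal{G}_\mu^{sup}\le\mathcal{G}^K$ on all of $[0,d]$.

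For the lower bound $\mathcal{G}^{K_S}\le\mathcal{G}_\mu$: for $k\in\{1,\dots,d\}$ I would set $\beta_k:=\inf\tau_M(W)$ over free rank-$k$ $K_S$-submodules $W$. Representing $W$ by a pure vector $\bw$, the content version of the formula for $\tau_M$ in \S\ref{HS-K}, together with the compactness of the product of Grassmannians (and $c(\bw)\ge1$), yields a constant $c_0>0$ with $\sup_L c(a_tL\bw)\ge c_0 e^{t\beta_k}$ for all non-zero $\bw\in\wedge^k\cO_{K,S}^d$ and all $t>0$. Let $\gamma$ be the largest convex minorant of $k\mapsto\beta_k$ on $[0,d]$. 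The Borel--Cantelli argument from the proof of the first claim of Theorem~\ref{rate}, now fed with the refined $S$-arithmetic quantitative non-divergence estimate for successive minima in place of Theorem~\ref{klw}, then gives $\mu_k(L)\ge\gamma_k$ for $\mu$-almost every $L$ and each $k$. Finally $\mathcal{G}^{K_S}$ is convex and lies below every point $(\dim V,\tau_M(V))$, in particular below $(k,\beta_k)$ at each integer $k$, hence $\mathcal{G}^{K_S}\le\gamma$ on $[0,d]$; so $\mathcal{G}^{K_S}(k)\le\gamma_k\le\mu_k$ at every integer $k$, and the convexity of $\mathcal{G}^{K_S}$ together with the piecewise-linearity of $\mathcal{G}_\mu$ between consecutive integers upgrades this to $\mathcal{G}^{K_S}\le\mathcal{G}_\mu$ on $[0,d]$.

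I expect the only genuine obstacle to be the availability, in the general local-field / $S$-arithmetic setting, of a quantitative non-divergence estimate controlling \emph{all} the successive minima simultaneously; everything else is a faithful translation of the proof of Proposition~\ref{pol-comp}. This ingredient is supplied by \cite[Chapter~6]{saxce-hdr} (or \cite[Theorem~5.3]{lmms}) together with the Kleinbock--Tomanov extension of Theorem~\ref{klw}, so no new work is really needed. A secondary, purely bookkeeping, point is to reconcile the integer-indexed successive minima (and the $[K:\Q]$-th root appearing in Theorem~\ref{minkowskisecond}) with the $\tfrac1{|S|}\N$-indexed full polygon $\mathcal{G}^{K_S}$; this causes no difficulty, since the lower bound only claims domination by $\mathcal{G}^{K_S}$, whereas the argument above in fact produces the a priori larger minorant $\gamma$ coming from the balanced submodules.
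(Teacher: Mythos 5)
Your proposal is correct and follows exactly the route the paper intends: the paper's own proof of this proposition consists of the single remark that it is \emph{mutatis mutandis} the proof of Proposition~\ref{pol-comp}, and your transcription (content in place of norm, Theorem~\ref{minkowskisecond} in place of Minkowski, the Kleinbock--Tomanov/refined non-divergence estimates in place of Theorem~\ref{klw}, and Theorem~\ref{inheritancenf} in place of Theorem~\ref{rate}) is precisely that translation, carried out in more detail than the paper provides. Your closing remarks on the $\tfrac1{|S|}\N$-indexing of $\mathcal{G}^{K_S}$ and on working with the convex minorant $\gamma$ of the $\beta_k$ correctly identify and dispose of the only points where the translation is not literally word-for-word.
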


Again the proof is mutatis mutandis that of Proposition \ref{pol-comp}.

\section{Examples and Applications}\label{app}

In this last section we present a number of applications of the main theorem. 

\subsection{Sprindzuk conjecture}\label{sprin}

We begin by the demonstration of how the main result of Kleinbock and Margulis \cite[Conjectures H1, H2]{km}, namely the Sprindzuk conjecture, can easily be deduced from Theorem \ref{subspace-main}. Let us recall this result. For $\bq \in \Z^d$ we define $\Pi_+(\bq):=\prod_1^d |q_i|_+$, where $|x|_+:=\max\{1,|x|\}$ for all $x \in \R$. A point $y \in \R^n$ is said to be very well multiplicatively approximable (or VWMA for short) if for some $\eps>0$ there are infinitely many $\bq \in \Z^d$ such that \begin{equation}\label{vwm-def}|p+\bq \cdot y| \cdot \Pi_+(\bq)  \leq \Pi_+(\bq)^{-\eps}.\end{equation} A manifold $M \subset \R^d$ is said to be \emph{strongly extremal} if Lebesgue almost every point on $M$ is not VWMA.

\begin{theorem}[Kleinbock-Margulis \cite{km}]\label{km-thm} Let  $U \subset \R^n$ be a connected open set and $f_1,\ldots,f_{d}:U \to \R$ be real analytic functions, which together with $1$, are linearly independent over $\R$. Write $f=(f_1,\ldots,f_d)$. Then $M:=\{f(x), x \in U\}$ is strongly extremal.
\end{theorem}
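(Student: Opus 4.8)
The plan is to encode the multiplicative approximation problem by a diagonal flow on a space of lattices and apply Corollary~\ref{paraweak}. For $y=(y_1,\dots,y_d)\in\R^d$ let $h_y\in\GL_{d+1}(\R)$ be the unipotent matrix whose rows are the linear forms $L_0(\bx)=x_0+x_1y_1+\dots+x_dy_d$ and $L_i(\bx)=x_i$ $(1\le i\le d)$, so that $h_y$ fixes $e_1,\dots,e_d$ and $h_ye_i=e_i+y_ie_0$. Set $\widetilde M=\{h_{f(x)}:x\in U\}=\phi(U)$ with $\phi=h\circ f$ analytic, and let $\mu$ be the push-forward of Lebesgue measure on $U$; then ``$M$ strongly extremal'' says exactly that $f(x)$ fails to be VWMA for $\mu$-almost every $L=h_{f(x)}$. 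The first step is to observe that the hypothesis that $1,f_1,\dots,f_d$ be linearly independent over $\R$ is equivalent to saying that $f(U)$ lies in no affine hyperplane of $\R^d$, i.e.\ the affine hull of $f(U)$ is all of $\R^d$. Since $\rho(h_y)$ is an affine function of $y$ (here $\rho$ denotes the exterior-power representation), it follows that the linear span $\cH_{\widetilde M}$ of the operators $\rho(h_{f(x)})$ equals $\cH_H$, where $H=\{h_y:y\in\R^d\}$ is the unipotent $\Q$-subgroup of $\GL_{d+1}$: indeed $\cH_{\widetilde M}$ is spanned by $\rho(h_{f(x_0)})$ together with the differences $\rho(h_{f(x)})-\rho(h_{f(x_0)})$, which span the image of the linear part of $\rho\circ h$ precisely because $f(U)-f(x_0)$ spans $\R^d$. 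As $\cH_H$ admits an integral basis, the Pl\"ucker closure of $\widetilde M$ is defined over $\Q$, so Theorem~\ref{parametrici} and Corollary~\ref{paraweak} apply to $\widetilde M$, and moreover $\tau_{\widetilde M}=\tau_H$ on the rational Grassmannian.

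The core of the argument will be the following semistability claim: \emph{for every unimodular diagonal semigroup $a_t=\diag(e^{A_0t},\dots,e^{A_dt})$ with $A_i<0$ for $i\ge1$ and $A_0=-(A_1+\dots+A_d)$, the space $\R^{d+1}$ is $\widetilde M$-semistable with respect to $a$.} Since $\tau_{\widetilde M}(\R^{d+1})=\trace(\log a_1)=0$, by Definition~\ref{semi} this amounts to showing $\tau_H(V)\ge0$ for every non-zero rational $V\le\R^{d+1}$, say of dimension $k$ with Pl\"ucker vector $\bv\in\wedge^k\R^{d+1}$. For this one uses the explicit action of $h_y$ on $\wedge^k\R^{d+1}$: because $N_y:=h_y-\mathrm{id}$ satisfies $N_y^2=0$, this action fixes every basis vector $e_J$ with $0\in J$, and sends $e_J$ with $0\notin J$ to $e_J$ plus a $y$-linear combination of basis vectors $e_{J'}$ with $0\in J'$. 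Hence, starting from any index set $I_0$ with $\bv_{I_0}\ne0$, one obtains --- trivially if $0\in I_0$, and by varying a single coordinate of $y$ if $0\notin I_0$ --- an index set $I$ with $0\in I$ and $(h_y\bv)_I\ne0$ for some $y$; by \eqref{taulimit4} this gives $\tau_H(V)\ge A_I=A_0+\sum_{i\in I\setminus\{0\}}A_i\ge A_0+\sum_{i=1}^dA_i=0$. Granting the claim, Proposition~\ref{h-n} shows that the Harder--Narasimhan filtration of $\widetilde M$ for such an $a$ is trivial, whence $V(a)=\{0\}$ in Corollary~\ref{paraweak}: for $\mu$-almost every $L=h_{f(x)}$ and every $\eps>0$ there is $t_{x,\eps}$ beyond which the only $\bx\in\Z^{d+1}$ with $\|a_tL\bx\|\le e^{-\eps t}$ is $\bx=0$.

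It then remains to check that VWMA of $f(x)$ violates this for almost every such $x$. Fix $N\in\N$ and suppose $f(x)$ is VWMA with exponent $\ge1/N$; then there are infinitely many $\bq\in\Z^d\setminus\{0\}$, with $Q:=\prod_i\max(1,|q_i|)\to\infty$, and integers $p$ with $|p+\bq\cdot f(x)|\le Q^{-1-1/N}$. To each such $\bq$ one attaches the time $t_\bq\asymp_{N,d}\log Q$ and the semigroup with $A_i=-\bigl(\log\max(1,|q_i|)/t_\bq+\eps'\bigr)$ for $i\ge1$ and $A_0=-(A_1+\dots+A_d)$, where $\eps'=\eps'(N,d)>0$; a direct estimate shows this semigroup is of the above type, its parameters stay in a fixed bounded set, and $\|a_{t_\bq}h_{f(x)}(p,\bq)\|\le e^{-\eps''t_\bq}$ for some $\eps''=\eps''(N,d)>0$. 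Crucially this works verbatim when some $q_i=0$ (the formula then just takes $A_i=-\eps'$), so no induction on $d$ is needed. Replacing $a$ by a semigroup $a^\sharp$ chosen from a fixed finite net of that bounded parameter set alters the norm only by a controlled exponential factor, so after pigeonholing a single $a^\sharp$ serves infinitely many $\bq$. Applying the second paragraph to $a^\sharp$ then contradicts $V(a^\sharp)=\{0\}$ for $\mu$-almost every $x$; hence $\{x\in U:f(x)\text{ is VWMA of exponent}\ge1/N\}$ is Lebesgue-null, and letting $N\to\infty$ yields the theorem.

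The main obstacle is the semistability claim of the second paragraph, and it is precisely here that the hypothesis on $f$ is used: were $f(U)$ contained in an affine hyperplane one could have $\tau_{\widetilde M}(V)<0$ for some rational $V$, the filtration would be non-trivial, and $V(a)$ would become a genuine source of VWMA points --- as it must, for instance, when $M$ is a rational point. The remaining steps --- the Dani-type estimate of the third paragraph and the discretization of the flow --- are routine bookkeeping.
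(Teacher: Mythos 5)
Your proof is correct, but it takes a genuinely different route from the paper's. The paper deduces Theorem~\ref{km-thm} from the non-parametric Theorem~\ref{subspace-main} and then runs an induction on $d$: the subspace theorem confines large solutions to finitely many rational hyperplanes, and one re-applies the induction hypothesis to the restricted system of forms on each hyperplane (this mirrors the classical Schmidt-style deduction, and the paper's induction hypothesis is phrased for a general system $g=(g_0,\dots,g_d)$ and integer vectors $b_1,\dots,b_d$ precisely so that the restriction step closes). You instead go through the parametric Corollary~\ref{paraweak} and prove outright that $\R^{d+1}$ is $\widetilde M$-semistable for every unimodular flow with $A_0\ge 0\ge A_1,\dots,A_d$ — your computation of $\tau_H$ on exterior powers of the unipotent group $H$ is correct, and the identification $\cH_{\widetilde M}=\cH_H$ from the affine-span hypothesis is exactly where linear independence of $1,f_1,\dots,f_d$ enters (the paper uses the same observation to see that the Pl\"ucker closure is defined over $\Q$). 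Semistability forces $V(a)=\{0\}$, so no induction and no disposal of solutions inside exceptional hyperplanes is needed; your reduction of VWMA to the family of flows with $A_i<0$ for $i\ge 1$, including the case $q_i=0$, and the discretization/pigeonholing over a finite net are the same bookkeeping the paper performs in \S\ref{proof-sub}. Your route is in effect the ``extremality versus semistability'' dictionary that the paper itself records later (Proposition~\ref{extvssemi} and the remark closing \S\ref{multi-strong}), applied directly; it buys a cleaner structure and makes visible that there are no exceptional subspaces at all, at the cost of the explicit exterior-power computation. One trivial slip: you write that $h_y$ ``fixes $e_1,\dots,e_d$'' while also setting $h_ye_i=e_i+y_ie_0$; you mean that $h_y$ fixes $e_0$.
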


As often with applications of the subspace theorem, proofs proceed by induction on dimension. The induction hypothesis will be as follows: if $g=(g_0,\ldots,g_d)$ is a tuple of linearly independent analytic functions on $U$ and $b_1,\ldots, b_d \in \Z^d$ are linearly independent, then for almost every $x \in U$ and every $\eps>0$, there are only finitely many solutions $\bv \in \Z^{d+1}$ to the inequalities 
\begin{equation}\label{km-sub}0 < \prod_0^d |L_i(\bv)| \leq \frac{1}{\|\bv\|^\eps},\end{equation} where $\|\bv\|=\max|v_i|$,  $L_0(\bv)=g(x) \cdot \bv$ and $L_i(\bv)=b_i \cdot \bv$ for $i \ge 1$. 
It is straightforward that this statement implies Theorem \ref{km-thm}, by letting $\bv=(p,\bq)$, $g=(1,f)$ and $b_1,\ldots,b_d$ the standard basis of $\Z^d$. 

Let $\phi(x)$ be the matrix whose rows are $L_0/g_0,L_1,\ldots,L_d$. The linear independence assumption implies that $\phi(x) \in \GL_{d+1}(\R)$ on an open subset $U' \subset U$, and without loss of generality we may assume that $U'=U$. The equations defining the Pl\"ucker closure of $\phi(U)$ are linear combinations of $k\times k$ minors of $\phi(x)$. Since those are linear combinations of the $g_i/g_0$, the Pl\"ucker closure of $\phi(U)$ is the Pl\"ucker closure of the set of all matrices in $\GL_{d+1}(\R)$ whose first row is $(1,y)$ with $y\in \R^d$ arbitrary and whose other rows are $L_1,\ldots,L_d$. So it is defined over $\Q$.


We may thus apply Theorem \ref{subspace-main} and conclude that  there is a finite number of proper rational hyperplanes $V$, such that for almost every $x$, the large enough solutions  $\bv$ to $(\ref{km-sub})$  are contained in some $V$.

Now pick one such $V$, and consider the restriction of $L_0$ to $V$. For $\bv \in V$ we can write $\bv=\sum_1^d k_iv_i$ for $\bk=(k_1,\ldots,k_d) \in \Z^d$, where $v_1,\ldots,v_d$ is a basis of $V \cap \Z^{d+1}$. Set $L'_0(\bk):=L_{0|V}(\bv)=\bv \cdot g = \bk \cdot h$, where $h=(h_1,\ldots,h_d)$ and $h_i=v_i \cdot g$. And for $i \ge 1$, $L'_i(\bk):=L_{i|V}(\bv)=\bv \cdot b_i = \bk \cdot c_i$, where $c_i=(b_i \cdot v_1,\ldots,b_i \cdot v_d)$. The $L_i$ are linearly independent, so the $(L'_i)_0^d$ have rank at least $d$ on $\R^d$, and thus $(c_1,\ldots,c_d)$ has rank at least $d-1$. Up to reordering, we may assume that $c_1,\ldots,c_{d-1}$ are linearly independent. Similarly we see that $h_1,\ldots,h_d$ are linearly independent.

Finally observe that a solution $\bv \in V$ to  $(\ref{km-sub})$ yields a $\bk \in \Z^d$ such that
$$ 0< \prod_0^d |L'_i(\bk)| \ll \frac{1}{\|\bk\|^\eps}.$$
But $L'_d(\bk)=\bv \cdot b_d \in \Z \setminus\{0\}$. Thus $0<\prod_0^{d-1} |L'_i(\bk)| \ll \frac{1}{\|\bk\|^\eps}$ and  thus, by induction hypothesis, $\bk$ belongs to a finite set of points. Hence so does $\bv$. This ends the proof.

\subsection{Ridout's theorem for manifolds}
Ridout's theorem \cite{bombierigubler,zannier-lectures} is an extension of Roth's theorem where $p$-adic places are allowed. This improves the exponent in Roth's theorem from $2$ to $1$ in case the rational approximations have denominators with prime factorization in a fixed subset.  In this paragraph,  we present one possible similar variant of the Kleinbock-Margulis theorem (Theorem \ref{km-thm}). This will rely on the $S$-arithmetic subspace theorem for manifolds (Theorem \ref{subspacenf}). 

\begin{theorem}\label{ridout-manifold} Let $S$ be a finite set of primes. Let $U \subset \R^n$ be a connected  open subset and $f_1,\ldots,f_d:U \to \R$ be real analytic functions, which together with $1$ are linearly independent over $\R$. Write $f=(f_1,\ldots,f_d)$. Then for every $\eps>0$ and for Lebesgue almost every $u \in U$, we have \begin{equation}\label{rid}\max_{1\leq i \leq d} |qf_i(u) - p_i| \ge q^{-\eps}\end{equation} for all $(p_1,\ldots,p_d,q) \in \Z^{d+1}$ with all prime factors of $q$ in $S$, except for finitely many exceptions. 
\end{theorem}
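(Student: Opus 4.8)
The plan is to deduce Theorem~\ref{ridout-manifold} from the $S$-arithmetic subspace theorem for manifolds (Theorem~\ref{subspacenf}) by the same induction-on-dimension scheme used for the Sprindzuk conjecture in \S\ref{sprin}. Let $S'=\{\infty\}\cup S$, regarded as a set of places of $\Q$, so that $\mathcal{O}_{\Q,S'}=\Z[1/p : p\in S]$ and an $S'$-unit is, up to sign, a product of powers of primes in $S$. The key observation is that the condition ``all prime factors of $q$ lie in $S$'' together with a normalization means the vector $(p_1,\dots,p_d,q)$ may, after clearing common $S$-unit factors, be taken to be a primitive element of $\mathcal{O}_{\Q,S'}^{d+1}$ whose content $c(\bx)=\prod_{v\in S'}\|\bx^{(v)}\|_v$ is, up to bounded multiplicative error, equal to $\max\{|q|,|p_1|,\dots,|p_d|\}$; and the left-hand side of \eqref{rid} raised to a power controls a product $\prod_{v\in S'}\prod_i |L_i^{(v)}(\bx)|_v$ of the right shape. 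So the plan is: set up the correct family of linear forms, check the Pl\"ucker closure is defined over $\Q$, invoke Theorem~\ref{subspacenf}, and then run the inductive descent exactly as in \S\ref{sprin}.

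More precisely, I would first reduce, as in \S\ref{sprin}, to the following inductive statement: given linearly independent real-analytic functions $g_0,\dots,g_d$ on $U$ and linearly independent integer vectors $b_1,\dots,b_d\in\Z^{d+1}$, for Lebesgue-almost every $u\in U$ and every $\eps>0$ the inequality
\begin{equation}\label{rid-sub}
0<\prod_{v\in S'}\prod_{i=0}^d |L_i^{(v)}(\bx)|_v \leq c(\bx)^{-\eps}
\end{equation}
has only finitely many solution classes $\bx\in\mathcal{O}_{\Q,S'}^{d+1}$, where for the archimedean place $L_0^{(\infty)}(\bx)=g(u)\cdot\bx$ and $L_i^{(\infty)}(\bx)=b_i\cdot\bx$ for $i\geq 1$, and at each finite place $v\in S$ we simply take $L_i^{(v)}$ to be the standard coordinate forms (or, after relabelling, $L_0^{(v)}(\bx)=x_{d+1}=q$ and $L_i^{(v)}(\bx)=x_i$), so that $\prod_{v\in S}\prod_i|L_i^{(v)}(\bx)|_v$ is comparable to $c_S(q)^{-1}\cdot(\text{stuff})$ and encodes the restriction on the prime factors of $q$. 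The original statement \eqref{rid} follows by specializing $g=(1,f)$, $b_i$ the last $d$ standard basis vectors, and tracking that $\max_i|qf_i(u)-p_i|$ small forces $\prod_{v}\prod_i|L_i^{(v)}|_v$ to be bounded by a small power of $c(\bx)$, using $\|q\|=\prod_{v\in S'}|q|_v^{-1}$ for the $S$-unit normalization.

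Next I would verify the hypothesis of Theorem~\ref{subspacenf}. Form the matrix $\phi(u)\in\GL_{d+1}(\R)$ whose rows are $L_0/g_0, L_1,\dots,L_d$ at the archimedean place, and the constant (identity-like) matrices at the finite places; the resulting product measure $\mu=\otimes_{v\in S'}\mu_v$ is good by Proposition~\ref{analyticU} (at $\infty$) and is a Dirac mass at each finite place. As in \S\ref{sprin}, the Pl\"ucker closure at the archimedean place is cut out by linear combinations of $k\times k$ minors of $\phi(u)$, which are $\R$-linear combinations of the ratios $g_i/g_0$, hence the Pl\"ucker closure coincides with that of the affine family of matrices with first row $(1,y)$, $y\in\R^d$ arbitrary, and other rows $L_1,\dots,L_d$: this is defined over $\Q$. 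At the finite places the matrices are rational, so their Pl\"ucker closures are trivially defined over $\Q$. Hence Theorem~\ref{subspacenf} applies and produces finitely many proper rational hyperplanes $V_1,\dots,V_r\subset\Q^{d+1}$ such that for Lebesgue-almost every $u$, all but finitely many solution classes of \eqref{rid-sub} lie in some $V_j$.

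Finally I would carry out the descent. Fix one exceptional hyperplane $V=V_j$, pick a basis $v_1,\dots,v_d$ of $V\cap\Z^{d+1}$, and restrict all the forms $L_i^{(v)}$ to $V$, writing $\bx=\sum k_\ell v_\ell$ with $\bk\in\mathcal{O}_{\Q,S'}^d$. Exactly as in \S\ref{sprin}, linear independence of the original forms forces the restricted archimedean forms $L_0'=\bk\cdot h$ (with $h_\ell=v_\ell\cdot g$) and $L_i'=\bk\cdot c_i$ to have rank $\geq d$, so after reordering $h_1,\dots,h_d$ are linearly independent and $c_1,\dots,c_{d-1}$ are linearly independent; and one of the restricted integer forms, say $L_d'$, takes values in $\mathcal{O}_{\Q,S'}\setminus\{0\}$, whose $S'$-content is $\geq 1$ by the product formula. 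Dividing it out of \eqref{rid-sub} restricted to $V$ yields a $(d-1)$-dimensional instance of the same inequality in $\bk$ with the forms $g$ replaced by $h$ and the $b_i$ replaced by the $c_i$ (together with the finite-place coordinate forms), to which the induction hypothesis applies, showing $\bk$ — hence $\bx$ — ranges over a finite set. This closes the induction; the base case $d=0$ is trivial. The main obstacle I anticipate is bookkeeping rather than conceptual: one must carefully match the $S$-unit normalization of solutions with the content $c(\bx)$ and check that the exponent degradation at each step is harmless (replacing $\eps$ by a fixed fraction of itself, absorbing the $|S'|$-dependent constants from \eqref{diri} and from the comparison of $\|\bx\|$ with $\max|x_i|$), and that the finite-place forms can be kept ``inert'' through the restriction to $V$ without destroying linear independence at $\infty$ — exactly the points that are routine but easy to get wrong.
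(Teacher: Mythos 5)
Your overall architecture is exactly the paper's: encode the problem as an inequality over the places $\{\infty\}\cup S$, use the product formula $\prod_{p\in S}|q|_p=|q|^{-1}$ (valid because all prime factors of $q$ lie in $S$) to absorb the large archimedean factor, invoke Theorem~\ref{subspacenf} after checking the Pl\"ucker closure is rational, and then descend by induction on the dimension as in \S\ref{sprin}. Your treatment of the finite places (Dirac masses, trivially good, defined over $\Q$), of the $S$-unit normalization via \eqref{diri}, and of the descent step (discarding the restricted integer form because its content is $\geq 1$ by the product formula) are all sound.

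However, there is a concrete gap at the very first step: your inductive statement carries a \emph{single} analytic form $L_0^{(\infty)}(\bx)=g(u)\cdot\bx$ at the archimedean place, and you claim that \eqref{rid} follows by specializing $g=(1,f)$. It does not. The inequality \eqref{rid} is a \emph{simultaneous} approximation statement: failure of \eqref{rid} means the $d$ distinct analytic forms $qf_i(u)-p_i$, $i=1,\ldots,d$, are all small, which is a codimension-$d$ condition and cannot be encoded by a single linear form $g(u)\cdot\bx$. What your inductive statement does specialize to is the dual (``linear forms'') variant $|p+\bq\cdot f(u)|$, which is a different theorem (the one alluded to in the remark following the statement). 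The correct setup, which is what the paper uses, takes at the archimedean place the $d+1$ forms $x_1$ and $x_1f_i(u)-x_{i+1}$ for $i=1,\ldots,d$ (so $d$ analytic rows and one coordinate row), and the coordinate forms $x_i$ at each $p\in S$; then for $\bx=(q,p_1,\ldots,p_d)$ violating \eqref{rid} one gets
\[
\prod_{v\in\{\infty\}\cup S}\prod_{i}|L_i^{(v)}(\bx)|_v\ \leq\ |q|\cdot q^{-d\eps}\cdot\prod_{p\in S}|q|_p\ \leq\ q^{-d\eps}\ \ll\ H(\bx)^{-d\eps/2},
\]
and Theorem~\ref{subspacenf} applies. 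Accordingly, your inductive statement must be reformulated to allow several analytic rows (together with integer rows) at infinity, with the rank bookkeeping of \S\ref{sprin} adjusted to that situation; once that is done, the rest of your argument goes through.
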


This is in contrast with the same result \cite{km} without the restriction on the denominator $q$, where the right-hand side of $(\ref{rid})$ needs to be replaced by the weaker bound $q^{-\frac{1}{d}-\eps}$.  We have chosen simultaneous approximation for a change, but a similar statement with similar proof holds also for linear forms. Besides the theorem holds under the weaker assumption that the subspace of $\R^{d+1}$ spanned by all vectors $(1,f_1(x),\ldots,f_d(x))$, $x \in U$, is defined over $\QQ$.

\begin{proof}[Proof sketch]The proof is a straightforward modification of the one we gave of Theorem \ref{km-thm} in \S \ref{sprin}. We define linear forms on $\R^{d+1}$,   $L^{(\infty)}_{u,i}$ is $x_1$ if $i=1$ and $x_1f_i(u) - x_i$ if $i>1$. And if $p\in S$,  $L^{(p)}_{u,i}$ is constant equal to $x_i$. And we note that if $\bx:=(q,p_1,\ldots,p_d) \in \Z^{d+1}$ contradicts $(\ref{rid})$, then $$\prod_{1 \leq i \leq d+1}  \prod_{p \in \{\infty\} \cup S} |L_{u,i}^{(p)}(\bx)|_p \leq H(\bx)^{-d\eps}.$$ So Theorem \ref{subspacenf} applies and if $q$ is large enough, $\bx$ belongs to a finite family of proper rational subspaces. This allows us to use induction after restricting the linear forms to one of these subspaces, as in the proof of  Theorem \ref{km-thm}. The proof is left to the reader.
\end{proof}

\subsection{Submanifolds of matrices: extremality and inheritance}  \label{subsec:ede}
In this paragraph, we describe the extension of the Kleinbock-Margulis theorem to the case of subsmanifolds of matrices and we show how to recover from the subspace theorem for manifolds one of the main result of \cite{abrs2}, which is a criterion for extremality in terms of so-called constraining pencils and an explicit computation of the exponent.

In what follows, $E$ and $V$ are two finite-dimensional real vector spaces with a $\Q$-structure. We fix a lattice $\Delta$ in $V$, which defines the rational structure. For $x$ in $\Hom(V,E)$, we define
\[
\beta(x) = \inf\{\beta>0 \ |\ \exists c>0:\ \forall v\in \Delta\setminus\{0\},\
\|x(v)\|\geq c\|v\|^{-\beta}\}.
\]
Note that $\beta(x)$ only depends on the subspace $\ker x$ in $V$.


Given a subspace $W$ in $V$ and an integer $r$, we define the \emph{pencil of endomorphisms} $\cP_{W,r}$ by
\[ \cP_{W,r} = \{ x\in\Hom(V,E) \ |\ \dim x(W) \leq r\}.\]
We say that $\cP_{W,r}$ is constraining if $ \dim W/r < \dim V / \dim E$ and rational if $W$ is so. 
In the case of algebraic sets defined over $\Q$, the following theorem was proved in joint work with Menny Aka and Lior Rosenzweig \cite[Theorem~1.2]{abrs2}.
The approach taken here yields a different proof of that result, and allows to generalize it to subsets defined over $\QQ$.

\begin{theorem}[Diophantine exponent for submanifolds of matrices]
\label{extremalityexponent} Let $U \subset \R^n$ a connected open set and $\phi:U \to \Hom(V,E)$ an analytic map.
Assume that the Zariski closure of  $M=\phi(U)$ is defined over $\overline{\Q}$.
Then, for Lebesgue almost every $u$ in $U$, setting $x=\phi(u)$,
\[ \beta(x) = \max \left\{ \frac{\dim W}{r}-1 \ ;\ W\ \mbox{rational subspace such that}\  M\subset\cP_{W,r}\right\}.\]
\end{theorem}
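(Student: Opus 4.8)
The statement is an equality, so the plan is to prove the two inequalities separately: first the lower bound $\beta(x)\ge\beta_0$ for Lebesgue-a.e.\ $x$, where $\beta_0$ denotes the right-hand side, and then the matching upper bound $\beta(x)\le\beta_0$. Set $m=\dim V$, $\ell=\dim E$, and fix compatible $\Q$-bases so that $\Delta=\Z^m$ and each $x\in\Hom(V,E)$ is an $\ell\times m$ matrix. For a rational subspace $W\le V$ put $\rho_M(W):=\max_{x\in M}\dim x(W)$; then $M\subset\cP_{W,r}$ exactly when $\rho_M(W)\le r$, so, excluding the degenerate case where $\bigcap_{x\in M}\ker x$ contains a nonzero rational subspace (then $\beta(x)=+\infty$ for a.e.\ $x$ and the right-hand side is $+\infty$ under the convention $\dim W/0=+\infty$), one has $\beta_0=\max\{\dim W/\rho_M(W):W\le V\text{ rational},\,W\neq0\}-1$. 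As in the proof of Lemma~\ref{tausm}, using that the rank function $W\mapsto\dim x(W)$ is submodular for each $x$ and that $\{x\in M:\dim x(W)<\rho_M(W)\}$ is a proper (determinantal) subvariety of the irreducible variety $Zar(M)$, the function $W\mapsto\rho_M(W)$ is submodular on $\Grass(\Q^m)$; so the slope formalism of \S\ref{h-n-sec} applies to it, and one even has $\beta_0=1/s_1-1$, where $s_1$ is the first slope of the associated Grayson polygon.

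For the lower bound I would argue by a counting (Dirichlet/pigeonhole) argument. Fix a rational $W^*$ with $\rho_M(W^*)=:r^*$ realizing the maximum. The bad sets $\{x:\dim x(W^*)<r^*\}$ and, in the non-degenerate case, $\{x:\Delta\cap\ker x\neq0\}$ (a countable union of $\{x:\ell\subseteq\ker x\}$ over rational lines $\ell$) are proper subvarieties of $Zar(M)$ defined over $\QQ$, hence meet the analytic manifold $M$ in Lebesgue-null sets; so for a.e.\ $x$ the map $x|_{W^*}$ is injective on $\Delta\cap W^*$ with $r^*$-dimensional image. Among the $\gg T^{\dim W^*}$ lattice points $v\in\Delta\cap W^*$ with $\|v\|\le T$, the images $x(v)$ are then distinct and confined to a ball of radius $\ll_x T$ in the $r^*$-plane $x(W^*)$, so two of them are $\ll_x T^{1-\dim W^*/r^*}=T^{-\beta_0}$ apart; their difference is a $u\in\Delta\setminus\{0\}$ with $0<\|x(u)\|\ll_x\|u\|^{-\beta_0}$ and $\|u\|\ll T$. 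Letting $T\to\infty$ yields infinitely many such $u$ (they cannot all be bounded, nor can $\|x(u)\|$ stay bounded below), and hence $\beta(x)\ge\beta_0$.

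For the upper bound I would mimic the derivation of the Sprindzuk conjecture in \S\ref{sprin}, proceeding by induction on $\dim V$ and feeding in Corollary~\ref{paraweak}. Encode $x$ by $g_x=\bigl(\begin{smallmatrix}I_m&0\\x&I_\ell\end{smallmatrix}\bigr)\in\GL_{m+\ell}(\R)$; the Zariski (indeed Pl\"ucker) closure of $\{g_x:x\in M\}$ is defined over $\QQ$ whenever that of $M$ is. Given $\eps>0$ and a solution $v\in\Delta$ of $\|x(v)\|\le\|v\|^{-\beta_0-\eps}$ with $\|v\|$ large, we have $|v_j|\le\|v\|$ and $|(xv)_i|\le\|v\|^{-\beta_0-\eps}$; truncating the excessively small quantities and re-centering exactly as in \S\ref{proof-sub}, one extracts from a finite net $P$ of unimodular diagonal semigroups of $\GL_{m+\ell}(\R)$ (depending only on $m,\ell,\beta_0,\eps$) some $a\in P$ and a time $t\asymp\eps\log\|v\|$ with $\|a_t g_x(v,0)\|\le e^{-t}$; since $(v,0)\in\Z^{m+\ell}$, Corollary~\ref{paraweak} then forces $(v,0)\in V(a)$, so $v$ lies in the rational subspace $W(a):=\{w\in\Q^m:(w,0)\in V(a)\}\le V$. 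One checks, using Minkowski's theorem and \eqref{taulimit2bis} to rewrite $\tau_{\{g_x\}}$ on the subspaces $W\oplus0$ in terms of $\dim x(W)=\rho_M(W)$ for generic $x$, together with the semistability condition defining $V(a)=V_{h-1}$, that for $\eps$ small and the net chosen finely enough $W(a)$ is always a \emph{proper} rational subspace of $V$. On each $W(a)$ the problem restricts to a strictly smaller instance whose exponent, by the formula applied in dimension $\dim W(a)$, is $\le\max\{\dim W/\rho_M(W):W\le W(a)\text{ rational}\}-1\le\beta_0$ (a max over a smaller family); so the induction hypothesis shows that only finitely many solutions lie in any $W(a)$, and since $P$ is finite, all but finitely many solutions are excluded, giving $\beta(x)\le\beta_0$.

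The hard part will be the upper bound, and specifically the verification that the finite family of flows can be arranged so that every subspace $V(a)$ it produces genuinely cuts $V$ down, so that the induction closes on the exact value $\beta_0$ and not on some weaker bound: this is really a statement about the slope formalism, requiring one to match the Harder--Narasimhan optimisation for $\tau_{\{g_x\}}$ on $\Grass(\Q^{m+\ell})$ against the extremal ratio $\max_W\dim W/\rho_M(W)$ of the submodular function $\rho_M$ on $\Grass(\Q^m)$. A cleaner variant, which I would try in parallel, is to avoid the net entirely: use Lemma~\ref{countable} to pick a rational point $x_0\in Zar(M)(\QQ)$ with $\dim x_0(W)=\rho_M(W)$ for all rational $W$, run the classical subspace theorem together with the induction of \S\ref{sprin} on the single point $x_0$, and then transfer the resulting a.e.\ statement from $\{x_0\}$ to $\mu$-a.e.\ $x\in M$ via the inheritance principle, Theorem~\ref{rate}; the combinatorial identification of slopes with the pencil invariants is the same essential point either way.
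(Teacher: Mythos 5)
Your lower bound is fine and is essentially the ``classical Dirichlet argument'' the paper invokes without detail. The problem is in your upper bound, and it is not only the difficulty you flag at the end: the reduction to Corollary~\ref{paraweak} via the block matrix $g_x=\bigl(\begin{smallmatrix}I_m&0\\x&I_\ell\end{smallmatrix}\bigr)$ breaks down one step earlier. To run the net construction of \S\ref{proof-sub} you need the product of \emph{all} $m+\ell$ coordinates of $g_x(v,0)=(v,x(v))$ to decay like a negative power of $\|v\|$: a unimodular diagonal $a$ with $\|a_tg_x(v,0)\|\le e^{-t}$ forces $\sum_j\log|v_j|+\sum_i\log|(xv)_i|\le -(m+\ell)t<0$. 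But $\sum_j\log|v_j|$ can be as large as $m\log\|v\|$ while $\sum_i\log|(xv)_i|\le -\ell(\beta_0+\eps)\log\|v\|$, so the sum is negative only when $\beta_0\gtrsim \dim V/\dim E$ --- and the formula only guarantees $\beta_0\ge \dim V/\rho-1$ with $\rho$ the generic rank, which is strictly smaller. Concretely, for $\dim V=3$, $\dim E=\rho=2$ and $M$ in no constraining pencil one has $\beta_0=1/2$, and a solution $v$ with all three coordinates comparable and $\|x(v)\|\asymp\|v\|^{-1/2-\eps}$ gives product $\asymp\|v\|^{2-2\eps}$: no contracting unimodular flow exists, so your finite set $P$ cannot be built. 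On top of this sits the issue you do identify: even with a flow in hand, the solutions live in $\Q^m\times 0$ from the outset, and nothing prevents $V(a)\supseteq\Q^m\times 0$, in which case the conclusion of Corollary~\ref{paraweak} is vacuous and the induction does not close.

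The paper's proof sidesteps both problems by never leaving dimension $d=\dim V$. One first projects $E$ so that $\dim E$ equals the generic rank $m$ of $x$ on $M$; the system of forms is then the $m$ rows of $x$ together with only $d-m$ coordinate forms $v_{m+1},\dots,v_d$, giving a generically invertible $d\times d$ matrix acting on the full lattice $\Z^d$. The product of these $d$ forms is at most $\|v\|^{(d-m)-m(\beta+\eps)}\le\|v\|^{-m\eps}$ precisely because $\beta\ge d/m-1$, so Theorem~\ref{subspace-main} (the manifold version, applied directly rather than via a single $\QQ$-point plus Theorem~\ref{rate}) yields finitely many proper rational subspaces of $\Q^d$ --- genuinely proper, since the solutions were not confined to a subspace --- and the induction proceeds by restricting $M$ and $\Delta$ to each of them. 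Your closing remark about matching the Harder--Narasimhan optimisation for $\tau_{\{g_x\}}$ against $\max_W\dim W/\rho_M(W)$ is the right instinct, but the correct resolution is to change the encoding, not to refine the net.
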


\begin{remark}For any sublattice $\Delta'\leq\Delta$, one may define
\[ r(\Delta')=\max\left\{\dim\Span_{\R}x(\Delta');\ x\in M\right\}.\]
Then, the formula in the above theorem is simply, for almost every $x$ in $M$,
\[ \beta(x) =\beta:= \max \left\{ \frac{\rk\Delta'}{r(\Delta')}-1 \ ;\ \Delta'\leq\Delta\right\}.\]
\end{remark}

%

\begin{proof}[Proof of Theorem~\ref{extremalityexponent}]
%
We prove the theorem by induction on $d=\dim V$, using the subspace theorem.\\
\underline{$d=1$.} The result is clear because, for every $x$, the subgroup $x(\Delta)$ is a discrete subgroup of $E$.\\
\underline{$d-1\to d$.}
Suppose the result has been proven for $d-1\geq 1$.
Let $m=\dim E$ and $d=\dim V$.
Fixing bases for $E$ and $\Delta$, we identify $\Hom(V,E)$ with $m\times d$ matrices. 
Given $x$ in $M$, we denote by $x_i$, $1\leq i\leq d$, its columns, which are vectors in $E$.
The rank of $x$ is almost everywhere constant, and taking a coordinate projection if necessary, we assume that this rank is $m \leq d$ and that for almost every $x$ in $M$, 
the vector space $E$ is spanned by the first $m$ columns $x_i$, $1\leq i\leq m$.
We want to show that, for almost every $x$ in $M$, for all $\eps>0$,
the set of inequalities
\begin{equation}\label{limmooo}
|v_1x_{i1}+v_2x_{i2}+\dots+v_dx_{id}|\leq \|v\|^{-\beta-\epsilon}, \quad 1\leq i\leq m,
\end{equation}
has only finitely many solutions $\bv=(v_1,\dots,v_d)$ in $\mathbb{Z}^d$.
For $1\leq i\leq m$, define a linear form on $V\simeq\mathbb{R}^d$ by
$$L_i(\bv)=v_1x_{i1}+v_2x_{i2}+\dots+v_dx_{id},$$
and for $m< i\leq d$,
$$L_i(\bv)=v_i.$$
Since $(x_i)_{1\leq i\leq m}$ spans $E$, the family of linear forms $(L_i)_{1\leq i\leq d}$ is linearly independent.
Moreover, as $\beta \geq \frac{\rk\Delta}{r(\Delta)}-1=\frac{d-m}{m}$, condition~(\ref{limmooo}) certainly implies
$$\prod_{i=1}^d|L_i(\bv)|\leq \|v\|^{-\epsilon}.$$
We may therefore apply Theorem \ref{subspace-main}: there exists a finite family $V_1,\dots,V_h$ of hyperplanes in $\mathbb{Q}^k$ such that, for almost every $x$ in $M$, the integer solutions to (\ref{limmooo}) all lie in the union $V_1\cup\dots\cup V_h$ except a finite number of them.
It now suffices to check that in each $V_i$, there can be only finitely many solutions.
This follows from the induction hypothesis applied to  $V'=V_i$, $\Delta'=V_i\cap\Delta$ and to the manifold $M'$ image of $M$ under restriction to $V'$.
%

The converse inequality $\beta(x)\geq \beta$ is true for all $x$ in $M$, as is easily seen using the classical Dirichlet argument.
\end{proof}

It is also worth observing the following relation between the notions of extremality and semistability.  In \cite{km, kleinbock-margulis-wang} an analytic submanifold $M$ of $M_{m,n}(\R)$ is said to be extremal if for almost every $Y\in M$ and every $\eps>0$ there are only finitely many vectors $q \in \Z^n$ and $p \in \Z^m$ such that $\|Yq-p\| \le \|q\|^{-(n/m+\eps)}$. Now consider the matrix 
$$L_Y:=\begin{pmatrix}
I_m & Y\\
 & I_n
\end{pmatrix}$$
The image $\widetilde{M}$ of $M$ under $Y \mapsto L_Y$ defines a submanifold of $\GL_{n+m}(\R)$. Using Proposition \ref{pol-comp} and Theorem \ref{parametrici}, it is then easy to see that:

\begin{proposition}[extremality vs. semistability]\label{extvssemi} If $\R^{n+m}$ is $\widetilde{M}$-semistable with respect to the unimodular flow: $$a_t=(e^{tn},\ldots, e^{tn}, e^{-tm},\ldots,e^{-tm}),$$ then $M$ is extremal. If the Zariski (or Pl\"ucker) closure of $M$ is defined over $\QQ$, then the almost sure diophantine exponent $\beta$ from Theorem \ref{extremalityexponent} can be read off the rational Grayson polygon of $\widetilde{M}$ by the formula $$1+\beta = \frac{n+m}{\gamma + m},$$ where $\gamma$ is the smallest slope of the rational Grayson polygon. In particular $\Q^{n+m}$ is $\widetilde{M}$-semistable if and only if $M$ is extremal. 
\end{proposition}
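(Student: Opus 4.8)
The plan is to translate the approximation condition for $Y$ into the divergence rate of the first successive minimum $\lambda_1$ along the orbit $(a_tL_Y\Z^{n+m})_{t\ge 0}$ — the Dani correspondence — and then feed this into the structural results already established: Proposition~\ref{pol-comp} for the first assertion, and Theorem~\ref{parametrici} together with Theorem~\ref{extremalityexponent} for the formula and the $\QQ$-case. Write $a=(a_t)$ for the unimodular flow of the statement and $\omega(Y)=\sup\{w\ge 0:\ \|Yq-p\|\le\|q\|^{-w}\text{ for infinitely many }(q,p)\in(\Z^n\setminus\{0\})\times\Z^m\}$ for the classical simultaneous Diophantine exponent; by Dirichlet $\omega(Y)\ge n/m$, and $Y$ is not VWMA precisely when $\omega(Y)=n/m$, so $M$ is extremal iff $\omega(Y)=n/m$ for a.e.\ $Y\in M$. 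The first step is the elementary identity, valid uniformly for $Y$ in a bounded subset of $M_{m,n}(\R)$,
\[
\gamma_Y:=\liminf_{t\to+\infty}\tfrac1t\log\lambda_1(a_tL_Y\Z^{n+m})=\frac{n-m\,\omega(Y)}{1+\omega(Y)},
\qquad\text{i.e.}\qquad
\omega(Y)=\frac{n-\gamma_Y}{m+\gamma_Y},
\]
obtained by the usual balancing argument applied to $a_tL_Y\binom{-p}{q}=\binom{e^{tn}(Yq-p)}{e^{-tm}q}$, using that $\lambda_1(L_Y\Z^{n+m})$ is bounded below uniformly in $Y$ (to discard the vector $q=0$) and that $a$ is unimodular, the latter also giving $\gamma_Y\le 0$ for every $Y$. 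Thus $M$ is extremal if and only if $\gamma_Y=0$ for $\mu$-a.e.\ $Y$, $\mu$ being the image on $\widetilde M$ of the Lebesgue measure parametrising $M$.

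For the first assertion, suppose $\R^{n+m}$ is $\widetilde M$-semistable with respect to $a$ in the real Harder-Narasimhan sense of \S\ref{R-sec}. Since $a$ is unimodular, $\tau_{\widetilde M}(\R^{n+m})=\trace A=0$, so semistability forces $\tau_{\widetilde M}(W)\ge 0$ for every subspace $W\le\R^{n+m}$, i.e.\ the real Grayson polygon $\mathcal{G}^{\R}$ of $\widetilde M$ is identically $0$. By the sandwich theorem (Proposition~\ref{pol-comp}), $\mathcal{G}_\mu\ge\mathcal{G}^{\R}=0$, so the $\mu$-almost sure value of $\gamma_Y$ is $\ge 0$; combined with $\gamma_Y\le 0$ for all $Y$ this forces $\gamma_Y=0$ for $\mu$-a.e.\ $Y$, whence $M$ is extremal by the previous paragraph.

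Now assume the Zariski (equivalently Pl\"ucker) closure of $M$ — hence of $\widetilde M$, since $Y\mapsto L_Y$ is affine over $\Q$ — is defined over $\QQ$. Then Theorem~\ref{parametrici} applies to $\widetilde M$ and $a$: for $\mu$-a.e.\ $Y$ the limit of $\tfrac1t\log\lambda_1(a_tL_Y\Z^{n+m})$ exists and equals $\Lambda_1=\gamma$, the smallest slope of the rational Grayson polygon of $\widetilde M$. So $\gamma_Y=\gamma$ for a.e.\ $Y$, and the Dani correspondence gives $\omega(Y)=\tfrac{n-\gamma}{m+\gamma}$ for a.e.\ $Y$. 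It remains to identify $\omega(Y)$ with the almost sure exponent $\beta$ of Theorem~\ref{extremalityexponent}: applying that theorem with $V=\R^{n+m}$, $E=\R^m$, $\Delta=\Z^{n+m}$ and the analytic map $\phi\colon Y\mapsto\big(\binom qp\mapsto Yq-p\big)\in\Hom(\R^{n+m},\R^m)$ (whose Zariski closure is defined over $\QQ$ since that of $M$ is), one checks $\beta(\phi(Y))=\omega(Y)$, the vectors with $q=0$ or with $p$ not the integer point closest to $Yq$ contributing nothing because then $\|Yq-p\|$ is bounded below. Hence $1+\beta=\tfrac{n+m}{m+\gamma}$ for a.e.\ $Y$; and $\Q^{n+m}$ is $\widetilde M$-semistable iff $\gamma=0$ iff $\beta=n/m$ iff a.e.\ $Y\in M$ fails to be VWMA iff $M$ is extremal.

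The only delicate point is bookkeeping in Step~1: one must obtain $\gamma_Y=(n-m\omega(Y))/(1+\omega(Y))$ as an exact identity rather than a one-sided estimate, which requires controlling both the trivial vectors ($q=0$) and the orbit excursions in which $\|q\|$ is not balanced with the depth $t$. One must also keep track of which notion of semistability is meant: the first assertion genuinely uses \emph{real} semistability, which is why Proposition~\ref{pol-comp} — and not merely the trivial Minkowski bound $\mathcal{G}_\mu\le\mathcal{G}^{\Q}$ — is needed, whereas the last clause, meaningful precisely because the $\QQ$-hypothesis yields $\mathcal{G}_\mu=\mathcal{G}^{\Q}$ via Theorem~\ref{parametrici}, refers to \emph{rational} semistability.
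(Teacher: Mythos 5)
Your proof is correct and follows exactly the route the paper intends (the paper gives no written proof, only the remark that the statement follows from Proposition~\ref{pol-comp} and Theorem~\ref{parametrici}): the Dani-type identity $\omega(Y)=\tfrac{n-\gamma_Y}{m+\gamma_Y}$, the real sandwich bound for the first assertion, and Theorem~\ref{parametrici} plus the identification $\beta(\phi(Y))=\omega(Y)$ for the formula $1+\beta=\tfrac{n+m}{\gamma+m}$. Your closing remark correctly isolates the two points the paper leaves implicit, namely that the first clause requires \emph{real} semistability (so that $\mathcal{G}^{\R}=0$ and the lower half of the sandwich applies) while the final equivalence concerns \emph{rational} semistability under the $\QQ$ hypothesis.
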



\subsection{Multiplicative approximation and strong extremality}\label{multi-strong}


Following the suggestion of Baker \cite[page 96]{baker_tnt} to study the multiplicative diophantine properties of the Mahler curve, Kleinbock and Margulis introduced the notion of \emph{strong extremality} for manifolds in $\R^n$.
This was later generalized in \cite{kleinbock-margulis-wang,kmb} to the context of diophantine approximation on matrices, but in that generalized setting the optimal criterion for strong extremality remained to be found \cite{kleinbock-margulis-wang,kmb}. The method of the present paper can be used to answer this problem. Below we apply Theorem~\ref{subspace-main} and give a complete solution when the Zariski closure of the manifold is  defined over $\QQ$.

\bigskip

Let $m$ and $n$ be two positive integers, and $M_{m,n}(\R)$ the space of $m\times n$ matrices with real entries.
Following Kleinbock and Margulis \cite{km}, we say that a matrix $Y\in M_{m,n}(\R)$ is \emph{very well multiplicatively approximable} (VWMA) if 
there exists $\eps>0$ such that the inequality
\[ \prod_{i=1}^m |Y_i\bq - p_i| \leq \prod_{j=1}^n |q_j|_+^{-1-\eps} \]
has infinitely many solutions $(\bp,\bq)\in\Z^m\times\Z^n$.
In the above inequality, $Y_i$ denotes the $i$-th row of $Y$, for $i=1,\dots,m$, and $|q|_+=\max(|q|,1)$.
More generally, we define as in \cite[\S1.4]{dfsu} the \emph{multiplicative diophantine exponent} of $Y\in M_{m,n}(\R)$ as
\[ \omega_\times(Y) =
\sup\left\{\omega>0;\ 
\prod_{i=1}^m |Y_i\bq - p_i| \leq \prod_{j=1}^n |q_j|_+^{-\omega}\ \mbox{for infinitely many}\ (\bp,\bq)\right\}. \]
With this definition, we see that a matrix $Y$ is VWMA if and only if $\omega_\times(Y)>1$.
Using the Borel-Cantelli lemma, it is not difficult to check that for the Lebesgue measure, almost every $Y$ in $M_{m,n}(\R)$ satisfies $\omega_\times(Y)=1$.
It is therefore natural to ask what other measures $\mu$ on $M_{m,n}(\R)$ satisfy this property. 

\smallskip

We now set up some notation to formulate our criterion for stong extremality.
Given a matrix $Y$ in $M_{m,n}(\R)$, with rows $Y_1,\dots,Y_m$, we let
\[ L_Y= \begin{pmatrix} -I & Y\\ 0 & I\end{pmatrix},\]
and denote by $L_i$, $i=1,\dots,m+n$ the linear forms on $\R^{m+n}$ given by the rows of the matrix $L_Y$:
\[ \left\{
\begin{array}{ll}
L_i(\bp,\bq) = Y_i\bq -p_i  & \mbox{for}\ i\in\{1,\dots,m\}\\
L_i(\bp,\bq) = q_{i-m} & \mbox{for}\ i\in\{m+1,\dots,m+n\}.
\end{array}
\right.\]
If $W$ is a linear subspace of $\R^{n+m}$, and $I$ a non-empty subset of $\{1,\dots,m+n\}$, we let
\[ s_{I,W} = \rk (L_i|_W)_{i\in I}. \]

\begin{definition}[Multiplicative pencils]
Let $I,J$ be proper subsets of $\{1,\dots,m+n\}$ such that $I\subset\{1,\dots,m\}\subset J$ and $r,s$ non-negative integers.
Given a subspace $W\leq\R^{n+m}$, we define a subvariety of endomorphisms $\cP_{I,J,r,s,W}\subset M_{m,n}(\R)$ by
\[ \cP_{I,J,r,s,W} = \{ Y\in M_{m,n}(\R) \,|\, s_{I,W}\leq r \ \mbox{and}\  s_{J,W}\leq s \}. \]
\end{definition}

To justify the relevance of this definition to our problem, we start by an easy proposition, which is a consequence of Minkowski's first theorem or Dirichlet's pigeonhole principle.

\begin{proposition}[Dirichlet's principle]
\label{dirichlet}
Fix $Y\in M_{m,n}(\R)$, and denote by $L_i$, $i=1,\dots,m+n$, the rows of the matrix $L_Y$.
Assume that $W$ is a $k$-dimensional rational subspace of $\R^{m+n}$
and that $\emptyset\neq I\subset\{1,\dots,m\}\subset J\subsetneq \{1,\dots,m+n\}$ are such that
\[ r=\rk(L_i|_W)_{i\in I} \quad\mbox{and}\quad s=\rk(L_i|_W)_{i\in J}.\]
Then,
\[ \omega_\times(Y) \geq \frac{(k-s)|I|}{r(n+m-|J|)}.\]
\end{proposition}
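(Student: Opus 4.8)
The plan is to run a Dirichlet-type box argument inside the rational subspace $W$, applying Minkowski's first convex body theorem to the lattice obtained by pushing $W\cap\Z^{m+n}$ through the linear forms $L_i$. Since $L_Y$ is invertible, the map $\Phi:W\to\R^{m+n}$, $\bx\mapsto(L_1(\bx),\dots,L_{m+n}(\bx))$, is injective, so $\Lambda:=\Phi(W\cap\Z^{m+n})$ is a full-rank lattice of some fixed covolume $c=c(Y,W)>0$ in the $k$-dimensional subspace $\Phi(W)$. The geometric heart of the matter is a three-step flag $W_2\subset W_1\subset\Phi(W)$, where $W_1:=\ker(\pi_I|_{\Phi(W)})$ and $W_2:=\ker(\pi_J|_{\Phi(W)})$, with $\pi_I,\pi_J$ the projections of $\R^{m+n}$ onto the coordinates indexed by $I$ and by $J$. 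The rank hypotheses say precisely that $\dim W_1=k-r$ and $\dim W_2=k-s$, and $W_2\subset W_1$ because $I\subset J$. Using injectivity of $\Phi$ one checks the transversality statements one will need: the coordinates outside $J$ restrict to an injection on $W_2$; the coordinates in $J\setminus I$ restrict to an injection on any complement of $W_2$ in $W_1$; and the coordinates in $I$ restrict to an injection on any complement of $W_1$ in $\Phi(W)$.

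Next, fix a large real parameter $Q$ and a parameter $T=T(Q)$ to be chosen, and consider the symmetric box $B(T,Q)\subset\R^{m+n}$ with side $T$ on the coordinates in $I$, side $1$ on the coordinates in $J\setminus I$, and side $Q$ on the coordinates outside $J$. Choosing a basis of $\Phi(W)$ adapted to the flag $W_2\subset W_1\subset\Phi(W)$ and rescaling by a constant $\kappa=\kappa(Y,W)>0$, the transversality statements above show that $B(T,Q)\cap\Phi(W)$ contains a parallelepiped of $k$-dimensional volume $\gg_{Y,W}T^{r}Q^{k-s}$, the $s-r$ directions "in $W_1/W_2$" contributing a harmless factor $1^{s-r}$. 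Hence, by Minkowski's first theorem, as soon as $T^{r}Q^{k-s}\gg_{Y,W}1$ — that is, for $T=c_0(Y,W)\,Q^{-(k-s)/r}$ — there is a nonzero $\bx=(\bp_Q,\bq_Q)\in W\cap\Z^{m+n}$ with $|L_i(\bx)|\le T$ for $i\in I$, $|L_i(\bx)|\le 1$ for $i\in J\setminus I$, and $|L_i(\bx)|\le Q$ for $i\notin J$. Since $I\subset\{1,\dots,m\}\subset J$, the first two bounds give $\prod_{i=1}^m|L_i(\bx)|\le T^{|I|}=c_0^{|I|}Q^{-|I|(k-s)/r}$; since every index outside $J$ is a $\bq$-coordinate, the third bound gives $\prod_{j=1}^n|q_j|_+\le Q^{\,n+m-|J|}$, hence $\prod_{j}|q_j|_+^{-\omega}\ge Q^{-\omega(n+m-|J|)}$. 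Comparing powers of $Q$, for every $\omega<\frac{(k-s)|I|}{r(n+m-|J|)}$ the inequality $\prod_{i=1}^m|L_i(\bx)|\le\prod_j|q_j|_+^{-\omega}$ holds once $Q$ is large; letting $Q\to\infty$ produces infinitely many such $\bx$ (if only finitely many distinct ones arise, then some single $\bx\neq0$ in $W\cap\Z^{m+n}$ satisfies $L_i(\bx)=0$ for all $i\in I$, and its integer multiples are infinitely many solutions with vanishing left-hand side, so $\omega_\times(Y)=+\infty$), whence $\omega_\times(Y)\ge\omega$. Letting $\omega$ tend to the bound concludes.

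The degenerate cases are harmless: if $r=0$ the $I$-forms vanish identically on $W$, so integer multiples of any nonzero vector of $W\cap\Z^{m+n}$ again give $\omega_\times(Y)=+\infty$; if $k=s$ the claimed bound is $0\le\omega_\times(Y)$, which holds since $\omega_\times(Y)\ge1$ by Dirichlet; and $J\subsetneq\{1,\dots,m+n\}$ ensures $n+m-|J|\ge1$. So one may assume $r\ge1$ and $k>s$. The one step that is a genuine (if routine) computation is the volume lower bound for $B(T,Q)\cap\Phi(W)$; this is exactly where the flag and the three injectivity statements are used, and it is the only place the hypotheses on $r$ and $s$ enter — the numbers $k-r$ and $k-s$ are precisely the counts of "free directions" of $\Phi(W)$ at scales $1$ and $Q$. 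Everything else is the standard pigeonhole mechanism behind Dirichlet's theorem, carried out in $W$ rather than in all of $\R^{m+n}$.
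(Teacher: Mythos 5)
Your proof is correct and follows essentially the same route as the paper's: both run Minkowski's first theorem on a symmetric convex body adapted to the flag determined by the ranks $r\leq s\leq k$, with $r$ directions shrunk to scale $Q^{-(k-s)/r}$, $s-r$ directions held at scale $1$, and $k-s$ directions expanded to scale $Q$, so that the volume stays bounded below. The only (cosmetic) difference is that the paper works directly in $W$ with a basis of the restricted forms $L_{i_1}|_W,\dots,L_{i_k}|_W$, whereas you push forward to $\Phi(W)$ and intersect a coordinate box with that subspace; the underlying computation is identical.
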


\begin{remark}
By convention, if $r=0$, the ratio is equal to $+\infty$.
Note that one can always take $W=\R^{m+n}$, $I=J=\{1,\dots,m\}$, and $r=s=m$, in which case the ratio is equal to $1$.
\end{remark}

\begin{proof}
If $r=0$, then one must have $Y_i\bq-p_i=0$ for some integer vector $(\bp,\bq)$ in $\Z^{m+n}$.
It is then clear that $\omega_\times(Y)=\infty$.
So we may assume that $r\neq 0$.
Since by definition $\omega_\times(Y)\geq 0$, we may also assume that $s<k$, otherwise there is nothing to prove.

Assuming that $Y\in \cP_{I,J,r,s,W}$, we shall prove that there exists a constant $C>0$ depending only on $Y$ and $W$ such that the inequality
\[ \prod_{i=1}^m |L_i(v)| \leq C \prod_{j=1}^n |L_{m+j}(v)|^{-\frac{(k-s)|I|}{r(n+m-|J|)}}\]
has infinitely many solutions $v$ in $W$. This will yield the desired lower bound on  $\omega_\times(Y)$.
Let $Q>0$ be a large parameter.
Pick $i_1,\dots,i_r$ in $I$ such that $L_{i_1}|_W,\dots,L_{i_r}|_W$ are linearly independent, pick $i_{r+1},\dots,i_s$ in $J$ such that $L_{i_1}|_W,\dots,L_{i_s}|_W$ are linearly independent, and complete with $i_{s+1},\dots,i_k$ such that $L_{i_1}|_W,\dots,L_{i_k}|_W$ are linearly independent.
The symmetric convex body in $W$ defined by
\[ \left\{
\begin{array}{ll}
|L_{i_\ell}(v)| \leq Q^{-(k-s)} & \mbox{for}\ 1\leq\ell\leq r\\
|L_{i_\ell}(v)| \ll 1 & \mbox{for}\ r<\ell\leq s\\
|L_{i_\ell}(v)| \leq Q^{r} & \mbox{for}\ s<\ell\leq k
\end{array}
\right.\]
has volume $\gg 1$ and therefore, by Minkowski's first theorem, it contains a non-zero point $v$ in $W\cap\Z^{n+m}$.
By our choice of the indices $i_\ell$, $1\leq \ell\leq k$, such a point satisfies
\[ \left\{
\begin{array}{ll}
|L_i(v)| \leq Q^{-(k-s)} & \mbox{for}\ i\in I\\
|L_i(v)| \ll 1 & \mbox{for}\ i\in J\setminus I\\
|L_i(v)| \leq Q^{r} & \mbox{for}\ i\not\in J
\end{array}
\right.\]
and therefore
\[ \prod_{i=1}^m |L_i(v)| \ll Q^{-|I|(k-s)}
\ll \prod_{j=1}^n |L_{m+j}(v)|^{-\frac{(k-s)|I|}{r(n+m-|J|)}}.\]
\end{proof}

Now, as in the introduction, let $M=\phi(U)$ be a connected analytic submanifold of $M_{m,n}(\R)$ endowed with the measure $\mu$ equal to the push-forward of the Lebesgue measure under the analytic map $\phi:U \to M_{m,n}(\R)$. When the Zariski closure of $M$ is defined over $\QQ$ -- for example when $\phi$ is given by a polynomial map with coefficients in $\QQ$ -- the above proposition actually provides a formula for $\omega_\times(Y)$, when $Y$ is a $\mu$-generic point of $M$. In other words:

\begin{theorem}[Formula for the multiplicative exponent]
\label{omegaformula}
Assume that the Zariski closure of $M$ is defined over $\QQ$. Then 
 for $\mu$-almost every $Y$ in $M$,
\[
\omega_\times(Y)
= \max_{\substack{M\subset\cP_{I,J,r,s,W}\\W\ \mathrm{rational}}} \frac{(\dim W-s)|I|}{r(m+n-|J|)}.\]
\end{theorem}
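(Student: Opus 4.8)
The plan is to prove the two inequalities separately. The lower bound
\[
\omega_\times(Y) \ge \max_{\substack{M\subset\cP_{I,J,r,s,W}\\ W\ \mathrm{rational}}} \frac{(\dim W-s)|I|}{r(m+n-|J|)}
\]
holds for \emph{every} $Y$ in $M$ and is exactly the content of Proposition \ref{dirichlet}: if $M\subset \cP_{I,J,r,s,W}$ then in particular $Y\in\cP_{I,J,r,s,W}$, so the proposition gives $\omega_\times(Y)\ge \tfrac{(\dim W-s)|I|}{r(m+n-|J|)}$, and we take the max over all admissible $(I,J,r,s,W)$. So all the work is in the reverse inequality, which should hold for $\mu$-almost every $Y$.

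For the upper bound I would argue by induction on $d=m+n$, in the spirit of the proof of Theorem \ref{extremalityexponent} and of the Sprindzuk conjecture in \S\ref{sprin}. Write $\beta$ for the right-hand side of the displayed formula (note $\beta\ge 1$ by the Remark following Proposition \ref{dirichlet}). Fix $\eps>0$; we must show that for $\mu$-a.e.\ $Y$ there are only finitely many $(\bp,\bq)$ with $\prod_{i=1}^m |L_i(\bp,\bq)| \le \prod_{j=1}^n |q_j|_+^{-\beta-\eps}$. Since $\beta\ge 1$, such a solution satisfies $\prod_{i=1}^{m+n}|L_i(v)| = \prod_{i=1}^m|L_i(v)|\cdot\prod_{j=1}^n|q_j| \le \|v\|^{-\eps'}$ for a suitable $\eps'>0$ once $\|v\|$ is large (using $|q_j|\le\|v\|$ and that at least one $|q_j|$ is comparable to $\|v\|$ up to the finitely many solutions with $\bq$ bounded, which we discard; one also needs that the rows $L_i$ span, which holds because $L_Y$ is invertible). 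The Pl\"ucker closure of $\widetilde M=\{L_Y: Y\in M\}$ is defined over $\Q$ — indeed its defining equations are linear combinations of minors of $L_Y$, which are polynomials in the entries of $Y$, so over $\QQ$ by hypothesis, and the block structure of $L_Y$ with two identity blocks forces rationality — so Theorem \ref{subspace-main} applies: there is a finite family of proper rational subspaces $W_1,\dots,W_r$ of $\Q^{m+n}$ such that for $\mu$-a.e.\ $Y$ all but finitely many solutions $v$ lie in $W_1\cup\dots\cup W_r$.

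It then remains to show that inside each fixed proper rational subspace $W=W_\ell$ there are only finitely many solutions. Here is where the bookkeeping of the indices $I,J$ enters and where I expect the main obstacle. Restricting the linear forms $L_i$ to $W$ and re-expressing $v\in W\cap\Z^{m+n}$ in an integral basis of $W\cap\Z^{m+n}$, one obtains a new multiplicative approximation problem; the point is to rewrite it so that the induction hypothesis applies with a strictly smaller ambient dimension and with a \emph{new} value $\beta'$ of the exponent which, by the inductive formula together with the definition of the $\cP_{I,J,r,s,W}$, is bounded by $\beta$ — so that any solution in $W$ with $\omega_\times$ forced to exceed $\beta$ would contradict the induction hypothesis applied to $W$. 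The delicate part is choosing the right analogues of $I$ and $J$ for the restricted problem: one must split the $m$ ``approximating'' forms and the $n$ ``denominator'' forms according to which of their restrictions to $W$ remain linearly independent, keep track of the ranks $s_{I,W}, s_{J,W}$, and verify that the pencils $\cP_{I',J',r',s',W'}$ containing the restricted manifold $M'$ pull back to pencils $\cP_{I,J,r,s,W''}$ (with $W''\le W$) containing $M$, with the ratio only decreasing. This is a purely combinatorial/linear-algebraic verification analogous to the passage from $L_i$ to $L_i'$ in the proof of Theorem \ref{extremalityexponent}, but more elaborate because of the two-index structure; granting it, the induction closes and $\mu$-a.e.\ $Y$ has $\omega_\times(Y)\le\beta$, completing the proof.
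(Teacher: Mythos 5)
Your lower bound is exactly the paper's: Proposition \ref{dirichlet} applied to each pencil containing $M$, valid for every $Y$. Your first reduction is also in the right spirit (since $\beta\ge 1$, solutions give $\prod_{i=1}^{m+n}|L_i(v)|\le\|v\|^{-\eps'}$ and Theorem \ref{subspace-main} applied to $\widetilde M=\{L_Y\}$ confines all but finitely many of them to finitely many proper rational subspaces; note only definability over $\QQ$ of $Zar(\widetilde M)$ is needed, not over $\Q$, and solutions with $\|\bp\|$ much larger than $\|\bq\|$ need a small separate argument before you may write $\prod_j|q_j|_+\gg\|v\|^{c}$). But the step you defer with ``granting it'' is not routine bookkeeping analogous to Theorem \ref{extremalityexponent}; it is the actual content of the theorem, and the induction you propose does not close as stated. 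When you restrict to a proper rational subspace $W$, the inequality $\prod_i|Y_i\bq-p_i|\le\prod_j|q_j|_+^{-\omega}$ does not become a problem of the same shape in lower dimension: the right-hand side is a product of the \emph{specific coordinates} $q_j$, each truncated at $1$, not a power of $\|v\|$; after passing to an integral basis of $W\cap\Z^{m+n}$ the forms $q_j|_W$ are in general neither coordinate forms nor independent, and the restricted family $\{(L_{i,Y}|_W)_i:\,Y\in M\}$ is not of the form $\{L_{Y'}:Y'\in M'\}$ for a submanifold of some $M_{m',n'}(\R)$. So the inductive hypothesis (the formula for $\omega_\times$ of a manifold of matrices) does not apply to the restricted problem, and relating pencils of the restricted data back to pencils $\cP_{I,J,r,s,W'}$ of $M$ with the exact ratio $\frac{(\dim W'-s)|I|}{r(m+n-|J|)}$ --- where $|I|$ and $m+n-|J|$ count \emph{original} indices --- is precisely what has to be proved. (This contrast explains why the analogous induction does work for the quasi-norm exponent in Theorem \ref{inhomogeneous}: a quasi-norm restricted to a subspace is again comparable to a quasi-norm, whereas the multiplicative right-hand side has no such stability under restriction.)

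The paper's proof of the upper bound uses no induction. Given $\omega$ such that a positive-measure set of $Y$ admits infinitely many solutions, it takes a rational subspace $W$ of \emph{minimal} dimension containing infinitely many of them, orders the indices via \eqref{order1}--\eqref{order2}, and attaches to each large solution $\bv\in W$ the exponent vector $(c_1,\dots,c_f,d_1,\dots,d_g)$ recording $|L_{i_\ell}(\bv)|\asymp\|\bv\|^{-c_\ell}$ and $|L_{j_\ell}(\bv)|\asymp\|\bv\|^{d_\ell}$ for a maximal independent family of forms restricted to $W$. The subspace theorem applied \emph{inside} $W$, together with minimality of $W$, forces these vectors into the explicit polytope $(P)$ (in particular $\sum d_\ell\ge\sum c_\ell$), and the defining inequality for $\omega$ makes an explicit linear functional nonnegative somewhere on $(P)$, hence at one of its finitely many vertices $p_{a,b}$; each vertex hands over the data $(I,J,r,s)$ of a pencil containing $M$ (rank conditions being Zariski-closed, positive measure suffices) with ratio at least $\omega$. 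If you want to salvage an inductive route you would first have to formulate and prove a more general statement stable under restriction (products of linear forms on both sides, as in Schmidt's treatment that the paper follows); as written, your proposal stops exactly where the proof begins.
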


We stated the result for analytic submanifolds for convenience, but it holds with the same proof for all good measures $\mu$ in the sense of \S \ref{mgm} provided the Zariski closure of the support of $\mu$ is defined over $\QQ$. 
We shall say that a multiplicative pencil $\cP_{I,J,r,s,W}$ is \emph{constraining} if it satisfies
\[ \frac{(\dim W-s)|I|}{r(m+n-|J|)} > 1.\]
Our criterion\footnote{We were informed by David Simmons \cite{simmons} that he and Tushar Das also obtained a similar  criterion for strong extremality of submanifolds defined over $\R$.} for strong extremality immediately follows from the above formula.

\begin{corollary}[Criterion for strong extremality]\label{criterion}
If $M$ is an analytic submanifold of $M_{m,n}(\R)$ whose Zariski closure is defined over $\QQ$.
Then $M$ is strongly extremal if and only if it is not contained in any rational constraining pencil.
\end{corollary}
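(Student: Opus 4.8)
The plan is to obtain the corollary as an immediate consequence of the formula for the multiplicative exponent (Theorem~\ref{omegaformula}), combined with the two elementary facts that $Y$ is VWMA if and only if $\omega_\times(Y)>1$ and that $\omega_\times(Y)\ge 1$ for \emph{every} $Y\in M_{m,n}(\R)$.

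First I would record what strong extremality means here: $M$ is strongly extremal precisely when $\mu$-almost every $Y\in M$ is not VWMA, i.e. when $\omega_\times(Y)\le 1$ for $\mu$-almost every $Y$. The lower bound $\omega_\times(Y)\ge 1$ is Dirichlet's principle: apply Proposition~\ref{dirichlet} to the trivial pencil given by $W=\R^{m+n}$, $I=J=\{1,\dots,m\}$, $r=s=m$, which is rational and has ratio $1$ (cf. the remark following Proposition~\ref{dirichlet}). Hence strong extremality of $M$ is equivalent to $\omega_\times(Y)=1$ for $\mu$-almost every $Y$.

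Next I would invoke Theorem~\ref{omegaformula}. Since the Zariski closure of $M$ is defined over $\QQ$, for $\mu$-almost every $Y$ one has
\[
\omega_\times(Y)=\max_{\substack{M\subset\cP_{I,J,r,s,W}\\ W\ \mathrm{rational}}}\frac{(\dim W-s)|I|}{r(m+n-|J|)}.
\]
The right-hand side depends only on $M$ (indeed only on its rational Zariski closure), and by the trivial pencil above it is always $\ge 1$; it attains the value $1$ exactly when no rational pencil $\cP_{I,J,r,s,W}$ containing $M$ has ratio strictly greater than $1$ — that is, exactly when $M$ lies in no rational constraining pencil. (If some such containing pencil has $r=0$, the convention makes the ratio $+\infty$, so $\omega_\times(Y)=+\infty$ for $\mu$-almost every $Y$ and $M$ is not strongly extremal, consistently with both sides.) Combining this with the previous paragraph yields the stated equivalence.

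Since the corollary is really just an unpacking of Theorem~\ref{omegaformula}, I do not expect any genuine obstacle in this last step; all the substance has already gone into that theorem, whose two halves come respectively from the subspace theorem for manifolds (Theorem~\ref{subspace-main}), giving the upper bound on $\omega_\times(Y)$ that powers the implication ``no constraining pencil $\Rightarrow$ strongly extremal'', and from Minkowski's first theorem via Proposition~\ref{dirichlet}, giving the matching lower bound (and hence the reverse implication, which in fact needs only Dirichlet's principle).
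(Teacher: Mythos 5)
Your proposal is correct and follows the paper's own route: the paper derives Corollary~\ref{criterion} immediately from the formula in Theorem~\ref{omegaformula}, exactly as you do, with your use of the trivial pencil $W=\R^{m+n}$, $I=J=\{1,\dots,m\}$, $r=s=m$ (ratio $1$) being precisely the remark following Proposition~\ref{dirichlet}. You merely spell out the routine unpacking (VWMA $\iff\omega_\times>1$, Dirichlet lower bound $\omega_\times\ge 1$) that the paper leaves implicit, so there is no substantive difference.
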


The proof of Theorem~\ref{omegaformula} is inspired by Schmidt's proof of an analogous result on products of linear forms \cite[\S12, page~242]{schmidt_da}.

\begin{proof}[Proof of Theorem~\ref{omegaformula}]
Let $\omega>0$ be such that for $Y$ in a set of positive measure in $M$, the inequality
\begin{equation}\label{mult}
\prod_{i=1}^m|Y_i\bq-p_i| \leq \prod_{j=1}^m|q_j|_+^{-\omega}
\end{equation}
has infinitely many solutions $(\bp,\bq)$ in $\Z^{m+n}$.
We want to show that $M$ is included in a pencil $\cP_{I,J,r,s,W}$ such that
\[ \frac{(\dim W-s)|I|}{r(m+n-|J|)} \geq \omega.\]
Let $W$ be a rational subspace of minimal dimension $k$ containing infinitely many solutions to \eqref{mult}.
If $k=1$, then there must exist $i\in\{1,\dots,m\}$ and $(\bp,\bq)\in\Z^{m+n}$ such that $Y_i\bq-p_i=0$, and one can take $I=\{i\}$, $J=\{1,\dots,m\}$, $r=0$, and $s=1$.
So we assume $k\geq 2$.
Reordering the indices if necessary, we may assume that $W$ contains infinitely many solutions to \eqref{mult} satisfying
\begin{equation}\label{order1}
0 < |Y_1\bq -p_1| \leq \dots \leq |Y_m\bq-p_m|
\end{equation}
and
\begin{equation}\label{order2}
|q_1|\leq \dots\leq |q_n|.
\end{equation}
Define $i_1<i_2<\dots<i_f\leq m$ inductively so that each $i_\ell$ is minimal such that $\rk(L_{i_1}|_W,\dots,L_{i_\ell}|_W)=\ell$, and then $m<j_1<\dots<j_g\leq m+n$ such that $j_\ell$ is minimal such that $\rk(L_{i_1}|_W,\dots,L_{i_f}|_W,L_{j_1}|_W,\dots,L_{j_\ell}|_W)=f+\ell$.
Note that in our notation, $f+g=k=\dim W$.

Given a large solution $\bv=(\bp,\bq)$ to \eqref{mult} in $W$, choose numbers $c_1,\dots, c_f$ such that
\[ |L_{i_\ell}(\bv)|\asymp \|\bv\|^{-c_\ell} \quad \mbox{for}\ \ell=1,\dots,f\]
and $d_1, \dots, d_g$ such that
\[ |L_{j_\ell}(\bv)|\asymp \|\bv\|^{d_\ell} \quad \mbox{for}\ \ell=1,\dots,g.\]
By \eqref{order1}, one has $c_1\geq\dots\geq c_f\geq 0$.
By \eqref{order2} and the fact that one always has $|L_{j_\ell}(\bv)|\ll\|\bv\|$, one finds
$0\leq d_1\leq\dots\leq d_g\leq 1$.
Moreover, by minimality of $W$, the subspace theorem applied in $W$ to the set of linear forms 
\[ L_{i_1}|_W,\dots,L_{i_f}|_W,L_{j_1}|_W,\dots,L_{j_g}|_W \]
shows that $-c_1+\dots -c_f+d_1+\dots+d_g\geq 0$.
In conclusion, one sees that the $k$-tuple $(c_1,\dots,c_f,d_1,\dots,d_g)$ belongs to the convex polytope $(P)$ defined by
\[ (P)\quad
\left\{
\begin{array}{l}
c_1\geq\dots\geq c_f\geq 0\\
0\leq d_1\leq\dots\leq d_g\leq 1\\
d_1+\dots+d_g\geq c_1+\dots+c_f.
\end{array}
\right.
\]
Now inequality \eqref{mult} implies
\[ c_1(i_2-i_1)+\dots+c_f(m-i_f) - \omega[d_1(j_1-m)+\dots+d_g(m+n-j_g)] \geq 0.\]
The linear map $f(\bc,\bd)=c_1(i_2-i_1)+\dots+c_f(m-i_f) - \omega[d_1(j_1-m)+\dots+d_g(m+n-j_g)]$ is non-negative on some point in the convex polytope $(P)$, so it must be non-negative on one of its vertices.
The polytope (P) has $gf$ vertices, given by
\[ p_{a,b}:\left\{
\begin{array}{l}
0=d_1=\dots=d_a<d_{a+1}=\dots=d_g=1\\
\frac{g-a}{b}=c_1=\dots=c_b>c_{b+1}=\dots=c_f=0
\end{array}
\right.
\quad\mbox{where}\
\left\{
\begin{array}{l}
a=0,\dots,g-1\\
b=1,\dots,f
\end{array}
\right.\]
Choose $(a,b)$ such that $f(p_{a,b})\geq 0$, and let
\[\left\{
\begin{array}{l}
I=\{1,\dots,i_b\}\\
J=\{1,\dots,j_a\}\\
r=b\\
s=f+a.
\end{array}
\right.\]
We then obtain
\[ 0 \leq f(p_{a,b})
 = \frac{g-a}{b}|I|-\omega(m+n-|J|),\]
whence
\[ \frac{(k-s)|I|}{r(m+n-|J|)} \geq \omega.\]
\end{proof}

\begin{remark}Strong extremality also relates to semistability in a similar way as in Proposition \ref{extvssemi}. We leave it to the reader to check that in the setting of Theorem \ref{omegaformula}, $M$ is strongly extremal if and only if $\Q^{m+n}$ is semistable for $\{L_Y, Y \in M\}$ with respect to all unimodular flows $a_t=(e^{tA_1},\ldots,e^{tA_{n+m}})$ with $A_1 \ge \ldots \ge A_m \ge 0 \ge A_{m+1} \ge \ldots \ge A_{m+n}$. See \cite{kleinbock-margulis-wang} where the relevance of  this family of flows for strong extremality was uncovered.
\end{remark}

\subsection{Roth's theorem for nilpotent Lie groups}
Diophantine approximation on nilpotent Lie groups was studied in \cite{abrs,abrs2}. In this paragraph we continue this study and  derive an analogue of Roth's theorem \cite{roth} in this context.  In doing so we extend Theorem \ref{extremalityexponent} to approximation with quasi-norms. 

\subsubsection*{Diophantine approximation in nilpotent Lie groups}

In this paragraph $G$ will denote a simply connected nilpotent Lie group, endowed with a left-invariant riemannian metric. We identify it with its Lie algebra $\g$ under the exponential map, so that its Haar measure is given by the Lebesgue measure on $\g$.
For a finite symmetric subset $S$ of $G$ and $\Gamma=\langle S \rangle$ the subgroup it generates, 
the diophantine exponent of $\Gamma$ in $G$ is
\[ \beta(\Gamma)=\inf \{ \beta \ |\
\exists c>0:\,\forall n\in\N^*,\,\forall x\in S^n\setminus\{1\},\,
d(x,1)\geq c n^{-\beta}\},\] 
where $S^n$ denotes the set of elements of $\Gamma$ that can be written as a product of at most $n$ elements of $S$.
Because $\Gamma$ is nilpotent, this definition does not depend on the choice of $S$.
The following theorem was proved in \cite[Theorem~7.4]{abrs2}.

\begin{theorem}[Existence of the exponent]
Let $G$ be a connected and simply connected real nilpotent Lie group endowed with a left-invariant riemannian metric $d$.
For each $k\geq 1$, there is $\beta_k\in[0,\infty]$ such that for almost every $k$-tuple $\bg=(g_1,\ldots,g_k) \in G^k$ with respect to the Haar measure, the subgroup $\Gamma_\bg$ generated by $g_1,\ldots,g_k$ satisfies
\[ \beta(\Gamma_{\bg}) = \beta_k.\]
\end{theorem}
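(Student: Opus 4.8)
The plan is to reduce the statement to a genericity fact about ranks of a polynomial family of linear maps, combined with a formula expressing $\beta(\Gamma_{\bg})$ as a maximum over a finite family of (weighted) pencils, in the spirit of Theorem~\ref{extremalityexponent}. Fix the nilpotency step $s$ of $G$ and let $\mathfrak{f}=\mathfrak{f}_{k,s}=\bigoplus_{j=1}^{s}\mathfrak{f}^{(j)}$ be the free $s$-step nilpotent Lie algebra on generators $X_1,\dots,X_k$, graded by bracket length. For $\bg=(g_1,\dots,g_k)\in G^k$ there is a unique Lie algebra homomorphism $\pi_{\bg}\colon\mathfrak{f}\to\g$ with $\pi_{\bg}(X_i)=\log g_i$, and $\Gamma_{\bg}$ is the subgroup generated by the $g_i$ inside $\exp\bigl(\pi_{\bg}(\mathfrak{f})\bigr)$. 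First I would establish the word-metric estimate: after choosing coordinates of the second kind adapted to an $\N$-grading of $\g$, one has $d(\exp v,1)\asymp\max_{j}\|v_j\|^{1/j}$ for $v=\sum_j v_j$ with $v_j$ the weight-$j$ component, while the ball $S^n$ of $\Gamma_{\bg}$ is, up to bi-Lipschitz distortion independent of $n$, the image under $\pi_{\bg}$ of the homogeneous box $\{\lambda\in\Lambda : \|\lambda_j\|\le n^{j}\ \forall j\}$, where $\Lambda=\bigoplus_j\Lambda_j\subset\mathfrak{f}$ is the integer lattice spanned by the iterated brackets of the $X_i$. Consequently $\beta(\Gamma_{\bg})$ is the supremum of the $\beta$ for which there are infinitely many $\lambda\in\Lambda$ with $\|\lambda_j\|\le n^{j}$ and the weight-$j$ component of $\pi_{\bg}(\lambda)$ of norm $\ll n^{-\beta j}$ — a weighted, quasi-norm Diophantine approximation problem attached to the single linear map $\pi_{\bg}$.

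The second step turns this into a finite combinatorial formula. By the Dirichlet/Minkowski lower bound on one side and a Borel--Cantelli (or subspace-theorem) argument on the other — precisely the quasi-norm extension of Theorem~\ref{extremalityexponent} referred to in this paragraph — this exponent equals
\[
\beta(\Gamma_{\bg}) \;=\; \max_{W}\ \Phi_W\bigl(\dim\pi_{\bg}(W)\bigr),
\]
the maximum being over the finitely many \emph{homogeneous} rational subspaces $W\le\mathfrak{f}_{\Q}$ (a routine reduction shows homogeneous subspaces suffice, each non-homogeneous constraint being refined by a homogeneous one), and $\Phi_W$ an explicit decreasing function of one integer variable determined by the fixed weight data and by $\dim W$. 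The exact shape of $\Phi_W$ is irrelevant here; what matters is that, for each fixed $W$ in this finite list, $\beta(\Gamma_{\bg})$ depends on $\bg$ only through the integer $\dim\pi_{\bg}(W)$.

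The third and final step is genericity of rank. The assignment $\bg\mapsto\pi_{\bg}$ is polynomial in the coordinates of $\log g_1,\dots,\log g_k$, since iterated brackets are polynomial maps; hence for each fixed $W$ the function $\bg\mapsto\dim\pi_{\bg}(W)$ is lower semicontinuous in the Zariski topology and attains its maximal value $r_W^{\max}$ on a Zariski-open dense, hence Haar-conull, subset $\Omega_W\subset G^k$. Intersecting the finitely many $\Omega_W$ yields a conull set $\Omega$ on which $\dim\pi_{\bg}(W)=r_W^{\max}$ for every $W$ in the list simultaneously, so for $\bg\in\Omega$ the displayed formula gives $\beta(\Gamma_{\bg})=\max_{W}\Phi_W(r_W^{\max})=:\beta_k$, a value depending only on $G$ and $k$. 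If some term of the maximum is $+\infty$ — which happens exactly when $\pi_{\bg}$ is non-injective for generic $\bg$, i.e. when $\g$ is too small to contain a copy of $\mathfrak{f}$ — one simply sets $\beta_k=+\infty$, which the statement permits.

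I expect the first step to be the main obstacle: proving the two-sided word-metric estimate $d(\exp v,1)\asymp\max_j\|v_j\|^{1/j}$ together with the comparison of $S^n$ with $\pi_{\bg}\bigl(\{\lambda:\|\lambda_j\|\le n^j\}\bigr)$ uniformly in $n$. The delicate point is that the target group $\exp(\pi_{\bg}(\mathfrak{f}))$ need not be graded, so one must fix a Mal'cev basis of $\g$ adapted to its lower central series, control the Baker--Campbell--Hausdorff corrections relating the grading of $\mathfrak{f}$ to the filtration of $\g$, and check that the implied constants can be taken uniform over a neighbourhood of a generic $\bg$. This is the technical heart of \cite{abrs,abrs2}; once it is in place, the Diophantine bookkeeping of the last two steps is routine.
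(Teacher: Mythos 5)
This theorem is not proved in the present paper; it is quoted from \cite[Theorem~7.4]{abrs2}, where it is established by a zero--one law: the reduction via Proposition \ref{wordbracket} turns $\beta(\Gamma_{\bg})$ into a (quasi-norm) diophantine exponent of the analytic family $\bg\mapsto x_{\bg}\in\Hom(V,E)$, and the almost-sure constancy of that exponent follows from quantitative non-divergence alone -- the inheritance principle in the style of Theorem \ref{rate} -- with no arithmetic hypothesis and, crucially, without identifying the value $\beta_k$. Your first step (the word-metric comparison and the reduction to a weighted approximation problem for the single linear map $\pi_{\bg}$) matches that reduction. The divergence, and the gap, is in your second step.

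You derive almost-sure constancy from an exact formula $\beta(\Gamma_{\bg})=\max_W\Phi_W(\dim\pi_{\bg}(W))$, whose upper-bound half is the subspace-theorem statement (the quasi-norm analogue of Theorem \ref{extremalityexponent}, i.e.\ Theorem \ref{inhomogeneous}). That result requires the Zariski closure of the manifold $\{x_{\bg}\}$ to be defined over $\QQ$, which in this application amounts to $\g$ having algebraic structure constants. The theorem you are asked to prove assumes only that $G$ is a real simply connected nilpotent Lie group; for transcendental structure constants the formula is not available, and Proposition \ref{pol-comp} (the sandwich theorem) shows the almost-sure exponent can a priori sit strictly between the real and rational pencil bounds, so you cannot conclude that it is given by the rational-pencil maximum. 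Thus your argument proves the existence of $\beta_k$ only for $\g$ defined over $\QQ$ (indeed the paper presents exactly this formula route as Theorem \ref{rothnilpotent}, the \emph{Roth-type} refinement, under that hypothesis), not the general existence statement. Your third step (generic maximality of $\dim\pi_{\bg}(W)$ on a Zariski-open conull set) is fine as far as it goes, but it cannot substitute for the missing upper bound: without the formula, $\beta(\Gamma_{\bg})$ could depend on $\bg$ through finer diophantine data than the ranks $\dim\pi_{\bg}(W)$. To repair the proof in full generality you should replace step two by the measure-theoretic zero--one law: Borel--Cantelli together with the $(C,\alpha)$-good/non-divergence estimates shows that the liminf exponent of $a_tx_{\bg}\Z^d$-type orbits is almost surely equal to its supremum over the Zariski closure, hence constant, which is all the statement claims.
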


We also showed in \cite{abrs2} that when $G$ is rational, i.e. when its Lie algebra $\g$ admits a basis with rational structure constants, then $\beta_k$ can be explicitly computed and in particular is rational. In fact $\beta_k=F(k)$ for some rational function $F \in \Q(X)$ when $k$ is large enough.
Using the subspace theorem for manifolds, Theorem \ref{subspace-main}, we can now show the following.

\begin{theorem}[Roth's theorem for nilpotent groups]
\label{rothnilpotent}
Let $G$ be a connected simply connected nilpotent Lie group and $k$ a positive integer.
Assume that the Lie algebra $\g$ of $G$ admits a basis with structure constants in a number field $K$.
Then $$\beta_k\in\Q.$$ Moreover, for every $k$-tuple $\bg \in \g(\overline{\Q})^k$ we have $\beta(\Gamma_\bg) \in \Q$. Furthermore, there is $C=C(\dim \g)>0$ and a countable union $\mathcal{U}$ of proper algebraic subsets of $\g^k$  defined over $K$ and of degree at most $C$, such that
\[ \beta(\Gamma_{\bg}) = \beta_k\] for every $\bg \in \g^k(\overline{\Q}) \setminus \mathcal{U}$.
\end{theorem}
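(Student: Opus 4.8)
The plan is to reduce the statement to a diophantine approximation problem for the linear ``evaluation'' map attached to $\bg$, to which a quasi-norm refinement of Theorem~\ref{subspace-main} applies, and then to run the induction from the proof of Theorem~\ref{extremalityexponent}. \emph{Step 1 (reduction to a linear map).} Let $s$ be the nilpotency class of $\g$ and let $\mathfrak{f}=\mathfrak{f}_{k,s}=\bigoplus_{j=1}^s\mathfrak{f}_j$ be the free $s$-step nilpotent Lie algebra on generators $X_1,\dots,X_k$, graded by degree, with its canonical lattice $\mathfrak{f}(\Z)$ (a Hall basis) and the quasi-norm $\|v\|_\ast:=\max_{1\le j\le s}\|v_j\|^{1/j}$ for $v=\sum_j v_j$, $v_j\in\mathfrak{f}_j$. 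Identifying $G$ with $\g$ via $\exp$, a tuple $\bg=(g_1,\dots,g_k)\in\g^k$ determines the Lie algebra morphism $\pi_\bg\colon\mathfrak{f}\to\g$ with $\pi_\bg(X_i)=g_i$; in the chosen bases the entries of $\pi_\bg$ are polynomials in the coordinates of $\bg$ with coefficients in $K$, of degree $\le s$, so $\bg\mapsto\pi_\bg$ is a polynomial map defined over $K$ and the Zariski closure $M$ of its image in $\Hom(\mathfrak{f},\g)$ — together with its Pl\"ucker closure — is defined over $K\subset\QQ$. Using the Bass--Guivarc'h description of the growth of $S^n$ \cite{bassformula,guivarchformula}, one has a comparison between $S^n$ and $\pi_\bg(\{v\in\mathfrak{f}(\Z):\|v\|_\ast\le n\})$, giving, as in \cite[\S7]{abrs2}, $\beta(\Gamma_\bg)=\beta_\ast(\pi_\bg)$, where
\[
\beta_\ast(\pi):=\inf\{\beta:\ \exists c>0,\ \forall v\in\mathfrak{f}(\Z)\text{ with }\pi(v)\neq 0,\ \|\pi(v)\|\ge c\,\|v\|_\ast^{-\beta}\}.
\]

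\emph{Step 2 (a quasi-norm subspace theorem and the generic formula).} For a weight vector $\bar w=(w_1,\dots,w_N)\in\N^N$ and the quasi-norm $\|\bx\|_{\bar w}=\max_i|\bx_i|^{1/w_i}$ on $\R^N$, I would first establish the quasi-norm analogue of Theorem~\ref{subspace-main}: for a good measure $\mu$ on $\GL_N(\R)$ with Pl\"ucker closure over $\QQ$ and every $\eps>0$, the solutions $\bx\in\Z^N$ of $\prod_i|L_i(\bx)|\le\|\bx\|_{\bar w}^{-\eps}$ lie, for $\mu$-almost every $L$, in a finite union of proper rational subspaces (drawn from the finite family of Lemma~\ref{values}) up to finitely many exceptions. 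The proof of Theorem~\ref{subspace-main} in \S\ref{proof-sub} goes through verbatim with $\|\bx\|$ replaced by $\|\bx\|_{\bar w}$: it only uses that $\log\|\bx\|_{\bar w}\to\infty$ (equivalently $\|\bx\|\to\infty$) and that $\log|L_i(\bx)|\le(\max_i w_i)\log\|\bx\|_{\bar w}+O(1)$, so the weights are absorbed into the constant $D$ and hence into the finite net $P$ of unimodular diagonal semigroups, and Corollary~\ref{paraweak} is invoked as before. Feeding this into the induction of the proof of Theorem~\ref{extremalityexponent}, with the $N=\dim\mathfrak{f}$ linear forms on $\mathfrak{f}$ being the $m$ components of $\pi_\bg(v)$ (for the generic rank $m$) together with $N-m$ coordinate forms chosen independent of them and of least total weight, yields: for $\mu$-almost every $\pi\in M$, $\beta_\ast(\pi)$ equals a fixed value $\beta_k$, a finite maximum over rational subspaces $W\le\mathfrak{f}$ with $M$ contained in the associated weighted pencil of a ratio built from the \emph{weighted dimension} $\dim_\ast W:=\sum_j j\cdot\dim(\mathrm{gr}_j W)$ (for the induced filtration $W\cap\bigoplus_{j'\ge j}\mathfrak{f}_{j'}$) and from $\dim\pi(W)$. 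Since $s$, the weights and these dimensions are positive integers, $\beta_k\in\Q$. The induction closes within the class of integer-weighted spaces because a rational $W\le\mathfrak{f}$, with a basis adapted to its induced filtration, again carries a quasi-norm with integer weights and $\pi_\bg$ restricts compatibly.

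\emph{Step 3 (arbitrary algebraic points and the exceptional set).} The single-point case of the quasi-norm subspace theorem holds \emph{unconditionally} at any algebraic point (it is Schmidt's theorem for that configuration of linear forms), so the same induction applied to the single morphism $\pi_\bg$, $\bg\in\g^k(\QQ)$, expresses $\beta(\Gamma_\bg)=\beta_\ast(\pi_\bg)$ as a finite maximum of ratios of positive integers determined by the Harder--Narasimhan data of $\pi_\bg$; in particular $\beta(\Gamma_\bg)\in\Q$, and it equals $\beta_k$ exactly when that data is generic. As in the first case of the proof of Theorem~\ref{parametrici}, the data of $\pi_0\in M$ is generic as soon as $\tau_{\pi_0}(V)=\tau_M(V)$ for all rational $V\le\mathfrak{f}$; by \eqref{taulimit2} the locus in $M$ where this fails for a fixed $V$ is a proper closed subvariety over $\Q$, and pulling back by the $K$-morphism $\bg\mapsto\pi_\bg$ and taking the union over the countably many rational $V$ gives a countable union $\mathcal{U}$ of proper closed subvarieties of $\g^k$, defined over $K$ and — since these conditions are cut out by minors of bounded size of matrices of iterated brackets of the $g_i$ — of degree at most some $C=C(\dim\g)$. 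For $\bg\in\g^k(\QQ)\setminus\mathcal{U}$ one gets $\beta(\Gamma_\bg)=\beta_k$; and since $M$ is irreducible and $\mu$ analytic, $\mathcal{U}$ is $\mu$-null, which is consistent with the known almost-sure statement and identifies $\beta_k$ as its value (Lemma~\ref{countable} also guarantees $\g^k(\QQ)\setminus\mathcal{U}\neq\emptyset$).

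\emph{Main obstacle.} The delicate point is the quasi-norm bookkeeping in Step~2: defining the weighted pencils and the weighted dimension so that the induction of Theorem~\ref{extremalityexponent} truly closes on the exceptional subspaces (choosing, at each stage, coordinate forms of least total weight and checking that a rational subspace inherits an integer-weighted quasi-norm under which $\pi_\bg$ restricts correctly), and pinning down the finite net of diagonal semigroups for which Corollary~\ref{paraweak} applies. This amounts to reproducing the combinatorial core of \cite{abrs2} over $\QQ$ instead of $\Q$, together with the passage from ordinary norms to quasi-norms.
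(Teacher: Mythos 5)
Your proposal follows essentially the same route as the paper: reduce to the evaluation map $\bg\mapsto x_\bg\in\Hom(V,E)$ on a free nilpotent Lie algebra, which is polynomial over $K$ so that its image has Zariski (and Pl\"ucker) closure over $\QQ$, and then invoke a weighted/quasi-norm exponent formula proved by induction on $\dim V$ from Theorem \ref{subspace-main} (this is exactly the paper's Theorem \ref{inhomogeneous}), with Lemma \ref{countable} and the bounded-degree equations cutting out the non-generic locus giving $\mathcal{U}$. The only cosmetic differences are that the paper works with the relatively free algebra $\cF_{k,\g,\Q}$ so that Proposition \ref{wordbracket} applies verbatim, and that, instead of proving a separate quasi-norm version of the subspace theorem, it observes inside the proof of Theorem \ref{inhomogeneous} that the weighted system of inequalities already implies the ordinary product inequality $\prod_i|L_{i,x}(v)|\le\|v\|^{-\eps'}$, to which Theorem \ref{subspace-main} applies directly.
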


The case when $k=2$ and $G=(\R,+)$ is exactly Roth's theorem. In this case $\beta_2=1$ and $\mathcal{U}$ is the family of lines in $\R^2$ with rational slopes.

The basic idea for the proof of Theorem~\ref{rothnilpotent}, developed in \cite{abrs2}, is to reduce the problem to a question of diophantine approximation on submanifolds.
For that, we introduce the free Lie algebra $\cF_k$ over $k$ generators $\sx_1,\dots,\sx_k$.
Inside $\cF_k$, the ideal of laws $\cL_{k,\g}$ on the Lie algebra $\g$ of $G$ is the set of elements $\sr$ in $\cF_k$ such that $\sr(X_1,\dots,X_k)=0$ for every $X_1,\dots,X_k$ in $\g$, and the ideal of rational laws $\cL_{k,\g,\Q}$ is the real span of the intersection of $\cL_{k,\g}$ with $\cF_k(\Q)$, the natural $\Q$-structure on $\cF_k$.
The Lie algebra $\cF_{k,\g,\Q}=\cF_k/\cL_{k,\g,\Q}$ has a graded structure
\[ \cF_{k,\g,\Q} = \bigoplus_{i=1}^s \cF_{k,\g,\Q}^{[i]},\]
where $\cF_{k,\g,\Q}^{[i]}$ is the homogeneous part of $\cF_{k,\g,\Q}$ consisting of brackets of degree $i$.
For $\sr=\sum \sr_i$ with $\sr_i\in \cF_{k,\g,\Q}^{[i]}$, we let \begin{equation}\label{quasinorm}|\sr|:=\max_{i=1,\ldots,s} \|\sr_i\|^{\frac{1}{i}},\end{equation} where $\|\cdot\|$ is a fixed norm on $\cF_{k,\g,\Q}$.
Endowed with this quasi-norm, the Lie algebra $\cF_{k,\g,\Q}$ is quasi-isometric to the group of word maps on $G$, endowed with the word metric \cite[Proposition~7.2]{abrs2}.
This yields the following characterization for the above diophantine exponent $\beta(\Gamma_{\bg})$ proved in \cite[Proposition~7.3]{abrs2}. We say that $\Gamma_{\bg}$ is relatively free in $G$ if the only relations satisfied by $\bg$ are the laws of $G$. This holds for all $\bg$ outside a countable union of proper algebraic subvarieties of bounded degree defined over $\Q$, and in particular for Lebesgue almost every $\bg \in G^k$.

\begin{proposition}
\label{wordbracket}
Let $G$ be a simply connected nilpotent Lie group, with Lie algebra $\g$.
Let $\bg=(e^{X_1},\dots,e^{X_k})$ be a $k$-tuple in $G$ such that $\Gamma_{\bg}$ is relatively free in $G$.
Then the exponent $\beta(\Gamma_{\bg})$ defined above is also the infimum of all $\beta>0$ such that
\begin{equation}\label{liediophantine}
\|\sr(X_1,\dots,X_k)\| \geq |\sr|^{-\beta}
\end{equation}
holds for all but finitely many $\sr\in\cF_{k,\g,\Q}(\Z)$.
\end{proposition}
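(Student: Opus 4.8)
The plan is to translate the group-theoretic quantity $\beta(\Gamma_{\bg})$ into a statement about the lattice $\cF_{k,\g,\Q}(\Z)$ inside the graded Lie algebra $\cF_{k,\g,\Q}$, using two inputs: the relative freeness of $\bg$, and the quasi-isometry of \cite[Proposition~7.2]{abrs2}. First I would set up the basic dictionary. Relative freeness says precisely that the homomorphism $F_k\to\Gamma_{\bg}$, $w\mapsto w(\bg)$, has kernel equal to the set of laws of $G$; it therefore identifies $\Gamma_{\bg}$, together with the generating set $S=\{e^{\pm X_i}\}$, with the group of word maps on $G$ endowed with its canonical generators, and under this identification $S^n\setminus\{1\}$ is exactly the punctured word-metric ball of radius $n$. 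On the Lie side, the Baker--Campbell--Hausdorff formula (finite, since $G$ is nilpotent) attaches to each word $w$ a Lie element $\sr_w$ with $w(e^{X_1},\dots,e^{X_k})=\exp\big(\sr_w(X_1,\dots,X_k)\big)$; as $w$ varies over $F_k$ the $\sr_w$ range over a lattice $\Lambda$ commensurable with $\cF_{k,\g,\Q}(\Z)$. Finally I would invoke relative freeness a second time, together with the (setting-dependent) identification of $\cF_{k,\g,\Q}$ with the Lie algebra of the Malcev completion of the group of word maps, to conclude that $\sr\mapsto\sr(X_1,\dots,X_k)$ is injective on $\cF_{k,\g,\Q}(\Z)$; in particular no nonzero $\sr$ there annihilates $(X_1,\dots,X_k)$.

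Next I would align the three relevant ``sizes'' of an element $x=w(\bg)$ lying near the identity: the Riemannian distance $d(x,1)$, the Euclidean norm $\|\sr_w(X_1,\dots,X_k)\|$ of $\log x$, and the quasi-norm $|\sr_w|=\max_i\|(\sr_w)_i\|^{1/i}$. Since the metric on $G$ is left-invariant Riemannian and $\exp$ is a diffeomorphism, $d(x,1)\asymp\|\log x\|$ for $x$ in a fixed neighbourhood of $1$; and in the definition of $\beta(\Gamma_{\bg})$ only such $x$ matter, because an $x$ at distance $\geq 1$ from $1$ satisfies the required lower bound for free. For the second comparison, \cite[Proposition~7.2]{abrs2} asserts exactly that the word length of $w$ in the group of word maps is comparable, up to fixed multiplicative constants, to $|\sr_w|$. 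Since the minimal word length representing a given element is attained, this lets me translate ``$x\in S^n\setminus\{1\}$'' into ``$\sr_w\in\Lambda\setminus\{0\}$ with $|\sr_w|\lesssim n$'' (and, conversely, every nonzero $\sr\in\Lambda$ with $|\sr|\lesssim n$ arises from some $x\in S^n\setminus\{1\}$), after adjusting $n$ by a constant factor.

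Putting these together, $\beta(\Gamma_{\bg})$ equals the infimum of those $\beta>0$ for which there is $c>0$ with $\|\sr(X_1,\dots,X_k)\|\geq c\,|\sr|^{-\beta}$ for every nonzero $\sr\in\Lambda$. Two routine reductions then finish the argument. First, replacing the lattice $\Lambda$ by the commensurable lattice $\cF_{k,\g,\Q}(\Z)$ changes $\|\cdot\|$ and $|\cdot|$ only by bounded factors and hence does not affect this infimum. Second, the condition ``$\exists c>0$ with $\|\sr(X)\|\geq c|\sr|^{-\beta}$ for all nonzero $\sr$'' has, as $\beta$ varies, the same infimum as the condition ``$\|\sr(X)\|\geq|\sr|^{-\beta}$ for all but finitely many $\sr$'': enlarging $\beta$ by any $\delta>0$ absorbs the constant $c$ once $|\sr|$ is large, only finitely many $\sr$ have $|\sr|$ below a given bound (discreteness of the lattice) and each of those has $\sr(X)\neq0$ by the injectivity established above, while conversely the finitely many exceptions can be absorbed into a constant. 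This gives precisely the characterization in the statement.

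The one substantive ingredient is \cite[Proposition~7.2]{abrs2}, the Guivarc'h--Bass-type comparison between the word metric on the group of word maps and the weighted quasi-norm $|\sr|=\max_i\|\sr_i\|^{1/i}$ on $\cF_{k,\g,\Q}$ — this is where polynomial volume growth and the grading of the free nilpotent Lie algebra genuinely enter, and I would simply cite it. Apart from that, the main points requiring care are: the verification that relative freeness forces $\sr\mapsto\sr(X_1,\dots,X_k)$ to be injective on $\cF_{k,\g,\Q}(\Z)$ (equivalently, that $\cF_{k,\g,\Q}$ is the Lie algebra of the Malcev completion of the group of word maps in the setting considered), and the bookkeeping of multiplicative constants through the passages between commensurable lattices and between the ``$\exists c>0$'' and the ``all but finitely many'' formulations.
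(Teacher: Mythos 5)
Your argument is correct and follows exactly the route the paper indicates: the paper does not prove Proposition \ref{wordbracket} itself but cites \cite[Proposition~7.3]{abrs2}, pointing to the quasi-isometry of \cite[Proposition~7.2]{abrs2} between the quasi-normed Lie algebra $\cF_{k,\g,\Q}$ and the word-metric group of word maps as the key input, which is precisely the substantive ingredient you invoke. The remaining steps you supply (BCH dictionary, the local comparison $d(x,1)\asymp\|\log x\|$, injectivity of evaluation on $\cF_{k,\g,\Q}(\Z)$ via relative freeness and the fact that rational laws lie in $\cL_{k,\g,\Q}$, and the equivalence of the ``$\exists c$'' and ``all but finitely many'' formulations) are the standard bookkeeping and are handled correctly.
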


In the next paragraph, we show how Theorem \ref{subspace-main} yields a formula for diophantine exponents defined with quasi-norms, as in the above proposition.

\subsubsection*{Weighted diophantine approximation}

This paragraph generalizes the results of \S\ref{subsec:ede} to diophantine approximation with quasi-norms, also called \emph{weighted diophantine approximation} (see e.g. \cite[\S 1.5]{k-m-w} and references therein).
Again, $E$ and $V$ are two finite-dimensional real vector spaces, and $\Delta$ is a lattice in $V$, defining a rational structure.

We fix a norm $\|\cdot\|$ on $E$.
On $V$, we measure the size of vectors using a quasi-norm $|\cdot|$ given by the formula
\begin{equation}\label{quasi-def} |v| = \max_{1\leq i\leq d} |\langle v,u_i^*\rangle|^{\frac{1}{\alpha_i}},\end{equation}
where $\alpha=(\alpha_1,\dots,\alpha_d)$ is a $d$-tuple of positive real numbers and $(u_i^*)_{1\leq i\leq d}$ a basis of $V^*$.
Given $x$ in $\Hom(V,E)$, define the diophantine exponent of $x$ by
\[ \beta_\alpha(x) = \inf\{\beta>0 \ |\ \exists c>0:\ \forall v\in\Delta,\
\|xv\| \geq c|v|^{-\beta}\}.\]


\begin{definition}[Growth rate for a quasi-norm]
Let $W$ be a linear subspace in $V$.
The \emph{growth rate} of balls in $W$ for the quasi-norm $|\cdot|$ is given by
\[ \alpha(W) = \lim_{R\to\infty} \frac{1}{\log R}\log\Vol\{v\in W\ |\ |v|\leq R\}.\]
\end{definition}

\begin{remark} It is not hard to see that this limit exists and equals $\sum_{i \in I_W} \alpha_i$ for a certain subset $I_W \subset [1,d]$. Indeed the restriction of $|\cdot|$ to $W$ is itself comparable up to multiplicative constants to a quasi-norm with exponents $\alpha_i$, $i \in I_W$, where $I_W=\{i_1,\ldots,i_k\}$ is defined as follows. Choose $i_1$ minimal such that the restriction of $u_{i_1}^*$ to $W$ is non-zero, then inductively  choose $i_j$ minimal such that the linear forms $u_{i_1}^*|_{W},\dots,u_{i_j}^*|_{W}$ are linearly independent.
\end{remark}

%

This gives us a lower bound for the diophantine exponent, using a standard Dirichlet type argument:

\begin{lemma}
Let $V$ and $|\cdot|$ be as above.
For any $x$ in $\Hom(V,E)$,
\[ \beta_\alpha(x) \geq \max\{ \frac{\alpha(W\cap\ker x)}{\dim W-\dim W\cap\ker x} \ ;\ 
W\leq V \ \mbox{rational subspace}\}.\]
\end{lemma}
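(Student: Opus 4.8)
The plan is to prove the asserted lower bound one rational subspace at a time, by a Dirichlet--Minkowski argument that produces, for each large parameter $Q$, an integer point $v\in W$ with $\|xv\|$ small compared with $|v|^{-\alpha(W_0)/r}$; here and below $W_0:=W\cap\ker x$, $r:=\dim W-\dim W_0$ and $k:=\dim W$. First I would reduce to the main case. Reindexing the basis $(u_i^*)$ I may assume $\alpha_1\le\dots\le\alpha_d$. If $\ker x\cap\Delta\neq\{0\}$ then $\beta_\alpha(x)=+\infty$ and there is nothing to prove (a rational line inside $\ker x$ makes the right-hand side $+\infty$ as well, with the usual convention for the zero denominator), so I assume $\ker x\cap\Delta=\{0\}$; then every rational $W\neq\{0\}$ satisfies $W\not\subseteq\ker x$, i.e.\ $r\ge 1$, and the case $\dim W_0=0$ is trivial because the ratio is then $0\le\beta_\alpha(x)$. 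So fix a rational $W$ with $1\le\dim W_0<k$.

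The key preliminary is a coordinate system on $W$ adapted simultaneously to the quasi-norm and to the map $x$. Let $I_W$ and $I_{W_0}$ be the index sets attached to $W$ and $W_0$ in the Remark; restricting a linear dependence from $W$ to $W_0$ shows $I_{W_0}\subseteq I_W$, and the Remark gives both $\alpha(W_0)=\sum_{i\in I_{W_0}}\alpha_i$ and, for $v\in W$, $|v|\asymp\max_{i\in I_W}|u_i^*(v)|^{1/\alpha_i}$. For $i\in I_W\setminus I_{W_0}$ let $\tilde u_i^*$ be $u_i^*|_W$ minus the unique combination of $\{u_j^*|_W:\ j\in I_{W_0},\ j<i\}$ for which the difference vanishes on $W_0$; then $\{\tilde u_i^*:\ i\in I_W\setminus I_{W_0}\}\cup\{u_j^*|_W:\ j\in I_{W_0}\}$ is a basis of $W^*$, and since the $r$ forms $\tilde u_i^*$ span the space of all linear forms on $W$ vanishing on $W_0$, we obtain $\|xv\|\asymp\max_{i\in I_W\setminus I_{W_0}}|\tilde u_i^*(v)|$ for $v\in W$ (both sides being norms on $W/W_0\cong x(W)$ pulled back to $W$).

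Now I would carry out the Dirichlet step. Given $Q$ large, set $\delta:=Q^{-\alpha(W_0)/r}$ and, for a constant $C_1=C_1(W,\Delta)$ to be chosen large, consider the symmetric convex body
\[ \mathcal B_Q=\bigl\{v\in W:\ |\tilde u_i^*(v)|\le C_1\delta\ \ (i\in I_W\setminus I_{W_0}),\quad |u_j^*(v)|\le C_1Q^{\alpha_j}\ \ (j\in I_{W_0})\bigr\}. \]
In the above coordinates $\mathcal B_Q$ is a box whose volume relative to the lattice $W\cap\Delta$ is $\asymp_W C_1^{k}\,\delta^{r}\,Q^{\sum_{j\in I_{W_0}}\alpha_j}=C_1^{k}\,\delta^rQ^{\alpha(W_0)}=C_1^{k}$; enlarging $C_1$ if necessary, Minkowski's first theorem yields a nonzero $v=v_Q\in\mathcal B_Q\cap\Delta$. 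Since $W_0\cap\Delta\subseteq\ker x\cap\Delta=\{0\}$, the point $v$ lies outside $W_0$, so $xv\neq 0$ and $\|xv\|\asymp\max_{i}|\tilde u_i^*(v)|\le C_1\delta$. To bound $|v|$ I use the defining constraints: for $j\in I_{W_0}$, $|u_j^*(v)|^{1/\alpha_j}\ll Q$; for $i\in I_W\setminus I_{W_0}$, writing $u_i^*(v)=\tilde u_i^*(v)+\sum_{j\in I_{W_0},\,j<i}c_{ij}u_j^*(v)$ and using that each such $j$ satisfies $\alpha_j\le\alpha_i$ (this is where the ordering of the $\alpha_i$ enters), one gets $|u_i^*(v)|\ll\delta+Q^{\alpha_i}\ll Q^{\alpha_i}$, hence $|u_i^*(v)|^{1/\alpha_i}\ll Q$; by the Remark, $|v|\ll_W Q$. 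Therefore $\|xv\|\le C_1\delta=C_1Q^{-\alpha(W_0)/r}\ll|v|^{-\alpha(W_0)/r}$.

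Finally, as $Q\to\infty$ we have $\|xv_Q\|\to 0$ with $xv_Q\neq 0$, so the points $v_Q$ are eventually pairwise distinct and $|v_Q|\to\infty$; consequently, for every $\beta<\alpha(W_0)/r$ and every $c>0$ the inequality $\|xv\|<c|v|^{-\beta}$ has infinitely many solutions $v\in\Delta$, i.e.\ $\beta_\alpha(x)\ge\alpha(W_0)/r$. Taking the maximum over all rational $W$ yields the lemma. The delicate point, and the main obstacle, is precisely the estimate $|v_Q|\ll Q$ of the previous paragraph: a naive choice of complementary coordinates would lose a factor $\min\bigl(1,\ \min_{i\in I_W\setminus I_{W_0}}\alpha_i\,/\,\max_{j\in I_{W_0}}\alpha_j\bigr)$, and it is only because the corrections defining $\tilde u_i^*$ involve indices $j<i$ — combined with $\alpha_1\le\dots\le\alpha_d$ — that the $i$-th quasi-norm coordinate of $v_Q$ stays $\ll Q^{\alpha_i}$, which is what produces the sharp exponent.
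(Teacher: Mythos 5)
Your proof is correct and is essentially the paper's argument: the paper's three-line proof is the same Dirichlet-type volume count, pigeonholing the $x$-images of the $\asymp R^{\alpha(W)}$ lattice points of the quasi-ball $\{v\in W:|v|\le R\}$ into a region of volume $O(R^{\alpha(W)-\alpha(W\cap\ker x)})$ in $x(W)$, whereas you run the equivalent Minkowski-first-theorem version on an adapted box in $W$. Your write-up supplies the details the paper omits (in particular the adapted forms $\tilde u_i^*$ and the role of the ordering $\alpha_1\le\dots\le\alpha_d$ in the bound $|v_Q|\ll Q$), but the underlying mechanism is the same.
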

\begin{proof}
Let $R>0$ be some large parameter.
The number of points $v$ in $W\cap\Delta$ such that $|v|\leq R$ is roughly $R^{\alpha(W)}$, and their images in $x(W)\simeq W/(W\cap\ker x)$ lie in a distorted ball of volume $O(R^{\alpha(W)-\alpha(W\cap\ker x)})$.
Comparing volumes, we find that balls of radius $\eps$ around those points cannot be disjoint if $\eps\gg R^{-\frac{\alpha(W\cap\ker x)}{\dim W-\dim W\cap\ker x}}$.
\end{proof}

It turns out that this lower bound is in fact attained almost everywhere on analytic submanifolds of $\Hom(V,E)$ whose Zariski closure is defined over $\QQ$.
This is the content of the next theorem. As earlier, we endow $M:=\phi(U)$ with the push-forward $\mu$ of the Lebesgue measure on the connected open subset $U \subset \R^d$ via the analytic map $\phi:U  \to \Hom(V,E)$.

\begin{theorem}[Diophantine exponent for quasi-norms]
\label{inhomogeneous}
Assume $V$ and $|\cdot|$ are as above, and that the Zariski closure $Zar(M)$ of $M$ is defined over $\overline{\Q}$.
Then, for almost every $x$ in $M$,
\begin{equation}\label{formulaforexp} \beta_\alpha(x) = \max\left\{\min_{y\in M}  \frac{\alpha(W\cap\ker y)}{\dim W-\dim W\cap\ker y} \ ;\ 
W\leq V \ \mbox{rational subspace}\right\}.\end{equation}
This equality is also true for every $\QQ$-point of $Zar(M)$ outside a union of proper algebraic subsets of $M$ defined over $\Q$ and of bounded degree.
\end{theorem}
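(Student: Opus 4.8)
The lower bound $\beta_\alpha(x)\ge\max_{W}\frac{\alpha(W\cap\ker x)}{\dim W-\dim(W\cap\ker x)}$ is precisely the Lemma stated just before the theorem, and it holds for every $x$; so the entire content of the theorem is the reverse inequality for $\mu$-almost every $x$, together with the sharper statement for algebraic points. The plan is to follow the proof of Theorem \ref{extremalityexponent} almost verbatim, the only genuinely new ingredient being the way the quasi-norm is fed into Theorem \ref{subspace-main}. First I would make the routine reductions: reorder the weights so that $\alpha_1\le\cdots\le\alpha_d$; replace $E$ by a coordinate projection so that $x$ has constant rank $m=\dim E$ on an open dense subset of $U$ (if the generic rank is smaller, in particular if $x\equiv0$, the argument is unchanged and the answer is $+\infty$, in accordance with the convention that a vanishing denominator in \eqref{formulaforexp} means $+\infty$); and, as is automatic in the applications, in particular in the graded setting of Proposition \ref{wordbracket}, assume the basis $(u_i^*)$ of $V^*$ is rational with respect to $\Delta$. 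Then I argue by induction on $d=\dim V$, the case $d=1$ being immediate since $x(\Delta)$ is then a discrete subgroup of $E$.

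For the inductive step put $\beta:=\max_{W\ \mathrm{rational}}\min_{y\in M}\frac{\alpha(W\cap\ker y)}{\dim W-\dim(W\cap\ker y)}$ and fix $\eps>0$. For $\mu$-almost every $x$ the subspace $\ker x$ is transverse to the flag $\ker u_1^*\supset\ker u_1^*\cap\ker u_2^*\supset\cdots$, so the greedy index set computing $\alpha(\ker x)$ is $\{1,\dots,d-m\}$ and $\alpha(\ker x)=\alpha_1+\cdots+\alpha_{d-m}$; moreover the $W=V$ term of the formula gives $m\beta\ge\alpha(\ker x)$, the inner minimum over $y$ being attained at generic $x$ because $\sum_{j\in I_{W'}}\alpha_j\ge\alpha_1+\cdots+\alpha_{\dim W'}$ for every subspace $W'$ (sorted weights). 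Now let $L(x)\in\GL_d(\R)$ be the matrix whose first $m$ rows are the rows $x_1,\dots,x_m$ of $x$ and whose last $d-m$ rows are $u_1^*,\dots,u_{d-m}^*$; generically these are linearly independent, and since the bottom $d-m$ rows are fixed and rational, the Pl\"ucker (indeed Zariski) closure of the image of $M$ under $x\mapsto L(x)$ is defined over $\QQ$. If $v\in\Delta\setminus\{0\}$ satisfies $\|x(v)\|\le|v|^{-\beta-\eps}$ with $t:=\log|v|$ large, then $|x_i(v)|\ll e^{-(\beta+\eps)t}$ for $i\le m$ and $|u_j^*(v)|\le e^{\alpha_jt}$ for $j\le d-m$, whence
\[ \prod_{i=1}^d|L_i(v)|\ll e^{(\alpha_1+\cdots+\alpha_{d-m}-m(\beta+\eps))t}\ll e^{-m\eps t}.\]
Consequently $\prod_{i=1}^d|L_i(v)|\le\|v\|^{-\eps'}$ for a suitable $\eps'>0$ and all $v$ with $\|v\|$ large, using $m\beta\ge\alpha(\ker x)$ and the polynomial comparability of $|v|$ and $\|v\|$ on $\Delta$.

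Theorem \ref{subspace-main} applied to this submanifold then gives, for $\mu$-almost every $x$, a finite family of proper rational subspaces $V_1,\dots,V_r\le V$, independent of $\eps$, outside of which $\|x(v)\|\le|v|^{-\beta-\eps}$ has only finitely many integer solutions; it remains to bound the solutions inside each $V_\ell$. Restricting $x$ to $V_\ell$ produces an analytic family in $\Hom(V_\ell,E)$ whose Zariski closure is still defined over $\QQ$; the quasi-norm restricts on $V_\ell$ to a quasi-norm with weights $\alpha_j$, $j\in I_{V_\ell}$ (the Remark following the definition of the growth rate); and the corresponding exponent $\beta_{V_\ell}:=\max_{W\le V_\ell}\min_y\frac{\alpha(W\cap\ker y)}{\dim W-\dim(W\cap\ker y)}$ satisfies $\beta_{V_\ell}\le\beta$, since fewer subspaces $W$ occur in the outer maximum. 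By the induction hypothesis $\beta_\alpha(x|_{V_\ell})=\beta_{V_\ell}$ for $\mu$-almost every $x$, so there are only finitely many solutions inside $V_\ell$; as there are finitely many $\ell$, there are finitely many in all, hence $\beta_\alpha(x)\le\beta$, which with the lower bound proves the formula for $\mu$-almost every $x$. For the statement about $\QQ$-points one repeats the induction with Theorem \ref{subspace-main} applied to the Dirac mass at the algebraic point $L(x)$: the conclusion $\beta_\alpha(x)=\beta$ then holds as soon as $x$ avoids the countably many proper subvarieties of $Zar(M)$, defined over $\Q$ and of degree bounded in terms of $\dim V$, on which some rank $\dim x(W)$ ($W$ rational) drops below its generic value or $x$ fails to realize one of the inner minima $\min_y$.

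The step I expect to demand the most care is the propagation of the inequality $m\beta\ge\alpha(\ker x)$ — more precisely of its analogue ``(generic growth rate of the kernel inside the current subspace) $\le$ (codimension of that kernel) $\times$ (current $\beta$)'' — through all levels of the induction; this is where the presence of weights, as opposed to the uniform case of Theorem \ref{extremalityexponent}, genuinely has to be tracked, and it rests on keeping the weights sorted and on the monotonicity $\sum_{j\in I_{W'}}\alpha_j\ge\alpha_1+\cdots+\alpha_{\dim W'}$. Everything else is a direct transcription of the unweighted argument.
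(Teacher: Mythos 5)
Your overall strategy is the paper's: pass to a manifold of matrices in $\GL_d(\R)$ by appending fixed covectors to the rows of $x$, apply Theorem \ref{subspace-main}, and induct on $\dim V$ through the exceptional subspaces, restricting the quasi-norm and checking $\beta_{V_\ell}\le\beta$. The gap is at the very first step, and it is genuine: you claim that for $\mu$-almost every $x\in M$ the kernel $\ker x$ is transverse to the flag $\ker u_1^*\supset\ker u_1^*\cap\ker u_2^*\supset\cdots$, so that its greedy index set is $\{1,\dots,d-m\}$, $\alpha(\ker x)=\alpha_1+\cdots+\alpha_{d-m}$, and the matrix $L(x)$ with bottom rows $u_1^*,\dots,u_{d-m}^*$ is invertible. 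Genericity of $x$ \emph{inside $M$} gives no such transversality: $M$ is a fixed submanifold of $\Hom(V,E)$ and may lie entirely in a degenerate locus, e.g. where $\ker x\subset\ker u_1^*$; then $u_1^*|_{\ker x},\dots,u_{d-m}^*|_{\ker x}$ are dependent for \emph{every} $x\in M$, $L(x)$ is singular for every $x$, and Theorem \ref{subspace-main} cannot be applied at all (note that the numerical inequality you need, $m\beta\ge\alpha_1+\cdots+\alpha_{d-m}$, is true regardless; it is the construction of the $\GL_d$-valued manifold that collapses). This degeneracy is not exotic — it is the normal situation in the application the theorem is designed for (Proposition \ref{wordbracket}, Theorem \ref{rothnilpotent}): already for the Heisenberg algebra with $k=3$ generators, $\ker x_{\bg}$ is $3$-dimensional but its projection to the degree-one part of $V$ is only a line, so its greedy index set is not an initial segment.

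The repair is exactly the bookkeeping the paper's proof carries out: define the greedy index set $I_x=\{i_{1,x},\dots,i_{n,x}\}$ of $\ker x$ with respect to $(u_i^*)$; it is constant, equal to some $I$ (in general not $\{1,\dots,n\}$), on a dense Zariski-open subset of $Zar(M)$; build $L_x$ with first $m$ rows cutting out $\ker x$ and bottom rows $u_i^*$, $i\in I$, which is then invertible generically on $M$; and bound $|u_i^*(v)|\le|v|^{\alpha_i}$ for $i\in I$. The product estimate then needs $m\beta\ge\sum_{i\in I}\alpha_i$, i.e. $m\beta\ge$ the \emph{generic} value of $\alpha(\ker y)$, and this is precisely what the $W=V$ term of \eqref{formulaforexp} provides, since the $\min_{y\in M}$ there is attained at generic $y$ (the generic index set is element-wise smallest and the weights are sorted); your cruder lower bound $\alpha_1+\cdots+\alpha_{d-m}$ plays no role once the correct rows are used. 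With this correction the rest of your argument (exceptional subspaces, restricted quasi-norm with weights $\alpha_j$, $j\in I_{V_\ell}$, the inequality $\beta_{V_\ell}\le\beta$, and the Dirac-mass/Lemma \ref{countable} treatment of $\QQ$-points) coincides with the paper's proof; also note that your added hypothesis that the $u_i^*$ be rational with respect to $\Delta$ is neither assumed in the statement nor needed.
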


\begin{remark}
Note that $\alpha$ can take only finitely many values, so the maximum and minimum in the above formula are indeed attained.
\end{remark}
\begin{remark}
The theorem implies that $\beta_\alpha(x)\geq \beta$ for almost every $x$ on $M$ if and only if $\beta_\alpha(y)\geq\beta$ for every $y$ on $M$, if and only if there exists a rational subspace $W$ such that $M$ is included in the (closed) algebraic subset of all $y\in\Hom(V,E)$ such that $\alpha(W\cap\ker y) \ge \beta(\dim W-\dim W\cap\ker y)$.
\end{remark}

\begin{proof}[Proof of Theorem~\ref{inhomogeneous}]
Since we can always reorder the $u_i^*$, $1\leq i\leq d$, we may assume without loss of generality that $0<\alpha_1\leq\dots\leq\alpha_d$.
Then, for $x$ in $\Hom(V,E)$, let
\[ m=\dim x(V) \quad\mbox{and}\quad n=\dim\ker x.\]
and define inductively a subset $I_x=\{i_{1,x},\dots,i_{n,x}\}$ in $\{1,\dots,d\}$ by
\[
\begin{array}{ll}
\mbox{for}\ j=1, & i_{1,x} = \min\{ i \ |\ u_i^*|_{\ker x}\neq 0\}\\
\forall j\geq 1, & i_{j+1,x} = \min\{ i \ |\ (u_{i_{1,x}}^*|_{\ker x},\dots,u_{i_{j,x}}^*|_{\ker x},u_{i}^*|_{\ker x}) \ \mbox{is linearly independent}\}.
\end{array}
\]
Now suppose $L_x$ is an element of $\GL(d,\R)$ with rows $L_{j,x}$ satisfying 
\begin{enumerate}
\item $\ker x=\bigcap_{1\leq j\leq m} \ker L_{j,x}$
\item $\forall j\in\{1,\dots,n\},\ L_{m+j,x}=u_{i_{j,x}}^*$
\end{enumerate}
Then $\beta_\alpha(x)$ is the minimal $\beta>0$ such that, for all $\eps>0$, for $Q$ large enough, the set of inequalities
\begin{equation}\label{inhomineq}
\left\{
\begin{array}{ll}
|L_{j,x}(v)| \leq Q^{-\beta-\eps} & \mbox{for}\ 1\leq i\leq m\\
|L_{m+j,x}(v)| \leq Q^{\alpha_{i_j}} & \mbox{for}\ 1\leq j\leq n
\end{array}
\right.
\end{equation}
has no non-zero solution $v$ in $\Delta$.

Let  $\beta$ be the right-hand side of $(\ref{formulaforexp})$. 
We want to show that for $\mu$-almost every $x$, for all $\eps>0$ and for $Q>0$ large enough, the inequalities (\ref{inhomineq}) have no non-zero solution in $\Delta$.
We prove this by induction on $d$, using our subspace theorem (i.e. Theorem \ref{subspace-main}).\\
\underline{$d=1$}: There is not much to prove in this case, since $\ker x$ is equal $\{0\}$ or $V$, for $\mu$-almost every $x$.\\
\underline{$d-1\to d$}: Assume the result already proven for $d-1$. 
The map $x\mapsto I_x$ is constant on a dense Zariski open subset of $Zar(M)$; we denote by $I=\{i_1,\dots,i_n\}$ its value on this dense open subset, and by $\widetilde{M}$ the set of all $L_x$, for $x$ in that subset and in $M$.
Since $\beta\geq\frac{\alpha(V)}{m}=\frac{\sum_{j=1}^n\alpha_{i_j}}{m}$, the solutions $v$ to (\ref{inhomineq}) satisfy
\[ \prod_{i=1}^d |L_{i,x}(v)| \leq Q^{-m\beta-m\eps+\sum_{j=1}^n\alpha_{i_j}}
\leq \|v\|^{-\eps'},\]
and therefore, by Theorem \ref{subspace-main}, there exist proper subspaces $V_1,\dots,V_\ell$ in $\Q^d$ containing all solutions except a finite number.
All we need to check is that on in each $V_i$, there can be only finitely many solutions. But this follows from the induction hypothesis, applied to $V'=V_i$, $\Delta'=\Delta\cap V_i$, $|\cdot| = |\cdot|_{V'}$ (the restriction of a quasi-norm to a subspace is comparable to a quasi-norm \cite[\S 4.1]{abrs2}), and to the submanifold $\{x|_{V_i} \ ;\ x\in \widetilde{M}\}$.
We leave it to the reader to check from the formula defining the exponent that $\beta'\leq\beta$.
\end{proof}

We can now easily derive our theorem about nilpotent groups.

\begin{proof}[Proof of Theorem~\ref{rothnilpotent}]
Let $V=\cF_{k,\g,\Q}$, $\Delta=\cF_{k,\g,\Q}(\Z)$ and $E=\g$.
For each $k$-tuple $\bg=(e^{X_1},\dots,e^{X_k})$ in $G^k$, we obtain an element $x_{\bg}$ in $\Hom(V,E)$ given by evaluation at $(X_1,\dots,X_k)$:
\[ x_\bg(\sr) = \sr(X_1,\dots,X_k).\]
The map $G^k \to \Hom(V,E)$, $\bg \mapsto x_\bg$ is a polynomial map with coefficients in $K$. In particular the Zariski closure of its image is defined over $\QQ$. When $\Gamma_{\bg}$ is relatively free,  Proposition~\ref{wordbracket} shows that
$\beta(\Gamma_{\bg}) = \beta(x_\bg)$,
where $\beta(x_{\bg})$ is the diophantine exponent with respect to the quasi-norm $|\cdot|$ on $V$ defined in $(\ref{quasinorm})$.
By Theorem~\ref{inhomogeneous},
\[ \beta(x_{\bg}) =  \max\left\{ \min_{\bh\in G^k}\frac{\alpha(W\cap\ker x_{\bh})}{\dim W-\dim W\cap\ker x_{\bh}} \ ;\ 
W\leq V \ \mbox{rational subspace}\right\}\] for Lebesgue almost every $\bg \in G^k$.
This shows that $\beta_k$ is well defined, and since $\alpha$ takes rational values, this formula shows that $\beta_k\in\Q$.
For each rational $W \leq V$, the set of all $\bh \in G^k$ such that $\alpha(W \cap \ker x_\bh) > \beta_k \dim (W/(W \cap \ker x_\bh)$ is a proper algebraic subset of $G^k$ defined by equations of bounded degree with coefficients in $K$. Their union forms a proper subset of $G^k$ by Lemma \ref{countable} and Theorem~\ref{inhomogeneous} implies that  $\beta(\Gamma_{\bg})=\beta_k$ for every $\bg$ outside this union. 
\end{proof}

\bigskip

\noindent \emph{Acknowledgements:} We are grateful to Barak Weiss for stimulating conversations about Harder-Narasimhan filtrations and to Mohammed Bardestani, Vesselin Dimitrov, Elon Lindenstrauss and P\'eter Varj\'u for interesting discussions around the subspace theorem.

\bibliographystyle{plain}
\bibliography{bibliography}

\end{document}